\documentclass[12pt,reqno]{amsart}
\usepackage{moreverb}
\usepackage{color}
\usepackage{amsmath,amssymb,amsfonts}
\usepackage{graphicx}
\usepackage{subfigure}
\usepackage{fullpage}

\definecolor{darkblue}{rgb}{0,0,.8}
\definecolor{darkred}{rgb}{0.8,0,0}

\usepackage{hyperref}
\hypersetup{
  pdftex=true,
  colorlinks=true,
  allcolors = darkred,
  linkcolor=darkblue, 
  menucolor=darkblue,
  urlcolor = darkblue,
  frenchlinks=false,
  pdfborder={0 0 0},
  naturalnames=false,
  hypertexnames=false,
  breaklinks
}


\usepackage[capitalize]{cleveref}
\crefname{section}{section}{sections}

\Crefname{figure}{Figure}{Figures}

\crefformat{equation}{\textup{#2(#1)#3}}
\crefrangeformat{equation}{\textup{#3(#1)#4--#5(#2)#6}}
\crefmultiformat{equation}{\textup{#2(#1)#3}}{--\textup{#2(#1)#3}}
{, \textup{#2(#1)#3}}{, and \textup{#2(#1)#3}}
\crefrangemultiformat{equation}{\textup{#3(#1)#4--#5(#2)#6}}%
{ and \textup{#3(#1)#4--#5(#2)#6}}{, \textup{#3(#1)#4--#5(#2)#6}}{, and \textup{#3(#1)#4--#5(#2)#6}}

\Crefformat{equation}{#2Equation~\textup{(#1)}#3}
\Crefrangeformat{equation}{Equations~\textup{#3(#1)#4--#5(#2)#6}}
\Crefmultiformat{equation}{Equations~\textup{#2(#1)#3}}{ and \textup{#2(#1)#3}}
{, \textup{#2(#1)#3}}{, and \textup{#2(#1)#3}}
\Crefrangemultiformat{equation}{Equations~\textup{#3(#1)#4--#5(#2)#6}}%
{ and \textup{#3(#1)#4--#5(#2)#6}}{, \textup{#3(#1)#4--#5(#2)#6}}{, and \textup{#3(#1)#4--#5(#2)#6}}

\crefdefaultlabelformat{#2\textup{#1}#3}

\newtheorem{theorem}{Theorem}
\newtheorem{lemma}[theorem]{Lemma}

\newtheorem{problem}[theorem]{Problem}
\newtheorem{corollary}[theorem]{Corollary}

\newtheorem{remark}[theorem]{Remark}

\def\subsection#1
{
 \bigskip

 \refstepcounter{subsection}
 {\bf\arabic{section}.\arabic{subsection}.~#1.~}
}

\def\subsubsection#1
{
 \bigskip

 \refstepcounter{subsubsection}
 {\bf\arabic{section}.\arabic{subsection}.\arabic{subsubsection}.~#1.~}
}

\usepackage{pgfplots}
\usepackage{pgfplotstable}
\usetikzlibrary{calc}
\usetikzlibrary{plotmarks}
\usetikzlibrary{arrows.meta}
\usepgfplotslibrary{patchplots}
\pgfplotsset{plot coordinates/math parser=false}
\newlength\figureheight
\newlength\figurewidth 

\newcommand{\logLogSlopeTriangle}[6]
{

    \pgfplotsextra
    {
        \pgfkeysgetvalue{/pgfplots/xmin}{\xmin}
        \pgfkeysgetvalue{/pgfplots/xmax}{\xmax}
        \pgfkeysgetvalue{/pgfplots/ymin}{\ymin}
        \pgfkeysgetvalue{/pgfplots/ymax}{\ymax}

        \pgfmathsetmacro{\xArel}{#1}
        \pgfmathsetmacro{\yArel}{#3}
        \pgfmathsetmacro{\xBrel}{#1-#2}
        \pgfmathsetmacro{\yBrel}{\yArel}
        \pgfmathsetmacro{\xCrel}{\xArel}

        \pgfmathsetmacro{\lnxB}{\xmin*(1-(#1-#2))+\xmax*(#1-#2)} 
        \pgfmathsetmacro{\lnxA}{\xmin*(1-#1)+\xmax*#1} 
        \pgfmathsetmacro{\lnyA}{\ymin*(1-#3)+\ymax*#3} 
        \pgfmathsetmacro{\lnyC}{\lnyA+#4*(\lnxA-\lnxB)}
        \pgfmathsetmacro{\yCrel}{\lnyC-\ymin)/(\ymax-\ymin)} 

        \coordinate (A) at (rel axis cs:\xArel,\yArel);
        \coordinate (B) at (rel axis cs:\xBrel,\yBrel);
        \coordinate (C) at (rel axis cs:\xCrel,\yCrel);

        \draw[#5]   (A)-- node[pos=0.5,anchor=south] {#6 1}
                    (B)-- 
                    (C)-- node[pos=0.5,anchor=west] {#6 #4}
                    cycle;
    }
}
\newcommand{\logLogSlopeTrianglelow}[6]
{

    \pgfplotsextra
    {
        \pgfkeysgetvalue{/pgfplots/xmin}{\xmin}
        \pgfkeysgetvalue{/pgfplots/xmax}{\xmax}
        \pgfkeysgetvalue{/pgfplots/ymin}{\ymin}
        \pgfkeysgetvalue{/pgfplots/ymax}{\ymax}

        \pgfmathsetmacro{\xArel}{#1}
        \pgfmathsetmacro{\yArel}{#3}
        \pgfmathsetmacro{\xBrel}{#1-#2}
        \pgfmathsetmacro{\yBrel}{\yArel}
        \pgfmathsetmacro{\xCrel}{\xBrel}

        \pgfmathsetmacro{\lnxB}{\xmin*(1-(#1-#2))+\xmax*(#1-#2)} 
        \pgfmathsetmacro{\lnxA}{\xmin*(1-#1)+\xmax*#1} 
        \pgfmathsetmacro{\lnyA}{\ymin*(1-#3)+\ymax*#3} 
        \pgfmathsetmacro{\lnyC}{\lnyA-#4*(\lnxA-\lnxB)}
        \pgfmathsetmacro{\yCrel}{\lnyC-\ymin)/(\ymax-\ymin)} 

        \coordinate (A) at (rel axis cs:\xArel,\yArel);
        \coordinate (B) at (rel axis cs:\xBrel,\yBrel);
        \coordinate (C) at (rel axis cs:\xCrel,\yCrel);

        \draw[#5]   (A)-- node[pos=0.5,anchor=north] {#6 $1$}
                    (B)-- node[pos=0.5,anchor=east] {#6 $#4$}
                    (C)-- 
                    cycle;
    }
}

\title[Non-symmetric time dependent coupling]
       {On the non-symmetric coupling method for parabolic-elliptic interface problems} 

\author{Herbert Egger}
\address{TU Darmstadt, Department of Mathematics, Dolivostra\ss{}e 15, 64293 Darmstadt, Germany}
\email{egger@mathematik.tu-darmstadt.de}

\author{Christoph Erath}
\address{TU Darmstadt, Department of Mathematics, Dolivostra\ss{}e 15, 64293 Darmstadt, Germany}
\email{erath@mathematik.tu-darmstadt.de}

\author{Robert Schorr}
\address{TU Darmstadt, Graduate School of Computational Engineering, Dolivostra\ss{}e 15, 64293 Darmstadt, Germany}
\email{schorr@gsc.tu-darmstadt.de}

\thanks{H. Egger: TU Darmstadt, Germany; erath@mathematik.tu-darmstadt.de}
\thanks{C. Erath (corresponding author): TU Darmstadt, Germany; erath@mathematik.tu-darmstadt.de}
\thanks{R. Schorr: TU Darmstadt, Germany; schorr@gsc.tu-darmstadt.de;
The research of this author was supported by the \emph{Excellence Initiative} 
  of the German Federal and State Governments 
  and the \emph{Graduate School of Computational Engineering} at TU Darmstadt.}

\date{\bf\today}


\newcommand{\RR}{\mathbb{R}} 

\def\T{\mathcal{T}}  
\def\E{\mathcal{E}}  
\def\O{\mathcal{O}}  
\def\P{\mathcal{P}}  

\def\normal{n}

\def\V{\mathcal{V}} 
\def\K{\mathcal{K}} 
\def\S{\mathcal{S}} 
\def\R{\mathcal{R}} 

\def\diam{{\operatorname{diam}}}

\def\dt{\partial_t}

\def\dn{\partial_{\normal}}
\def\dtau{d_\tau}


\def\set#1#2{\left\{#1\,:\,#2\right\}}

\newcommand{\norm}[3][]{#1\|#2#1\|_{#3}}

\newcommand{\dual}[3][]{#1\langle#2\hspace*{.5mm},#3#1\rangle}
\newcommand{\product}[3][]{#1(#2\hspace*{.5mm},#3#1)}

\begin{document}

\begin{abstract}
We consider the numerical approximation of parabolic-elliptic interface problems 
by the non-symmetric coupling method of 
MacCamy and Suri [Quart. Appl. Math., 44 (1987), pp. 675--690]. 
We establish
well-posedness of this formulation for problems with non-smooth interfaces and 
prove quasi-optimality for a 
class of conforming Galerkin approximations in space. 
Therefore, 
error estimates with optimal order can be deduced for the 
semi-discretization in space by appropriate finite and boundary elements.
Moreover, we investigate the subsequent discretization in time by a variant of 
the implicit Euler method. 
As for the semi-discretization, we establish well-posedness and quasi-optimality 
for the fully discrete scheme
under minimal regularity assumptions on the solution. Error estimates with optimal order 
follow again directly.
Our analysis is based on estimates in appropriate energy norms. 
Thus, we do not use duality arguments and corresponding estimates 
for an elliptic projection which 
are not available for the non-symmetric coupling method. 
Additionally, we provide again error estimates under minimal 
regularity assumptions.
Some numerical examples illustrate our theoretical results.\\[0.25\baselineskip]
\noindent \textbf{Keywords.}
parabolic-elliptic interface problem,
finite element method, boundary element method, non-symmetric coupling,
 method of lines, convergence, quasi-optimality, optimal error estimates\\[0.25\baselineskip]
\noindent \textbf{Mathematics subject classification.}
65N30, 65N38, 65N40, 65N12, 65N15, 82B24
\end{abstract}

\maketitle

\section{Introduction} \label{sec:intro}
In this paper, we consider the numerical solution of parabolic-elliptic interface problems via 
the non-symmetric coupling method of MacCamy and Suri~\cite{MacCamy:1987}, which consists of a 
Galerkin approximation in space 
and a subsequent discretization in time by a variant of the implicit Euler method. 
For ease of presentation we consider the following simple model problem:
Find $u$ and $u_e$ such that 
\begin{alignat}{2}
\dt u -\Delta u &= \tilde f              &\qquad& \text{in }\Omega\times (0,T),   \label{eq:model1} \\
           -\Delta u_e &= 0              &\qquad& \text{in } \Omega_e \times (0,T) \label{eq:model2} \\
\intertext{with coupling conditions across the interface given by}
                     u &= u_e + \tilde g         &\qquad& \text{on } \Gamma \times (0,T), \label{eq:model3} \\
          \partial_{\normal} u &= \partial_{\normal} u_e +\tilde h\ &\qquad& \text{on } \Gamma \times (0,T) \label{eq:model4}.
\end{alignat}
For the presentation of our results we assume that
$\Omega \subset \RR^2$ is some bounded Lipschitz domain with $\diam(\Omega)<1$. However, all results also hold for three dimensions. 
We further denote by $\Gamma:=\partial \Omega$ and $\Omega_e = \RR^2 \setminus \overline{\Omega}$ the boundary
and the complement of $\Omega$,
and by $T>0$ a fixed end time. 
The co-normal derivative $\partial_{\normal} u = \nabla u \cdot \normal|_\Gamma$ is taken in direction 
of the unit normal vector $\normal$ on $\Gamma$ pointing outward with respect to $\Omega$.
The input data for the model are $\tilde f$, $\tilde g$, and $\tilde h$.  
To ensure the uniqueness of the solution, we additionally require the following initial and radiation conditions
\begin{alignat}{2}
            u(\cdot,0) &= 0               &\qquad& \text{on } \Omega, \label{eq:model5}\\
            u_e(x,t) &= a(t) \log|x| + \O(|x|^{-1})  &\qquad& |x| \to \infty. \label{eq:model6}
\end{alignat}
The function $a(t):[0,T]\to\RR$ is unknown and automatically determined
in the solving process, see \cref{rem:solution}.  
A system of this type arises, for instance, in the modeling of eddy currents in the 
magneto-quasistatic regime~\cite{MacCamy:1987}.
In our model problem we might also allow inhomogeneous initial data 
and extra Dirichlet or Neumann boundaries in the interior domain.
Then the analysis in this paper holds by obvious modifications.  

Using the well-known representation formula~\cite{McLean:2000-book}, 
the field $u_e$ in the exterior domain can be expressed via the traces $u_e|_\Gamma$ 
and $\phi:=\partial_{\normal} u_e|_\Gamma$ on the interface $\Gamma$.
This allows us to reduce the above problem to a parabolic partial differential 
equation in $\Omega$ coupled to an integral equation at the boundary $\Gamma$ 
with $u$ and $\phi$ as the unknown fields. 
Different equivalent formulations are possible here, which 
lead, after discretization, to various numerical approximation schemes.
Based on the non-symmetric coupling method of Johnson and N{\'e}d{\'e}lec~\cite{Johnson:1980-1}, 
MacCamy and Suri~\cite{MacCamy:1987} established the well-posedness of 
problem~\cref{eq:model1}--\cref{eq:model6} via the method of Galerkin approximation.
Their analysis is based on the compactness of the double layer operator which 
relies on the assumption that $\Gamma$ is smooth~\cite{Costabel:1988-1}. 
As a by-product of their analysis, the authors also proved quasi-optimal error estimates
in the energy norm
for general Galerkin approximations under mild assumptions on the approximation spaces, i.e., 
\begin{align*}
\norm{u - u_h&}{L^2(0,T;H^1(\Omega))} + \norm{\dt u - \dt u_h}{L^2(0,T;H^1(\Omega)')} + 
\norm{\phi - \phi_h}{L^2(0,T;H^{-1/2}(\Gamma))}\\
&\le C \inf_{v_h,\psi_h} \{\norm{u - v_h}{L^2(0,T;H^1(\Omega))} + \norm{\dt u - \dt v_h}{L^2(0,T;H^1(\Omega)')} \\
&\qquad\qquad\quad+ \norm{\phi - \psi_h}{L^2(0,T;H^{-1/2}(\Gamma))}\}.
\end{align*}
Here $u_h$ and $\phi_h$ are the semi-discrete approximations of $u$ and $\phi$, respectively.
Hence, a discretization by appropriate finite and boundary elements directly leads to error estimates 
with optimal order for the resulting semi-discrete schemes.

To overcome the restrictive smoothness assumption on the domain $\Omega$, 
Costabel, Ervin, and Stephan~\cite{Costabel:1990} applied the symmetric coupling approach 
proposed in~\cite{Costabel:1988-2} to treat the parabolic-elliptic interface problem 
stated above. This allowed them to prove the well-posedness 
of~\cref{eq:model1}--\cref{eq:model6} and the quasi-optimality of 
Galerkin approximations also for non-smooth domains.
In addition, they investigated the subsequent time discretization by the Crank-Nicolson method and 
established error estimates for the resulting fully discrete scheme.
The analysis of~\cite{Costabel:1990} is based on an 
elliptic projection and corresponding error estimates in $L^2$, and therefore 
relies on duality arguments; see e.g.~\cite{Varga:1971-book,Wheeler:1973}.  
Due to a lack of ``adjoint consistency'' for the non-symmetric coupling method of MacCamy and Suri
these arguments cannot be used for its analysis.
Therefore, ``an analysis of a fully discretized version of their coupling scheme is not available and will 
be difficult'', as argued in~\cite{Costabel:1990}.

In this paper, we close this gap in the analysis of the non-symmetric coupling 
method for parabolic-elliptic interface problems.  
Our main results can be summarized as follows: 
\begin{itemize}
\item Based on an argument of Sayas~\cite{Sayas:2009-1}, Steinbach~\cite{Steinbach:2011} 
showed that the non-symmetric coupling of the elliptic-elliptic interface problem 
with a lowest order term in the interior domain  
in fact leads to a coercive variational formulation; see also \cite{Erath:2017-1}. 
This allows us to extend the results of~\cite{MacCamy:1987,Costabel:1990} to the non-symmetric 
coupling method on non-smooth domains. In particular, we establish well-posedness 
of this formulation and prove quasi-optimal error estimates for Galerkin approximations.
\item As a second step of our analysis, we also consider the time discretization of the 
semi-discrete scheme of~\cite{MacCamy:1987} by a variant of the implicit Euler method.
We utilize a formulation that is fully consistent with the continuous variational 
formulation and does not require additional smoothness of the solution or the data;
see~\cite{Tantardini:2014-1} for a related approach in the context of parabolic problems. 
This allows us to establish well-posedness and quasi-optimal approximation properties 
with respect to the energy norm under minimal smoothness assumptions on the solution.
\end{itemize}

For ease of notation, we will present the details of our analysis only for 
the simple model problem~\cref{eq:model1}--\cref{eq:model6} stated above.
Our arguments, however, are quite general and can be also applied to interface problems with more general 
parabolic operators and interface conditions, and in higher space dimensions. 
Our approach might also be useful for the analysis of other coupling strategies; 
let us refer to \cite{Aurada:2013-1} for a recent survey of possible couplings.

The remainder of the manuscript is organized as follows:
In \cref{sec:prelim}, we introduce our basic notation and assumptions. 
Then we present the weak formulation of the non-symmetric coupling approach 
and establish its well-posedness.
\Cref{sec:galerkin} introduces a semi-discretization of the variational 
problem in space by a Galerkin approach. 
Furthermore, we establish well-posedness of the semi-discrete scheme and 
quasi-optimal approximation properties. 
In \cref{sec:time}, we discuss the time discretization by a variant of the implicit 
Euler method 
and prove again quasi-optimal error estimates under minimal smoothness assumptions. 
In \cref{sec:fembem}, we consider space discretization by finite and boundary
elements. Using the analysis of the previous sections, 
we derive explicit error estimates for the resulting 
semi-discrete and fully-discrete schemes. 
For illustration of our theoretical results, we present
some numerical tests in \cref{sec:numerics}.

\section{Notation and weak formulation} \label{sec:prelim}

In this section, we first introduce some basic notation and assumptions.
Then we formulate and analyze a weak formulation of our model problem. 

\subsection{Notation and basic assumptions}

Throughout the next sections, we make the following assumption on the domain:
\begin{align}
 \label{as:A1}
 \tag{A1}
  \Omega \subset \RR^2 \text{ is a bounded Lipschitz domain and }\diam(\Omega)<1.
\end{align}
Note that $\diam(\Omega)<1$ can always be achieved by scaling.
We write $H^s(\Omega)$ and $H^s(\Gamma)$ for the usual Sobolev spaces
and denote by $H^s(\Omega)'$ and $H^{-s}(\Gamma)=H^s(\Gamma)'$ 
their dual spaces with respect to the duality pairing induced by $L^2$; see~\cite{Evans:2010-book,McLean:2000-book} for details. 
We use $\product{\cdot}{\cdot}_\Omega$ and $\dual{\cdot}{\cdot}_\Omega$, and on the 
boundary $\product{\cdot}{\cdot}_{\Gamma}$ and $\dual{\cdot}{\cdot}_{\Gamma}$ 
to denote the corresponding scalar products and duality pairings.
Let us recall that 
\begin{align*}
\dual{\psi}{v}_{\Gamma} 
\le \norm{\psi}{H^{-1/2}(\Omega)} \norm{v}{H^{1/2}(\Gamma)} 
\le C_{tr}  \norm{\psi}{H^{-1/2}(\Omega)} \norm{v}{H^1(\Omega)}
\end{align*}
for all $\psi \in H^{-1/2}(\Gamma)$ and $v \in H^1(\Omega)$ with a constant $C_{tr}>0$. 
In the first and second statement, one should formally write $\gamma v$ instead of $v$, 
where $\gamma : H^1(\Omega) \to H^{1/2}(\Gamma)$ denotes the trace operator. 
We skip the explicit notation of the trace operator since the meaning is clear from the context.
The last inequality encodes the continuity of the trace operator. 

For ease of presentation and to allow for an easy comparison of the results, 
we adopt the notation of~\cite{Costabel:1990} and denote by
\begin{align*}
H&=H^1(\Omega) \qquad \text{and} \qquad B=H^{-1/2}(\Gamma)
\end{align*}
the main function spaces arising in our analysis.
Furthermore, we use 
\begin{align*}
H_T=L^2(0,T;H) \qquad \text{and} \qquad B_T=L^2(0,T;B)
\end{align*}
to denote the corresponding Bochner spaces of functions on $[0,T]$ with values in $H$ and $B$, respectively. 
The associated dual spaces are given by $H'=H^1(\Omega)'$ and $B'=H^{-1/2}(\Gamma)'=H^{1/2}(\Gamma)$ 
as well as $H_T'=L^2(0,T;H')$ and $B_T'=L^2(0,T;B')$. All spaces introduced above are Hilbert spaces if equipped 
with their natural norms, e.g., $\norm{u}{H_T}^2 = \int_0^T \norm{u(t)}{H}^2\, dt$. We further use
\begin{align*}
Q_T = \set{u \in H_T}{\dt u \in H_T' \text{ and } u(0)=0}
\end{align*}
to denote the natural energy space for the parabolic problem with the norm
\begin{align*}
\norm{u}{Q_T}^2 := \norm{u}{H_T}^2 + \norm{\dt u}{H_T'}^2. 
\end{align*}
This space is again complete.
It is well-known that the space $Q_T$ is continuously 
embedded in $C([0,T];L^2(\Omega))$; see, e.g.,~\cite{Evans:2010-book}.
Thus the initial value $u(0)=0$ makes sense.

\subsection{Preliminaries}

Let $(u,u_e)$ denote a sufficiently smooth solution of problem~\cref{eq:model1}--\cref{eq:model6}.
Then multiplying equation~\cref{eq:model1} with a test function $v \in H^1(\Omega)$, 
integrating over $\Omega$, and using integration by parts formally lead to 
\begin{align*}
\int_\Omega \dt u(t) v\,dx  + \int_\Omega \nabla u(t)\cdot\nabla v\,dx 
- \int_{\Gamma} \phi(t) v\,ds 
= \int_\Omega \tilde f(t) v\,dx + \int_\Gamma \tilde h v\,ds.
\end{align*}
Here, we used equation~\cref{eq:model4} with $\phi:=\dn u_e|_\Gamma$ 
to replace the interior co-normal derivative.
For the right-hand side, we will use the short hand notation 
\begin{align}
 \label{eq:f}
 \dual{f}{v}_{\Omega} := \int_\Omega \tilde f v \,dx + \int_{\Gamma} \tilde h v \,ds. 
\end{align}
and write $f\in H'_T$.
With the representation formula for the Laplacian, we can further express 
the solution for~\cref{eq:model2} and~\cref{eq:model6} in the exterior domain 
$\Omega_e$ by
\begin{align}
 \label{eq:repformular}
u_e(x) = \int_{\Gamma} \partial_{\normal_y} G(x,y) u_e(y)|_\Gamma\,ds_y 
- \int_{\Gamma} G(x,y) \dn u_e(y)|_\Gamma\,ds_y.
\end{align}
Here $G(x,y) = - \frac{1}{2\pi} \log|x-y|$ denotes the fundamental solution 
of the Laplace operator in two dimensions~\cite{McLean:2000-book}. 
Upon taking the trace at the boundary $\Gamma$, writing again $\phi=\dn u_e|_\Gamma$ at $\Gamma$, 
and using the coupling condition~\cref{eq:model3} 
to replace $u_e|_\Gamma$ by $u|_\Gamma$ 
we obtain 
\begin{align}
 \label{eq:g}
\V \phi + (1/2-\K) u|_{\Gamma} =  (1/2 - \K) \tilde g =:g. 
\end{align}
Here, $\V$ and $\K$ denote the single and double layer operators.
For sufficiently smooth functions and domains they are given by~\cite{McLean:2000-book}
\begin{align*}
(\V \psi)(x) = \int_{\Gamma} G(x,y) \psi(y) \,ds_y 
\qquad \text{and} \qquad 
(\K v)(x) = \int_{\Gamma} \partial_{\normal_y} G(x,y) v(y) \,ds_y.
\end{align*}
By assumption~\cref{as:A1} they can be extended to 
bounded linear operators on $H^{-1/2}(\Gamma)$ 
and $H^{1/2}(\Gamma)$, respectively; see~\cref{lem:elliptic}.

\subsection{Variational formulation}

A combination of the above formulas leads to the following weak formulation, 
which will be the starting point for our analysis.

\begin{problem}[Variational problem] \label{prob:variational}
Given $f \in H'_T$ and $g \in B_T'$, find $u \in Q_T$ and $\phi \in B_T$ such that 
\begin{align}
\dual{\dt u(t)}{v}_\Omega + \product{\nabla u(t)}{\nabla v}_\Omega - \dual{\phi(t)}{v}_{\Gamma} 
&= \dual{f(t)}{v}_\Omega, \label{eq:vp1}\\
\dual{(1/2-\K) u(t)|_\Gamma}{\psi}_{\Gamma} + \dual{\V \phi(t)}{\psi}_{\Gamma} 
&= \dual{g(t)}{\psi}_{\Gamma}  \label{eq:vp2}
\end{align}
for all test functions $v \in H=H^1(\Omega)$ and $\psi \in B=H^{-1/2}(\Gamma)$, 
and for a.e. $t \in [0,T]$. 
\end{problem}
\begin{remark} 
 \label{rem:solution}
Any sufficiently smooth solution of~\cref{eq:model1}--\cref{eq:model6} 
also solves~\cref{eq:vp1}--\cref{eq:vp2} with 
$\dual{f}{v}_{\Omega} = \dual{\tilde f}{v}_\Omega + \dual{\tilde h}{v}_\Gamma$ and
$\dual{g}{\psi}_\Gamma=\dual{(1/2-\K)\tilde g}{\psi}_\Gamma$ and, vice versa, 
any regular solution $(u,\phi)$ of~\cref{eq:vp1}--\cref{eq:vp2}
is a classical solution of~\cref{eq:model1}--\cref{eq:model6}. 
We note that $a(t)$ in~\cref{eq:model6}
can be expressed directly in terms of the field $u_e$, once the solution $(u,\phi)$
of~\cref{eq:vp1}--\cref{eq:vp2} is known, i.e.,
$a(t)=\frac{1}{2\pi} \int_{\Gamma} \phi \,ds$, where $\phi=\partial_{\normal} u_e|_\Gamma$.
\end{remark}
The analysis of \cref{prob:variational} is based on the following auxiliary results. 
\begin{lemma} \label{lem:elliptic}
Let~\cref{as:A1} hold. 
Then the linear operators $\V:H^{s-1/2}(\Gamma) \to H^{s+1/2}(\Gamma)$ 
and $\K : H^{s+1/2}(\Gamma) \to H^{s+1/2}(\Gamma)$, $s\in [-1/2,1/2]$, are bounded  
and $\V$ is elliptic on $H^{-1/2}(\Gamma)$, i.e., 
\begin{align*}
\dual{\V \psi}{\psi}_{\Gamma} \ge C_\V \norm{\psi}{H^{-1/2}(\Gamma)}^2 
\qquad \text{for all } \psi \in H^{-1/2}(\Gamma)
\end{align*}
with some $C_\V>0$ independent of $\psi$. Moreover, the bilinear form
\begin{align*}
a(u,\phi;v,\psi) := \product{\nabla u}{\nabla v}_\Omega - \dual{\phi}{v}_{\Gamma} 
+ \dual{(1/2 - \K) u}{\psi}_{\Gamma} + \dual{\V \phi}{\psi}_{\Gamma},
\end{align*}
is continuous and satisfies a G\r{a}rding inequality 
on $H^1(\Omega) \times H^{-1/2}(\Gamma)$, i.e., 
\begin{align*}
a(v,\psi;v,\psi) + \product{v}{v}_\Omega \ge \alpha \big(\norm{v}{H^1(\Omega)}^2 + 
\norm{\psi}{H^{-1/2}(\Gamma)}^2 \big)
\end{align*}
with $\alpha>0$ independent of the functions $v \in H^1(\Omega)$ and $\psi \in H^{-1/2}(\Gamma)$.
\end{lemma}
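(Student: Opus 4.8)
My plan is to treat the three assertions separately. The mapping properties, the ellipticity of $\V$, and the continuity of $a$ are standard and could, if desired, simply be cited; the G\r{a}rding inequality is the only part needing a genuine idea, and there I would follow the argument of Sayas and Steinbach that underlies the coercivity of the non-symmetric coupling.

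\textbf{Standard ingredients.} For the boundedness of $\V:H^{s-1/2}(\Gamma)\to H^{s+1/2}(\Gamma)$ and $\K:H^{s+1/2}(\Gamma)\to H^{s+1/2}(\Gamma)$, $s\in[-1/2,1/2]$, on a Lipschitz boundary I would invoke the results in~\cite{Costabel:1988-1,McLean:2000-book}. The $H^{-1/2}(\Gamma)$-ellipticity of $\V$ is the only place where the scaling hypothesis in~\cref{as:A1} enters: in two dimensions the logarithmic kernel renders $\dual{\V\psi}{\psi}_\Gamma$ only positive semidefinite on $H^{-1/2}(\Gamma)$ in general, and coercivity is recovered exactly when the logarithmic capacity of $\Gamma$ is below one, which $\diam(\Omega)<1$ ensures (in three dimensions it is unconditional); see~\cite{McLean:2000-book}. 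This produces $C_\V>0$; in particular $\V:H^{-1/2}(\Gamma)\to H^{1/2}(\Gamma)$ is an isomorphism, $\dual{\V\cdot}{\cdot}_\Gamma$ is an inner product equivalent to $\norm{\cdot}{H^{-1/2}(\Gamma)}^2$, and one has the dual Cauchy--Schwarz inequality $\dual{\chi}{\psi}_\Gamma\le\dual{\V^{-1}\chi}{\chi}_\Gamma^{1/2}\dual{\V\psi}{\psi}_\Gamma^{1/2}$ for $\chi\in H^{1/2}(\Gamma)$, $\psi\in H^{-1/2}(\Gamma)$. Continuity of $a$ then follows from Cauchy--Schwarz, these mapping properties (with $s=0$), and the trace estimate recalled before the statement.

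\textbf{The G\r{a}rding inequality.} Testing with $(v,\psi)$ and using $\dual{(1/2-\K)v}{\psi}_\Gamma-\dual{\psi}{v}_\Gamma=-\dual{(1/2+\K)v}{\psi}_\Gamma$, a short computation gives
\begin{align*}
a(v,\psi;v,\psi)+\product{v}{v}_\Omega=\norm{v}{H^1(\Omega)}^2+\dual{\V\psi}{\psi}_\Gamma-\dual{(1/2+\K)v}{\psi}_\Gamma .
\end{align*}
The first two terms are already coercive, so the task is to tame the indefinite cross term, and here mere boundedness of $1/2+\K$ is useless because the constant delivered by Young's inequality need not be small --- this is precisely the point at which the naive estimate, and the compactness argument available on smooth $\Gamma$, break down on Lipschitz domains. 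Following~\cite{Sayas:2009-1,Steinbach:2011} (see also~\cite{Erath:2017-1}), the idea is to spend part of the interior Dirichlet energy: since the harmonic extension of $v|_\Gamma$ minimizes $\norm{\nabla\cdot}{L^2(\Omega)}$ among $H^1(\Omega)$-functions with that trace,
\begin{align*}
\norm{\nabla v}{L^2(\Omega)}^2\ \ge\ \dual{\V^{-1}(1/2+\K)v}{v}_\Gamma\ \ge\ \dual{\V^{-1}(1/2+\K)v}{(1/2+\K)v}_\Gamma ,
\end{align*}
where the first step identifies $\norm{\nabla\cdot}{L^2(\Omega)}^2$ on harmonic extensions with the interior Steklov--Poincar\'e form written non-symmetrically as $\dual{\V^{-1}(1/2+\K)\cdot}{\cdot}_\Gamma$, and the second uses the Calder\'on identity $(1/2-\K')\V^{-1}(1/2+\K)=\mathcal{W}\ge 0$ with $\mathcal{W}$ the hypersingular operator.

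\textbf{Conclusion and the main obstacle.} Abbreviating $a_\star:=\dual{\V^{-1}(1/2+\K)v}{(1/2+\K)v}_\Gamma$ and $b_\star:=\dual{\V\psi}{\psi}_\Gamma$, the dual Cauchy--Schwarz inequality bounds the cross term by $\sqrt{a_\star b_\star}$, so splitting $\norm{\nabla v}{L^2(\Omega)}^2$ in half and keeping $a_\star$ from one half yields
\begin{align*}
a(v,\psi;v,\psi)+\product{v}{v}_\Omega\ \ge\ \norm{v}{L^2(\Omega)}^2+\tfrac12\norm{\nabla v}{L^2(\Omega)}^2+\big(\tfrac12 a_\star+b_\star-\sqrt{a_\star b_\star}\big) .
\end{align*}
Elementary positivity of the quadratic form gives $\tfrac12 a_\star+b_\star-\sqrt{a_\star b_\star}\ge\tfrac16 b_\star\ge\tfrac{C_\V}{6}\norm{\psi}{H^{-1/2}(\Gamma)}^2$, and $\norm{v}{L^2(\Omega)}^2+\tfrac12\norm{\nabla v}{L^2(\Omega)}^2\ge\tfrac12\norm{v}{H^1(\Omega)}^2$, so the G\r{a}rding inequality holds with $\alpha=\min\{1/2,C_\V/6\}$. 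The only genuinely non-routine ingredients are the two inequalities of the penultimate display, i.e.\ the representation $\V^{-1}(1/2+\K)$ of the interior Steklov--Poincar\'e operator together with $\dual{\V^{-1}(1/2+\K)v}{v}_\Gamma\ge\dual{\V^{-1}(1/2+\K)v}{(1/2+\K)v}_\Gamma$ --- equivalently the contraction property of the double-layer operator --- which rest on the Calder\'on calculus for the Laplacian on Lipschitz domains; I expect this to be the main obstacle, while the rest is bookkeeping.
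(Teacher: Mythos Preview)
Your proposal is correct and follows the same route as the paper, which simply cites \cite{Costabel:1988-1,McLean:2000-book} for the mapping properties and ellipticity of $\V$, and invokes \cite[Theorem~1]{Erath:2017-1} (with $\mathbf{A}=\mathcal{I}$, $C_{\mathbf{b}c}=1$, $\beta=0$) for the G\r{a}rding inequality. You have essentially unpacked that cited result: the Steklov--Poincar\'e bound $\norm{\nabla v}{L^2(\Omega)}^2\ge\dual{\V^{-1}(1/2+\K)v}{(1/2+\K)v}_\Gamma$ via the Calder\'on identity $(1/2-\K')\V^{-1}(1/2+\K)=\mathcal{W}\ge 0$, together with Cauchy--Schwarz in the $\V$-inner product and an elementary quadratic estimate, is exactly the Sayas--Steinbach mechanism behind \cite{Erath:2017-1}.
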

\begin{proof}
Boundedness and ellipticity of the integral operators are well-known; 
see for instance~\cite{Costabel:1988-1,McLean:2000-book}. 
The coercivity estimate for the bilinear form $a(\cdot;\cdot)$, on the other hand, follows
directly by applying~\cite[Theorem~1]{Erath:2017-1} 
with $\mathbf{A}=\mathcal{I}$, $C_{\mathbf{b}c}=1$, and $\beta=0$.
\end{proof}

Using these properties, we now prove the well-posedness of \cref{prob:variational}.
\begin{theorem} 
\label{thm:wellposed}
Let~\cref{as:A1} hold. Then for any $f \in H_T'$ and $g \in B_T'$, \cref{prob:variational} 
admits a unique weak solution
$(u,\phi) \in Q_T \times B_T$ and 
\begin{align*}
\norm{u}{Q_T} + \norm{\phi}{B_T} \le C ( \norm{f}{H_T'} + \norm{g}{B_T'})
\end{align*}
with a constant $C>0$ that only depends on the domain $\Omega$ and the time horizon $T$.
\end{theorem}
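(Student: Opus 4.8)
The plan is to establish existence and uniqueness of the weak solution $(u,\phi)$ by a Galerkin argument in time (or, equivalently, via a Galerkin approximation combined with a priori estimates and a compactness/limiting argument), using the G\r{a}rding inequality from \cref{lem:elliptic} to control the spatial bilinear form. First I would eliminate $\phi$ locally in time: for each fixed $v$ in the test space and a.e.\ $t$, equation~\cref{eq:vp2} together with the ellipticity of $\V$ on $H^{-1/2}(\Gamma)$ (Lax--Milgram) shows that $\phi(t)$ is uniquely determined by $u(t)|_\Gamma$ and $g(t)$, namely $\phi(t) = \V^{-1}\big(g(t) - (1/2-\K)u(t)|_\Gamma\big)$, with $\norm{\phi(t)}{B} \le C(\norm{u(t)}{H} + \norm{g(t)}{B'})$. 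Substituting this into~\cref{eq:vp1} yields a parabolic evolution equation for $u$ alone,
\begin{align*}
\dual{\dt u(t)}{v}_\Omega + b(u(t),v) = \dual{F(t)}{v}_\Omega \qquad \text{for all } v \in H,
\end{align*}
where $b(u,v) := \product{\nabla u}{\nabla v}_\Omega + \dual{\V^{-1}(1/2-\K)u|_\Gamma}{v}_\Gamma$ is a bounded bilinear form on $H\times H$ and $F$ absorbs $f$ and the $g$-contribution from the eliminated $\phi$, with $F \in H_T'$.

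The key point is that this reduced bilinear form $b$ satisfies a G\r{a}rding inequality on $H$. This follows from the G\r{a}rding inequality for $a(\cdot;\cdot)$ in \cref{lem:elliptic}: testing $a(v,\psi;v,\psi)$ with $\psi = \V^{-1}(1/2-\K)v|_\Gamma$ (so that the $\dual{\V\psi}{\psi}$ and $\dual{(1/2-\K)v}{\psi}$ terms combine correctly) shows $b(v,v) + \norm{v}{L^2(\Omega)}^2 \ge \alpha'\norm{v}{H}^2$ for some $\alpha' > 0$, after also using $\norm{\psi}{B} \lesssim \norm{v}{H}$ and $\dual{\V\psi}{\psi}_\Gamma \ge 0$ to handle cross terms; alternatively one may substitute $\psi$ from~\cref{eq:vp2} directly into the G\r{a}rding estimate of the full system. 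With a G\r{a}rding inequality in hand, standard parabolic theory (e.g.\ the Lions--Lax--Milgram / Galerkin approach in~\cite{Evans:2010-book,Wloka} for equations of the form $\dt u + b(u,\cdot) = F$ with $b$ satisfying $b(v,v) + \lambda\norm{v}{L^2}^2 \ge \alpha'\norm{v}{H}^2$) gives a unique $u \in Q_T$ with the estimate $\norm{u}{Q_T} \le C(\norm{F}{H_T'}) \le C(\norm{f}{H_T'} + \norm{g}{B_T'})$; the shift $e^{-\lambda t}u$ removes the $\lambda\norm{u}{L^2}^2$ term in the usual way. Then $\phi$ is recovered from the formula above, and its bound $\norm{\phi}{B_T} \le C(\norm{u}{H_T} + \norm{g}{B_T'}) \le C(\norm{f}{H_T'} + \norm{g}{B_T'})$ follows immediately, completing the a priori estimate.

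For uniqueness I would take $f = g = 0$: then $F = 0$, the reduced equation has only the trivial solution $u = 0$ by the parabolic energy estimate (Gr\"onwall after the G\r{a}rding inequality and $u(0)=0$), and hence $\phi = 0$ as well. The main obstacle I anticipate is the fact that $b$ is \emph{not} coercive but only satisfies a G\r{a}rding inequality — the non-symmetric coupling contributes an indefinite (and non-symmetric) boundary term $\dual{\V^{-1}(1/2-\K)u|_\Gamma}{v}_\Gamma$ which need not have a sign. This is exactly why one cannot invoke the most elementary coercive parabolic existence theorem directly; one must either use the exponential-shift trick to reduce to a coercive problem, or cite the Galerkin-based existence theory for parabolic equations with G\r{a}rding-type bilinear forms. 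A secondary technical point is verifying carefully that the elimination of $\phi$ is legitimate at the level of Bochner spaces (measurability in $t$ of $t \mapsto \phi(t)$, which follows since $\V^{-1}$ is a bounded linear operator and $u \in H_T$, $g \in B_T'$), so that the reduced and original formulations are genuinely equivalent.
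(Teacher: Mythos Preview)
Your proposal is correct and follows essentially the same route as the paper: eliminate $\phi$ via the ellipticity of $\V$, obtain a reduced parabolic problem for $u$ with a bilinear form satisfying a G\r{a}rding inequality inherited from \cref{lem:elliptic}, invoke standard parabolic existence theory, and then recover and bound $\phi$ from~\cref{eq:vp2}. The only slip is a sign: to get $b(v,v)=a(v,\psi;v,\psi)$ exactly (so that the $\dual{(1/2-\K)v}{\psi}_\Gamma$ and $\dual{\V\psi}{\psi}_\Gamma$ terms cancel and no cross terms remain), you should choose $\psi=\V^{-1}(\K-1/2)v$, not $\V^{-1}(1/2-\K)v$; with the correct sign the identity $b(v,v)+\product{v}{v}_\Omega=a(v,\psi;v,\psi)+\product{v}{v}_\Omega\ge\alpha\norm{v}{H}^2$ is immediate, exactly as in the paper.
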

\begin{proof}
Since $\V$ is elliptic and thus invertible, we can use~\cref{eq:vp2} to express
$\phi(t) = \S u(t) + \R g(t)$ with $\S=\V^{-1} (\K - 1/2)$ and $\R=\V^{-1}$. 
Then~\cref{eq:vp1} can be reduced to 
\begin{align} \label{eq:reduced}
\dual{\dt u(t)}{v}_\Omega + \tilde a(u(t),v) = \dual{f(t)}{v}_{\Omega} + \dual{\R g(t)}{v}_{\Gamma}
\end{align}
with the bilinear form $\tilde a(u,v) := \product{\nabla u}{\nabla v}_\Omega - \dual{\S u}{v}_{\Gamma}$. 
From the G\r{a}rding inequality for the bilinear form $a(\cdot,\cdot)$ 
in \cref{lem:elliptic} with 
$\psi = \V^{-1}(\K-1/2) v$ we deduce that for all $v\in H^1(\Omega)$
\begin{align*}
\tilde a(v,v) + \product{v}{v}_\Omega 
&= a(v,\psi;v,\psi) + \product{v}{v}_\Omega 
 \ge \alpha \norm{v}{{H^1(\Omega)}}^2. 
\end{align*}
Thus $\tilde a(u,v)$ satisfies the G\r{a}rding inequality on $H^1(\Omega)$.
Consequently, the reduced problem~\cref{eq:reduced} is uniformly parabolic. 
The assertions for $u$ in~\cref{eq:reduced} then follow from standard results about variational 
evolution problems, see, e.g.,~\cite[Ch.~XVIII, Par.~3]{Dautray:1992-5}
and~\cite[Part II, Sec. 7.1.2]{Evans:2010-book}.
To bound the second solution component $\phi$
we use~\cref{eq:vp2} and the ellipticity of $\V$ which gives
\begin{align*}
C_\V \norm{\phi(t)}{{H^{-1/2}(\Gamma)}}^2 
&\le \dual{V \phi(t)}{\phi(t)}_{\Gamma}
 =-\dual{(1/2-\K) u(t)}{\phi(t)}_{\Gamma} + \dual{g(t)}{\phi(t)}_{\Gamma} \\
&\le \big((1/2+C_\K) C_{tr} \norm{u(t)}{H^1(\Omega)} 
+ \norm{g(t)}{H^{1/2}(\Gamma)} \big) \norm{\phi(t)}{H^{-1/2}(\Gamma)}.  
\end{align*}
In the last step, we used the trace inequality and 
the boundedness of $\K$. 
\end{proof}

\begin{corollary}
 For $\tilde f \in H'_T$, $\tilde g\in B'_T$, and $\tilde h\in B_T$ our model 
 problem~\cref{eq:model1}--\cref{eq:model6} admits a unique weak solution $(u,\phi) \in Q_T \times B_T$
 and
 \begin{align*}
  \norm{u}{Q_T} + \norm{\phi}{B_T}\le C( \norm{\tilde f}{H_T'} + \norm{\tilde h}{B_T} + \norm{\tilde g}{B_T'}).
 \end{align*}
\end{corollary}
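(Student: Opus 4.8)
The plan is to derive the corollary directly from \cref{thm:wellposed} by verifying that the data built from $\tilde f$, $\tilde g$, $\tilde h$ lie in the spaces required there, and that their norms are controlled by the norms of $\tilde f$, $\tilde g$, $\tilde h$. First I would recall the identifications made in \cref{rem:solution}: the right-hand side of the interior equation is $\dual{f}{v}_\Omega = \dual{\tilde f}{v}_\Omega + \dual{\tilde h}{v}_\Gamma$, and the boundary data is $g = (1/2-\K)\tilde g$. So the whole task reduces to the bounds $\norm{f}{H_T'} \le C(\norm{\tilde f}{H_T'} + \norm{\tilde h}{B_T})$ and $\norm{g}{B_T'} \le C \norm{\tilde g}{B_T'}$, after which \cref{thm:wellposed} applies verbatim.

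For the first bound I would work pointwise in $t$. Fix $v \in H^1(\Omega)$ with $\norm{v}{H^1(\Omega)} \le 1$. The term $\dual{\tilde f(t)}{v}_\Omega$ is bounded by $\norm{\tilde f(t)}{H'}$ by definition of the dual norm. For the boundary term, the trace inequality recalled in \cref{sec:prelim} gives $\dual{\tilde h(t)}{v}_\Gamma \le \norm{\tilde h(t)}{H^{-1/2}(\Gamma)} \norm{v}{H^{1/2}(\Gamma)} \le C_{tr} \norm{\tilde h(t)}{B} \norm{v}{H^1(\Omega)}$. Taking the supremum over such $v$ yields $\norm{f(t)}{H'} \le \norm{\tilde f(t)}{H'} + C_{tr} \norm{\tilde h(t)}{B}$ for a.e. $t$; squaring, integrating over $[0,T]$, and using $(a+b)^2 \le 2a^2 + 2b^2$ gives the $H_T'$ estimate. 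For the second bound, by \cref{lem:elliptic} the operator $\K$ (hence $1/2 - \K$) is bounded on $H^{1/2}(\Gamma) = B'$ with the case $s = 1/2$, so $\norm{g(t)}{B'} = \norm{(1/2-\K)\tilde g(t)}{B'} \le (1/2 + C_\K)\norm{\tilde g(t)}{B'}$ pointwise, and integrating in time gives $\norm{g}{B_T'} \le C \norm{\tilde g}{B_T'}$. Here I should make sure the constant $C_\K$ denoting the operator norm of $\K$ on $H^{1/2}(\Gamma)$ is the one already used in the proof of \cref{thm:wellposed}; the mapping property needed is exactly the $s=1/2$ endpoint of \cref{lem:elliptic}.

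With both data bounds in hand, I would invoke \cref{thm:wellposed} to get a unique $(u,\phi) \in Q_T \times B_T$ solving \cref{prob:variational} with $\norm{u}{Q_T} + \norm{\phi}{B_T} \le C(\norm{f}{H_T'} + \norm{g}{B_T'})$, and then chain the data estimates to obtain $\norm{u}{Q_T} + \norm{\phi}{B_T} \le C(\norm{\tilde f}{H_T'} + \norm{\tilde h}{B_T} + \norm{\tilde g}{B_T'})$ with a new constant $C$ depending only on $\Omega$ and $T$ (and absorbing $C_{tr}$, $C_\K$, all of which depend only on $\Omega$). Uniqueness transfers because the data map $(\tilde f, \tilde g, \tilde h) \mapsto (f,g)$ is well-defined and \cref{prob:variational} has a unique solution for each admissible pair $(f,g)$; the equivalence with \cref{eq:model1}--\cref{eq:model6} in the weak sense is the content of \cref{rem:solution}, so the solution of \cref{prob:variational} is \emph{the} weak solution of the model problem.

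I do not expect a genuine obstacle here; the statement is essentially a bookkeeping consequence of \cref{thm:wellposed} plus the two elementary data estimates. The only point requiring a little care is the mapping property of $1/2 - \K$ on $B' = H^{1/2}(\Gamma)$ (as opposed to on $B = H^{-1/2}(\Gamma)$), but this is exactly covered by the range $s \in [-1/2,1/2]$ in \cref{lem:elliptic} at the endpoint $s = 1/2$, so no new argument is needed. One could also phrase the whole proof in a single line as ``apply \cref{thm:wellposed} with $f$ and $g$ as in \cref{rem:solution} and use \cref{lem:elliptic} and the trace inequality to bound the data,'' but spelling out the pointwise-in-$t$ estimates makes the dependence of the constant on $\Omega$ and $T$ transparent.
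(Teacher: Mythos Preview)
Your proposal is correct and follows exactly the approach of the paper, which simply says the result follows from \cref{thm:wellposed} together with \cref{eq:f} and \cref{eq:g}; you have merely spelled out the two data estimates in detail. One small slip: the boundedness of $\K$ on $H^{1/2}(\Gamma)$ corresponds to $s=0$ in \cref{lem:elliptic} (since $\K:H^{s+1/2}(\Gamma)\to H^{s+1/2}(\Gamma)$), not $s=1/2$, but the needed mapping property is indeed covered.
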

\begin{proof}
 This follows directly from \cref{thm:wellposed} with~\cref{eq:f} and \cref{eq:g}.
\end{proof}

\section{Galerkin approximation} \label{sec:galerkin}
Let $H^h \subset H^1(\Omega)$ and $B^h \subset H^{-1/2}(\Omega)$ be finite dimensional 
subspaces. Similar as before, we define corresponding Bochner spaces $H^h_T = L^2(0,T;H^h)$ 
and $B_T^h = L^2(0,T;B^h)$ and
the corresponding energy space is denoted by $Q^h_T = \set{v_h \in H^1(0,T;H^h)}{v_h(0)=0}$. 
Then we consider the following Galerkin approximation of \cref{prob:variational}.
\begin{problem}[Semi-discrete problem] 
\label{prob:semidiscrete}
Find $u_h \in Q^h_T$ and $\phi_h \in B^h_T$ such that 
\begin{align}
\product{\dt u_h(t)}{v_h}_\Omega + \product{\nabla u_h(t)}{\nabla v_h}_\Omega 
- \product{\phi_h(t)}{v_h}_{\Gamma} &= \dual{f(t)}{v_h}_\Omega \label{eq:vp1h}\\
\product{(1/2-\K) u_h(t)}{\psi_h}_{\Gamma} + \product{\V \phi_h(t)}{\psi_h}_{\Gamma} 
&= \product{g(t)}{\psi_h}_{\Gamma} \label{eq:vp2h}
\end{align}
for all test functions $v_h \in H^h$ and $\psi_h \in B^h$, and for a.e. $t \in [0,T]$. 
\end{problem}
The analysis of this Galerkin approximation can be carried out with similar arguments as used 
in~\cite{Costabel:1990} and~\cite{MacCamy:1987}. Hence we make use of \cref{lem:elliptic}
to get rid of the smoothness assumption on $\Gamma$.
For convenience of the reader and later reference, 
we briefly state the main results and sketch the basic ideas of their proofs.
Due to \cref{lem:elliptic}, the well-posedness of the above problem follows 
again by standard energy arguments.
\begin{lemma} \label{lem:wellposedh}
Let~\cref{as:A1} hold. Then \cref{prob:semidiscrete} has a unique solution.
Moreover, 
\begin{align}
 \label{eq:energyh}
\norm{u_h}{H_T} + \norm{\phi_h}{B_T} \le C \big( \norm{f}{H_T'} + \norm{g}{B_T'} \big)
\end{align}
with a constant $C>0$ that is independent of the data $f$, $g$ and the spaces $H^h,B^h$.
\end{lemma}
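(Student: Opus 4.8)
The plan is to mirror the proof of \cref{thm:wellposed}, but now at the finite-dimensional level, where the argument becomes more elementary. The first step is to eliminate the boundary unknown. Since $B^h \subset H^{-1/2}(\Gamma)$ is finite dimensional and $\V$ is elliptic on $H^{-1/2}(\Gamma)$ by \cref{lem:elliptic}, the discrete operator $\V_h : B^h \to (B^h)'$ defined by $\dual{\V_h \psi_h}{\chi_h}_\Gamma = \dual{\V \psi_h}{\chi_h}_\Gamma$ is symmetric positive definite, hence invertible on $B^h$. Thus from~\cref{eq:vp2h} we may write $\phi_h(t) = \S_h u_h(t) + \R_h g(t)$ for suitable discrete operators $\S_h, \R_h$ (the discrete analogues of $\S=\V^{-1}(\K-1/2)$ and $\R=\V^{-1}$), and substitute this into~\cref{eq:vp1h} to obtain a reduced problem for $u_h$ alone: find $u_h \in Q^h_T$ with
\begin{align*}
\product{\dt u_h(t)}{v_h}_\Omega + \tilde a_h(u_h(t),v_h) = \dual{f(t)}{v_h}_\Omega + \dual{\R_h g(t)}{v_h}_\Gamma
\end{align*}
for all $v_h \in H^h$ and a.e.\ $t$, with $\tilde a_h(u_h,v_h) := \product{\nabla u_h}{\nabla v_h}_\Omega - \dual{\S_h u_h}{v_h}_\Gamma$.

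The second step is to observe that the G\r{a}rding inequality for $\tilde a_h$ is inherited from the continuous one \emph{without any compatibility condition between $H^h$ and $B^h$}. Indeed, given $v_h \in H^h$, apply the G\r{a}rding inequality of \cref{lem:elliptic} with the pair $(v_h,\psi)$ where $\psi := \V_h^{-1}(\K - 1/2)v_h \in B^h$; since this $\psi$ lies in $B^h$, the mixed terms cancel exactly as in the proof of \cref{thm:wellposed}, giving $\tilde a_h(v_h,v_h) + \product{v_h}{v_h}_\Omega \ge \alpha \norm{v_h}{H^1(\Omega)}^2$ with the \emph{same} constant $\alpha$. Hence the reduced semi-discrete problem is uniformly parabolic on $H^h$, and since $H^h$ is finite dimensional it is equivalent to a linear system of ODEs; existence and uniqueness of $u_h \in Q^h_T$ follow from the Picard--Lindelöf theorem (or from the finite-dimensional version of the results on variational evolution problems cited after~\cref{eq:reduced}). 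Once $u_h$ is known, $\phi_h$ is determined uniquely by $\phi_h = \S_h u_h + \R_h g \in B^h_T$.

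The third step is the a priori bound~\cref{eq:energyh}. Test the reduced equation with $v_h = u_h(t)$, use the G\r{a}rding inequality, and absorb the $\product{u_h}{u_h}_\Omega$ term with a Gronwall argument over $[0,T]$; the right-hand side is controlled by $\norm{f}{H_T'} + \norm{g}{B_T'}$ using the trace inequality and the boundedness of $\R_h$ uniformly in $h$ (which follows from $\dual{\V_h^{-1} w_h}{w_h}$-type estimates together with the ellipticity constant $C_\V$, independent of $h$). This yields the bound on $\norm{u_h}{H_T}$. For $\phi_h$, one argues exactly as at the end of the proof of \cref{thm:wellposed}: test~\cref{eq:vp2h} with $\psi_h = \phi_h(t)$, use ellipticity of $\V$, the trace inequality, and boundedness of $\K$ to get $C_\V \norm{\phi_h(t)}{H^{-1/2}(\Gamma)}^2 \le \big((1/2+C_\K)C_{tr}\norm{u_h(t)}{H^1(\Omega)} + \norm{g(t)}{H^{1/2}(\Gamma)}\big)\norm{\phi_h(t)}{H^{-1/2}(\Gamma)}$, then integrate in time and insert the bound on $\norm{u_h}{H_T}$. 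The main point, and the only place requiring care, is the uniformity of all constants in $h$; this is guaranteed because every estimate is ultimately traced back to \cref{lem:elliptic}, whose constants depend only on $\Omega$, and to the trace inequality, and none of the arguments invokes an inverse estimate or a discrete inf--sup condition. I do not expect any genuine obstacle here — the proof is essentially the finite-dimensional shadow of \cref{thm:wellposed}.
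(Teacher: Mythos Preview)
Your proposal is correct and follows essentially the same approach as the paper's proof: eliminate $\phi_h$ via the discrete operators $\S_h,\R_h$ (well-defined by Lax--Milgram from the ellipticity of $\V$), transfer the G\r{a}rding inequality to $\tilde a_h$ by choosing $\psi = \S_h v_h \in B^h$ in \cref{lem:elliptic}, invoke standard parabolic theory for the reduced problem, and recover the bound on $\phi_h$ from~\cref{eq:vp2h} exactly as you describe. The paper defers the energy estimate for $u_h$ to the abstract results in \cite{Dautray:1992-5,Evans:2010-book} rather than spelling out the Gronwall step, but this is a presentational difference only.
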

\begin{proof}
We proceed with similar arguments as in the proof of \cref{thm:wellposed}:
First, we use~\cref{eq:vp2h} to express $\phi_h(t) = \S_h u_h(t) + \R_h g(t)$, 
where $\S_h:H^h \to B^h$ is defined by 
\begin{align}
 \label{eq:Sh}
\dual{\V \S_h u_h}{\psi_h}_{\Gamma} 
= \dual{(\K-1/2) u_h}{\psi_h}_{\Gamma} \qquad \text{for all } \psi_h \in B^h. 
\end{align}
and $\R_h : H^{1/2}(\Gamma) \to B^h$ is defined by 
\begin{align}
 \label{eq:Rh}
\dual{\V \R_h g}{\psi_h}_{\Gamma} = \dual{g}{\psi_h}_{\Gamma} \qquad \text{for all } \psi_h \in B^h.  
\end{align}
Due to the Lax-Milgram Lemma both equations~\cref{eq:Sh}--\cref{eq:Rh}
have unique solutions since 
$\V:H^{-1/2}(\Gamma)\to H^{1/2}(\Gamma)$ is bounded and elliptic,
and $H^h\subset H$ and $B^h\subset B$ are finite dimensional and thus complete subspaces. 
Hence, $\S_h$ and $\R_h$ are well-defined.
Furthermore, it directly follows that $\norm{\R_h g}{H^{-1/2}(\Gamma)} \le C_\V^{-1} \norm{g}{H^{1/2}(\Gamma)}$. 
Then~\cref{eq:vp1h} can again be reduced to an ordinary differential equation
\begin{align}
 \label{eq:reducedproblemh} 
\product{\dt u_h(t)}{v_h}_\Omega + \tilde a_h(u_h(t),v_h) = \dual{f(t)}{v_h}_\Omega 
+ \product{\R_h g(t)}{v_h}_{\Gamma} 
\end{align}
with bilinear form $\tilde a_h(u_h,v_h) = \product{\nabla u_h}{\nabla v_h}_\Omega 
- \product{\S_h u_h}{v_h}_{\Gamma}$. Using \cref{lem:elliptic} with $u=v=u_h$ and
$\phi=\psi=\psi_h=\S_h u_h$, where $\S_h$ is defined by~\cref{eq:Sh}, we  
obtain for all $u_h\in H^h$ that  
\begin{align}
 \label{eq:ahgarding}
\tilde a_h(u_h,u_h) + \product{u_h}{u_h}_\Omega 
= a(u_h,\psi_h;u_h,\psi_h)+ \product{u_h}{u_h}_\Omega 
\ge \alpha \norm{u_h}{H^1(\Omega)}^2. 
\end{align}
Existence and uniqueness of a solution to the reduced problem~\cref{eq:reducedproblemh} 
and the estimates for $\norm{u_h}{H_T}$ can again be obtained from the abstract 
results of~\cite{Dautray:1992-5,Evans:2010-book}. 
To estimate $\norm{\phi_h(t)}{H^{-1/2}(\Gamma)}$ 
we use~\cref{eq:vp2h} and the same arguments as in the proof of \cref{thm:wellposed} and get
\begin{align}
 \label{eq:energyphih}
 C_\V\norm{\phi_h(t)}{H^{-1/2}(\Gamma)}
 \le (1/2+C_\K) C_{tr} \norm{u_h(t)}{H^1(\Omega)} 
+ \norm{g(t)}{H^{1/2}(\Gamma)}.    
\end{align}
\end{proof}
In order to obtain a uniform estimate also for the time derivative $\dt u_h$, 
which is not included in~\cref{eq:energyh}, we proceed with similar arguments 
as~\cite{MacCamy:1987,Costabel:1990}. 
Let $P_h : L^2(\Omega) \to H^h$ denote the $L^2$-orthogonal projection defined by
\begin{align}\label{eq:projection}
\product{P_h v}{w_h}_\Omega = \product{v}{w_h}_\Omega \qquad \text{for all } w_h \in H^h.
\end{align} 
We will assume that the $L^2$-projection $P_h$ is stable in $H^1(\Omega)$, i.e., 
there exists a constant $C_P>0$ such that 
\begin{align}
 \label{as:A2}
 \tag{A2}
 \norm{P_h v}{H^1(\Omega)} \le C_P \norm{v}{H^1(\Omega)} \text{ for all }v \in H^1(\Omega).
\end{align}
This imposes a mild condition on the approximation 
space $H^h$, which is not very restrictive in practice; see \cref{sec:fembem} for an example.
Property (A2) and equation~\cref{eq:vp1h} can now be used 
to deduce a uniform bound for the norm $\norm{\dt u_h}{H^1(\Omega)'}$ of the time derivative
and the following energy estimate.
\begin{lemma}[Discrete energy estimate] \label{lem:energyh} 
Let~\cref{as:A1}--\cref{as:A2} hold. Then 
\begin{align*}
\norm{u_h}{Q_T} + \norm{\phi_h}{B_T} \le C \big( \norm{f}{H_T'} + \norm{g}{B_T'} \big)
\end{align*}
with a constant $C>0$ independent of $f,g$ and the approximation spaces $H^h$ and $B^h$.
\end{lemma}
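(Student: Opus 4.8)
The plan is to obtain the missing bound for $\norm{\dt u_h}{H_T'}$ directly from equation~\cref{eq:vp1h}, and then to reuse the estimates of \cref{lem:wellposedh} for the other two terms. Fix $t$ and let $v \in H^1(\Omega)$ be arbitrary. The difficulty is that $v$ need not lie in $H^h$, so we cannot test~\cref{eq:vp1h} with $v$ itself; instead we test with $P_h v \in H^h$. Since $\product{\dt u_h(t)}{P_h v}_\Omega = \product{\dt u_h(t)}{v}_\Omega$ by the definition~\cref{eq:projection} of the $L^2$-projection (note $\dt u_h(t) \in H^h \subset L^2(\Omega)$), we get
\begin{align*}
\dual{\dt u_h(t)}{v}_\Omega
= \dual{f(t)}{P_h v}_\Omega - \product{\nabla u_h(t)}{\nabla P_h v}_\Omega + \dual{\phi_h(t)}{P_h v}_{\Gamma}.
\end{align*}
Now I would bound each term on the right: the first by $\norm{f(t)}{H'}\norm{P_h v}{H} \le C_P \norm{f(t)}{H'}\norm{v}{H}$ using~\cref{as:A2}; the second by $\norm{u_h(t)}{H}\norm{P_h v}{H} \le C_P \norm{u_h(t)}{H}\norm{v}{H}$; and the third by the trace inequality, $\dual{\phi_h(t)}{P_h v}_\Gamma \le C_{tr}\norm{\phi_h(t)}{H^{-1/2}(\Gamma)}\norm{P_h v}{H} \le C_P C_{tr}\norm{\phi_h(t)}{H^{-1/2}(\Gamma)}\norm{v}{H}$. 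Dividing by $\norm{v}{H}$ and taking the supremum over $v$ gives the pointwise-in-time estimate
\begin{align*}
\norm{\dt u_h(t)}{H'} \le C\big(\norm{f(t)}{H'} + \norm{u_h(t)}{H} + \norm{\phi_h(t)}{H^{-1/2}(\Gamma)}\big).
\end{align*}

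Squaring and integrating over $[0,T]$ then yields $\norm{\dt u_h}{H_T'} \le C(\norm{f}{H_T'} + \norm{u_h}{H_T} + \norm{\phi_h}{B_T})$. Combining this with \cref{lem:wellposedh}, which controls $\norm{u_h}{H_T} + \norm{\phi_h}{B_T}$ by $C(\norm{f}{H_T'} + \norm{g}{B_T'})$, immediately gives the asserted bound for $\norm{u_h}{Q_T} + \norm{\phi_h}{B_T} = \norm{u_h}{H_T} + \norm{\dt u_h}{H_T'} + \norm{\phi_h}{B_T}$ (up to equivalence of the norm on $Q_T$), with a constant depending only on $\Omega$, $T$, the stability constant $C_P$, and the trace and mapping constants — in particular independent of $H^h$ and $B^h$.

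The only genuine obstacle in this argument is the use of~\cref{as:A2}: without $H^1$-stability of $P_h$ the bound on $\norm{\nabla u_h(t)}{}\,\norm{\nabla P_h v}{}$ and on the right-hand-side term would fail to be uniform in $h$, which is precisely why assumption~\cref{as:A2} is imposed. Everything else is a routine application of the trace inequality, the Cauchy--Schwarz inequality in the $t$-variable, and the previously established energy estimate; I would present it in that order and keep the constants symbolic throughout.
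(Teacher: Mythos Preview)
Your proof is correct and follows essentially the same approach as the paper: test \cref{eq:vp1h} with $P_h v$, use $\product{\dt u_h(t)}{v}_\Omega = \product{\dt u_h(t)}{P_h v}_\Omega$, bound the right-hand side via Cauchy--Schwarz, the trace inequality, and the $H^1$-stability~\cref{as:A2} of $P_h$, then integrate in time and invoke \cref{lem:wellposedh}. The paper presents the same argument, only starting from the dual-norm supremum~\cref{eq:dualnorm} rather than writing out the tested equation first; the content is identical.
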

\begin{proof}
By definition of the dual norm and the $L^2$-projection, we obtain
\begin{align}
 \label{eq:dualnorm}
\norm{\dt u_h(t)}{H^{1}(\Omega)'} 
&= \sup_{0\not= v \in H^1(\Omega)} \frac{\product{\dt u_h(t)}{v}_\Omega}{\norm{v}{H^1(\Omega)}}
 = \sup_{0\not= v \in H^1(\Omega)} \frac{\product{\dt u_h(t)}{P_h v}_\Omega}{\norm{v}{H^1(\Omega)}}.
\end{align}
Using equation~\cref{eq:vp1h}, the Cauchy-Schwarz inequality, and the trace inequality, 
one can further estimate 
\begin{align*}
\product{\dt u_h(t)}{P_h v}_\Omega \le \big(\norm{u_h(t)}{H^1(\Omega)} 
+ C_{tr} \norm{\phi_h(t)}{H^{-1/2}(\Omega)} + \norm{f(t)}{H^{1}(\Omega)'} \big) \norm{P_h v}{H^1(\Omega)}.
\end{align*}
Therefore, assumption~\cref{as:A2} yields 
\begin{align*}
\norm{\dt u_h(t)}{H^{1}(\Omega)'} 
 \le C \big( \norm{u_h(t)}{H^1(\Omega)} + \norm{\phi_h(t)}{H^{-1/2}(\Gamma)} 
 + \norm{f(t)}{H^{1}(\Omega)'} \big).
\end{align*}
Then the assertion of the lemma 
follows by integration over time and combination with the 
estimates~\cref{eq:energyh} for $\norm{u_h}{H_T}$ and $\norm{\phi_h}{B_T}$ 
stated in \cref{lem:wellposedh}.
\end{proof}
By combination of the previous lemmas and the variational problems defining 
the continuous and the semi-discrete solution, we now obtain the following result.
\begin{theorem}[Quasi-best-approximation] 
 \label{thm:quasioptimality} 
Let~\cref{as:A1}--\cref{as:A2} hold. Furthermore, $(u,\phi) \in Q_T \times B_T$ 
and $(u_{h},\phi_{h}) \in Q_T^{h} \times B_T^{h}$ denote the solutions of
\cref{prob:variational} and \cref{prob:semidiscrete}, respectively.
Then there holds that
\begin{align*}
\norm{u - u_h}{Q_T} + \norm{\phi-\phi_h}{B_T} \le C \big( \norm{u - \tilde u_h}{Q_T} 
+ \norm{\phi-\tilde \phi_h}{B_T} \big)
\end{align*}
for all functions $\tilde u_h \in Q_T^h$ and $\tilde \phi_h \in B_T^h$ with a
constant $C>0$ which is independent of the problem data $f,g$ and of the spaces $H^h$ and $B^h$.
\end{theorem}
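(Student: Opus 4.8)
The plan is to run a standard Strang-type argument that reduces the estimate to the discrete stability result of \cref{lem:energyh}. Fix arbitrary $\tilde u_h \in Q_T^h$ and $\tilde\phi_h \in B_T^h$ and split the errors as $u - u_h = \eta_u + \theta_u$ and $\phi - \phi_h = \eta_\phi + \theta_\phi$ with approximation errors $\eta_u := u - \tilde u_h$, $\eta_\phi := \phi - \tilde\phi_h$ and discrete components $\theta_u := \tilde u_h - u_h \in Q_T^h$, $\theta_\phi := \tilde\phi_h - \phi_h \in B_T^h$. Since $u(0) = u_h(0) = \tilde u_h(0) = 0$, we have $\theta_u(0) = 0$, so $\theta_u$ is an admissible discrete trajectory.

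First I would subtract \cref{eq:vp1h}--\cref{eq:vp2h} from \cref{eq:vp1}--\cref{eq:vp2}, tested with $v_h \in H^h$ and $\psi_h \in B^h$ --- note that the $L^2$-products in the semi-discrete problem agree with the duality pairings here, since the discrete quantities are regular enough --- to obtain the Galerkin orthogonality. Rearranging shows that $(\theta_u,\theta_\phi)$ solves precisely \cref{prob:semidiscrete} with $f,g$ replaced by the functionals $f_\eta(t) \in H'$, $g_\eta(t) \in B'$ acting as
\begin{align*}
\dual{f_\eta(t)}{v_h}_\Omega &= -\dual{\dt\eta_u(t)}{v_h}_\Omega - \product{\nabla\eta_u(t)}{\nabla v_h}_\Omega + \dual{\eta_\phi(t)}{v_h}_\Gamma, \\
\dual{g_\eta(t)}{\psi_h}_\Gamma &= -\dual{(1/2-\K)\eta_u(t)}{\psi_h}_\Gamma - \dual{\V\eta_\phi(t)}{\psi_h}_\Gamma.
\end{align*}
Using the boundedness of $\V$ and $\K$ from \cref{lem:elliptic} and the trace inequality, I would check that $\norm{f_\eta}{H_T'} \le C(\norm{\eta_u}{Q_T} + \norm{\eta_\phi}{B_T})$ and $\norm{g_\eta}{B_T'} \le C(\norm{\eta_u}{H_T} + \norm{\eta_\phi}{B_T})$. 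Then \cref{lem:energyh} applied to $(\theta_u,\theta_\phi)$ (which by \cref{lem:wellposedh} is the unique discrete solution for this data) gives $\norm{\theta_u}{Q_T} + \norm{\theta_\phi}{B_T} \le C(\norm{\eta_u}{Q_T} + \norm{\eta_\phi}{B_T})$, and the claim follows from the triangle inequality $\norm{u-u_h}{Q_T} + \norm{\phi-\phi_h}{B_T} \le \norm{\eta_u}{Q_T} + \norm{\theta_u}{Q_T} + \norm{\eta_\phi}{B_T} + \norm{\theta_\phi}{B_T}$ and the arbitrariness of $\tilde u_h,\tilde\phi_h$.

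The only delicate point is the estimate of the perturbation functional: one must verify that $f_\eta$ is genuinely an element of $H_T'$, and it is precisely the term $\dual{\dt\eta_u}{v_h}_\Omega$ that forces the full $Q_T$-norm of $\eta_u$ --- hence the $Q_T$-norm on the right-hand side of the theorem --- rather than only the $H_T$-norm; the $B_T'$-bound for $g_\eta$ is immediate from the mapping properties in \cref{lem:elliptic}. Beyond this bookkeeping there is no genuine obstacle, since this energy-based argument deliberately avoids the elliptic projection and the $L^2$ duality estimates that are unavailable for the non-symmetric coupling.
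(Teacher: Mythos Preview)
Your proposal is correct and follows essentially the same route as the paper: split the error via the triangle inequality, use Galerkin consistency to recognise the discrete component $(\theta_u,\theta_\phi)$ as the solution of \cref{prob:semidiscrete} with perturbed data $F=f_\eta$, $G=g_\eta$, bound these by the mapping properties of $\V,\K$ and the trace inequality, and conclude via \cref{lem:energyh}. Your observation that the $\dt\eta_u$ term is what forces the full $Q_T$-norm on the right-hand side is exactly the point the paper leaves implicit.
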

\begin{proof}
This result was first proven in~\cite{Costabel:1990} 
for the symmetric coupling method. 
Using \cref{lem:elliptic},
their proof can be adopted to the non-symmetric coupling as well. 
For convenience of the reader and later reference, we only repeat the main arguments:
Let $\tilde u_h \in Q_T^h$ and $\tilde \phi_h \in B_T^h$ be arbitrary. 
By
\begin{align*}
\norm{u - u_h}{Q_T} &\le  \norm{u - \tilde u_h}{Q_T} + \norm{\tilde u_h - u_h}{Q_T} \qquad \text{and}\\
\norm{\phi - \phi_h}{B_T} &\le \norm{\phi - \tilde \phi_h}{B_T} + \norm{\tilde \phi_h - \phi_h}{B_T}
\end{align*}
we split the error into an \emph{approximation error} and a \emph{discrete error} component. 
The first part already appears in the final estimate.
To estimate the discrete error components we note that the discrete problem~\cref{eq:vp1h}--\cref{eq:vp2h} 
is consistent with the continuous problem~\cref{eq:vp1}--\cref{eq:vp2}.
Hence, we may write the discrete error components 
$w_h = \tilde u_h - u_h$ and $\rho_h = \tilde \phi_h - \phi_h$ as the solution of the system
\begin{align}
 \label{eq:discerror1}
\product{\dt w_h(t)}{v_h}_\Omega + \product{\nabla w_h(t)}{\nabla v_h}_\Omega 
- \product{\rho_h(t)}{v_h}_{\Gamma} &= \product{F(t)}{v_h}_\Omega\\
 \label{eq:discerror2}
\product{(1/2-\K) w_h(t)}{\psi_h}_{\Gamma} + \product{\V \rho_h(t)}{\psi_h}_{\Gamma} 
&= \dual{G(t)}{\psi_h}_{\Gamma}
\end{align}
for all $v_h\in H^h$ and $\psi_h\in B^h$ with the right-hand sides $F(t)$ and $G(t)$ defined by 
\begin{align*}
\dual{F(t)}{v}_{\Omega} 
  &:= \dual{\dt \tilde u_h(t) - \dt u(t)}{v}_\Omega 
  + \product{\nabla \tilde u_h(t) - \nabla u(t)}{\nabla v}_\Omega 
  - \dual{\tilde \phi_h(t) - \phi(t)}{v}_{\Gamma},\\
\dual{G(t)}{\psi}_{\Gamma} 
  &:= \dual{(1/2-\K) (\tilde u_h(t) - u(t))}{\psi}_{\Gamma} 
  + \dual{\V (\tilde \phi_h(t) - \phi(t))}{\psi}_{\Gamma}.
\end{align*}
for all $v\in H$ and $\psi\in B$.
With the bounds from the integral and trace operators, 
the Cauchy-Schwarz inequality, and integrating with respect to time, one can see that
\begin{align*}
\norm{F}{L^2(0,T;H^1(\Omega)')} 
&\le  C \big( \norm{u - \tilde u_h}{Q_T} + \norm{\phi - \tilde \phi_h}{B_T}\big)  \\
\norm{G}{L^2(0,T;H^{1/2}(\Gamma))} 
& \le C \big( \norm{u - \tilde u_h}{H_T} + \norm{\phi - \tilde \phi_h}{B_T}\big).
\end{align*}
Note that the system~\cref{eq:discerror1}--\cref{eq:discerror2} with the right-hand sides $F$ and $G$ 
has the same form as~\cref{eq:vp1h}--\cref{eq:vp2h}.
Therefore, \cref{lem:energyh} applies and finally shows that
\begin{align*}
\norm{\tilde u_h - u_h}{Q_T} + \norm{\tilde \phi_h - \phi_h}{B_T} 
\le C \big( \norm{u - \tilde u_h}{Q_T} + \norm{\phi - \tilde \phi_h}{B_T} \big).
\end{align*}
Together with the error splitting this completes the proof.
\end{proof}
\begin{remark} 
As a direct consequence of \cref{thm:quasioptimality}, we also obtain 
\begin{align*}
\norm{u - u_h}{Q_T} + \norm{\phi-\phi_h}{B_T} \le C \big( \norm{u - P_h u}{Q_T} 
+ \norm{\phi-\Pi_h \phi}{B_T} \big), 
\end{align*}
where $P_h:H^1(\Omega)\to H^h$ is the  $L^2(\Omega)$ projection operator introduced 
in~\cref{eq:projection}, $\Pi_h:H^{-1/2}(\Gamma)\to B_h$ is the $H^{-1/2}(\Gamma)$-projection 
operator, and $C>0$.
This allows us to obtain explicit error bounds for particular choices of 
approximation spaces by using interpolation error estimates in the energy spaces; 
see \cref{sec:fembem} for an example.
\end{remark}

\section{Time discretization} \label{sec:time}

For the time discretization of the Galerkin approximation,
we consider a particular one-step method that allows us to 
establish quasi-optimality of a fully discrete scheme under minimal regularity assumptions.
Let us note that a similar method was used in~\cite[Sec. 4.1.]{Tantardini:2014-1} for the discretization
of a parabolic problem.
First of all, we introduce some notation which we need to formulate our time discretization scheme.
Let $0=t^0 < t^1 < \ldots < t^N = T$, $N\in\mathbb{N}$ be a partition of the time interval $[0,T]$. 
Further, we denote by $\tau^n = t^n-t^{n-1}$ the local time step sizes and 
set $\tau := \max_{n=1,\ldots,N } \tau^{n}$.

In this section we search for approximations $u_{h,\tau} \in Q_T^{h,\tau}$
and $\phi_{h,\tau} \in B_T^{h,\tau}$ with 
\begin{align*}
Q_T^{h,\tau} 
&:=\set{u  \in C(0,T;H^h)}{u(0) = 0, u|_{[t^{n-1},t^n]} \text{ is linear in } t} \qquad \text{and} \\
B_T^{h,\tau} 
&:= \set{\phi  \in L^2(0,T;B^h)}{\phi|_{(t^{n-1},t^n]} \text{ is constant in } t}.
\end{align*}
Furthermore, for sufficiently regular functions in $t$, we denote by $v^n = v(t^n)$ 
the values at the grid points.  
For $u_{h,\tau}\in Q_T^{h,\tau}$ the operator $\dt$ has to be understood
piecewise with respect to the time mesh, in particular,
\begin{align}
 \label{eq:discderivative}
 \dt u_{h,\tau}|_{(t^{n-1},t^n)}=\dtau u_{h,\tau}^n
 \qquad\text{with}\qquad\dtau u_{h,\tau}^n := \frac{1}{\tau^n} (u_{h,\tau}^n - u_{h,\tau}^{n-1}).
\end{align}
We further introduce weighted averages
\begin{align} \label{eq:hatv}
\widehat{v}^n = \frac{1}{\tau^n} \int_{t^{n-1}}^{t^n} v(t) \omega^n(t)\,dt
\qquad\text{with }\omega^n(t)=\frac{6t-2t^n-4t^{n-1}}{\tau^n}
\end{align}
and define our fully discrete system as follows:
\begin{problem}[Full discretization] 
\label{prob:fullydiscreteweight} 
Find $u_{h,\tau} \in Q_T^{h,\tau}$ and $\phi_{h,\tau} \in B_T^{h,\tau}$ such that 
\begin{align}
 \label{eq:vp1htauweight}
\product{\widehat{\dt u}_{h,\tau}^n}{v_h}_\Omega 
+ \product{\widehat{\nabla u}_{h,\tau}^n}{\nabla v_h}_\Omega 
- \product{\widehat{\phi}_{h,\tau}^n}{v_h}_{\Gamma} &= \dual{\widehat{f}^n}{v_h}_\Omega,\\
\label{eq:vp2htauweight}
\product{(1/2-\K) \widehat{u}_{h,\tau}^n} {\psi_h}_{\Gamma} 
+ \product{\V\widehat{\phi}_{h,\tau}^n}{\psi_h}_{\Gamma} &= \product{\widehat{g}^n}{\psi_h}_{\Gamma} 
\end{align}
for all $v_h \in H^h \subset H^1(\Omega)$ and $\psi_h \in B^h \subset H^{-1/2}(\Gamma)$
 and for all $1 \le n \le N$.
\end{problem}
\begin{remark}
 \label{rem:classicalEuler}
 We have chosen the piecewise linear weight function $\omega^n(t)$ in~\cref{eq:hatv} such that
 for all $n\in\mathbb{N}$, $u_{h,\tau} \in Q_T^{h,\tau}$, and $\phi_{h,\tau} \in B_T^{h,\tau}$ 
 there holds that
 \begin{align}
  \label{eq:identities}
 \widehat{u}_{h,\tau}^n = u_{h,\tau}^n, \qquad 
 \widehat{\dt u}_{h,\tau}^n =  \dtau u_{h,\tau}^n= \frac{1}{\tau^n} (u_{h,\tau}^n - u_{h,\tau}^{n-1}), 
 \quad \text{ and} \quad
 \widehat{\phi}_{h,\tau}^n = \phi_{h,\tau}^n.
 \end{align}
 Thus the discrete system \cref{prob:fullydiscreteweight} is equivalent to 
 \begin{align}
  \label{eq:vp1htau}
 \product{\dtau u_{h,\tau}^n}{v_h}_\Omega 
 + \product{\nabla u_{h,\tau}^n}{\nabla v_h}_\Omega 
 - \product{\phi_{h,\tau}^n}{v_h}_{\Gamma} &= \dual{\widehat{f}^n}{v_h}_\Omega,\\
 \label{eq:vp2htau}
 \product{(1/2-\K) u_{h,\tau}^n} {\psi_h}_{\Gamma} 
 + \product{\V\phi_{h,\tau}^n}{\psi_h}_{\Gamma} &= \product{\widehat{g}^n}{\psi_h}_{\Gamma} 
 \end{align}
 for all $v_h \in H^h \subset H^1(\Omega)$ and $\psi_h \in B^h \subset H^{-1/2}(\Gamma)$,
  and for all $1 \le n \le N$.
 Hence, the fully discrete scheme \cref{prob:fullydiscreteweight}
 amounts to a discretization of \cref{prob:semidiscrete} 
 in time by a variant of the implicit Euler method, 
 i.e., it differs only in the right-hand side which is 
 treated in a special way in 
 order to reduce the regularity requirements on the data.
  An error analysis of the coupling with 
  the classical implicit Euler scheme and other
  time discretizations in the natural energy norm is also possible. However, one needs the usual
  Taylor expansions and therefore some regularity on the data $\tilde f$, $\tilde g$, $\tilde h$, 
  and the solution. 
\end{remark}
\begin{remark}
 \label{rem:consistency}
 By testing~\cref{eq:vp1}--\cref{eq:vp2} with $v=v_h$ and $\psi=\psi_h$, 
 multiplication with the weight function $\omega^n$, and 
 integration over the time interval $[t^{n-1},t^n]$, one can see that  
 \begin{align*}
 \dual{\widehat{\dt u}^n\!}{v_h}_\Omega + \product{\widehat{\nabla u}^n\!}{\nabla v_h}_\Omega 
 - \dual{\widehat{\phi}^n}{v_h}_{\Gamma} &= \dual{\widehat{f}^n}{v_h}_\Omega,\\
 \dual{(1/2-\K) \widehat{u}^n}{\psi_h}_{\Gamma} + \dual{\V  \widehat{\phi}^n}{\psi_h}_{\Gamma} 
 &= \dual{\widehat{g}^n}{\psi_h}_{\Gamma}
 \end{align*}
 for all $v_{h} \in H^h$, $\psi_h \in B^h$, and all $1 \le n \le N$. 
 This shows that the fully discrete scheme~\cref{eq:vp1htauweight}--\cref{eq:vp2htauweight} 
 is a Petrov-Galerkin approximation and thus is
 consistent with the variational problem~\cref{eq:vp1}--\cref{eq:vp2}.
\end{remark}
In the following, we derive error estimates for the fully discrete scheme in the energy norm
by an extension of our arguments for the analysis of the Galerkin semi-discretization.
Let us start with establishing the corresponding fully discrete energy estimate.
\begin{lemma}[Well-posedness] 
\label{lem:wellposedhtau}
Let~\cref{as:A1} hold and $\tau \le 1/4$. 
Then for any $f\in H'_T$ and $g\in B'_T$, \cref{prob:fullydiscreteweight} admits a unique solution and 
\begin{align}
 \label{eq:energyht}
\norm{u_{h,\tau}}{H_T} + \norm{\phi_{h,\tau}}{B_T} 
\le C e^{2N\tau} \big( \norm{f}{H_T'} + \norm{g}{B_T'}\big)
\end{align}
with a constant $C>0$ that depends only on the domain $\Omega$.
If the bilinear form $a(v,\psi;v,\psi)$ is elliptic 
there is no constant factor $e^{2N\tau}$ on the right-hand side of~\cref{eq:energyht}.
\end{lemma}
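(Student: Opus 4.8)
The plan is to run the same two-step argument as in the proof of \cref{lem:wellposedh}, now applied to the equivalent fully discrete system \cref{eq:vp1htau}--\cref{eq:vp2htau}, and to close the global bound by a discrete Gr\"onwall estimate in time. The step-size restriction $\tau\le 1/4$ will be used exactly there.

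For unique solvability I would argue time step by time step. Given $u_{h,\tau}^{n-1}$, use the ellipticity of $\V$ as in \cref{eq:Sh}--\cref{eq:Rh} to eliminate $\phi_{h,\tau}^n = \S_h u_{h,\tau}^n + \R_h \widehat{g}^n$, so that \cref{eq:vp1htau} becomes the elliptic problem of finding $u_{h,\tau}^n \in H^h$ with
\begin{align*}
\tfrac{1}{\tau^n}\product{u_{h,\tau}^n}{v_h}_\Omega + \tilde a_h(u_{h,\tau}^n,v_h)
= \dual{\widehat{f}^n}{v_h}_\Omega + \product{\R_h \widehat{g}^n}{v_h}_\Gamma + \tfrac{1}{\tau^n}\product{u_{h,\tau}^{n-1}}{v_h}_\Omega
\qquad\text{for all }v_h \in H^h ,
\end{align*}
with $\tilde a_h$ as in the proof of \cref{lem:wellposedh}. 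By \cref{eq:ahgarding}, for $v_h \in H^h$ the bilinear form on the left is bounded below by $(\tfrac{1}{\tau^n}-1)\norm{v_h}{L^2(\Omega)}^2 + \alpha\norm{v_h}{H^1(\Omega)}^2 \ge \alpha\norm{v_h}{H^1(\Omega)}^2$ since $\tau^n \le \tau \le 1/4 \le 1$; together with continuity, the Lax--Milgram lemma yields a unique $u_{h,\tau}^n$, hence a unique $\phi_{h,\tau}^n \in B^h$, and starting from $u_{h,\tau}^0 = 0$ this determines $(u_{h,\tau},\phi_{h,\tau})$.

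For the energy estimate I would test \cref{eq:vp1htau} with $v_h = u_{h,\tau}^n$ and \cref{eq:vp2htau} with $\psi_h = \phi_{h,\tau}^n$ and add; by definition of $a$ the left-hand side equals $\product{\dtau u_{h,\tau}^n}{u_{h,\tau}^n}_\Omega + a(u_{h,\tau}^n,\phi_{h,\tau}^n;u_{h,\tau}^n,\phi_{h,\tau}^n)$. Writing $b_n := \norm{u_{h,\tau}^n}{L^2(\Omega)}^2$ and using the polarization bound $\product{u_{h,\tau}^n - u_{h,\tau}^{n-1}}{u_{h,\tau}^n}_\Omega \ge \tfrac12(b_n - b_{n-1})$, the G\r{a}rding inequality of \cref{lem:elliptic}, and Young's inequality on $\dual{\widehat{f}^n}{u_{h,\tau}^n}_\Omega + \product{\widehat{g}^n}{\phi_{h,\tau}^n}_\Gamma$ to absorb the $H^1(\Omega)$- and $H^{-1/2}(\Gamma)$-terms, multiplication by $2\tau^n$ gives
\begin{align*}
(1-2\tau^n)\,b_n + \tau^n\alpha\big(\norm{u_{h,\tau}^n}{H^1(\Omega)}^2 + \norm{\phi_{h,\tau}^n}{H^{-1/2}(\Gamma)}^2\big)
\le b_{n-1} + C\tau^n\big(\norm{\widehat{f}^n}{H'}^2 + \norm{\widehat{g}^n}{B'}^2\big).
\end{align*}
Since $\tau^n \le 1/4$ we have $(1-2\tau^n)^{-1} \le e^{4\tau^n}$, so discarding the nonnegative second term and iterating (with $b_0=0$) is a discrete Gr\"onwall argument bounding $\max_n b_n$ by $Ce^{4N\tau}\sum_n\tau^n(\norm{\widehat{f}^n}{H'}^2 + \norm{\widehat{g}^n}{B'}^2)$; reinserting this into the summed inequality also controls $\sum_n\tau^n(\norm{u_{h,\tau}^n}{H^1(\Omega)}^2 + \norm{\phi_{h,\tau}^n}{H^{-1/2}(\Gamma)}^2)$. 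Finally $\norm{\widehat{v}^n}{X}^2 \le \tfrac{4}{\tau^n}\norm{v}{L^2(t^{n-1},t^n;X)}^2$, which follows from Cauchy--Schwarz and $\int_{t^{n-1}}^{t^n}|\omega^n(t)|^2\,dt = 4\tau^n$, turns the right-hand side into $C(\norm{f}{H_T'}^2 + \norm{g}{B_T'}^2)$; since the $H_T$- and $B_T$-norms of the piecewise-linear $u_{h,\tau}$ and the piecewise-constant $\phi_{h,\tau}$ are bounded by the nodal values, \cref{eq:energyht} follows using $N\tau \ge T$. When $a(v,\psi;v,\psi)$ is elliptic the term $\product{v}{v}_\Omega$ is absent, so there is no $b_n$ on the right above, the recursion telescopes directly, and the factor $e^{2N\tau}$ disappears.

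The routine parts are the per-step Lax--Milgram argument and the Young-inequality algebra. The points that need care — and the main obstacle — are the discrete Gr\"onwall bookkeeping (in particular checking that $\tau\le 1/4$ keeps the amplification factors $(1-2\tau^n)^{-1}$ uniformly controlled and produces the asserted $e^{2N\tau}$) and the estimate for the specially weighted data $\widehat{f}^n,\widehat{g}^n$ in terms of $f$ and $g$ in the Bochner norms without any extra temporal regularity, which is precisely the purpose of the weight $\omega^n$ in \cref{eq:hatv}.
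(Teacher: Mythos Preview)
Your argument is correct and essentially parallels the paper's, with one genuine structural difference in the energy step. The paper first eliminates $\phi_{h,\tau}^n$ via $\S_h,\R_h$, applies the G\r{a}rding inequality \cref{eq:ahgarding} for the reduced form $\tilde a_h$ to obtain a recursion for $\norm{u_{h,\tau}^n}{L^2(\Omega)}^2$ and $\sum_n\tau^n\norm{u_{h,\tau}^n}{H^1(\Omega)}^2$, and then bounds $\norm{\phi_{h,\tau}^n}{H^{-1/2}(\Gamma)}$ in a separate step from \cref{eq:vp2htau} and the ellipticity of $\V$ (exactly as in \cref{eq:energyphih}). You instead test both discrete equations and add, invoking the G\r{a}rding inequality of \cref{lem:elliptic} for the full form $a$ directly; this yields control of $u_{h,\tau}^n$ and $\phi_{h,\tau}^n$ simultaneously and avoids the separate $\phi$-step. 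Both routes rest on the same coercivity result and lead to the same conclusion; yours is slightly more streamlined, while the paper's mirrors the structure of the proof of \cref{lem:wellposedh} more closely. The remaining ingredients (the discrete Gr\"onwall step under $\tau\le 1/4$, the bound $\tau^n\norm{\widehat f^n}{H'}^2\le 4\norm{f}{L^2(t^{n-1},t^n;H')}^2$ from $\norm{\omega^n}{L^2}^2=4\tau^n$, and the passage from nodal to $H_T,B_T$ norms) match the paper's \cref{eq:energyht1}--\cref{eq:energyht3}. Your Gr\"onwall bookkeeping produces $e^{4N\tau}$ rather than the stated $e^{2N\tau}$; this is a harmless constant that can be sharpened by the paper's device of multiplying the $n$-th inequality by $(1-2\tau)^{n-1}$ before summing, and in any case is absorbed once one uses $N\tau\ge T$ only to note that the exponential factor is the sole $T$-dependent quantity.
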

\begin{proof}
We recall the notation of \cref{lem:wellposedh} with $\phi^n_{h,\tau}=\S_h u^n_{h,\tau}+\R_h \widehat g^n$
and $\tilde a_h(u^n_{h,\tau},v_h) = \product{\nabla u^n_{h,\tau}}{\nabla v_h}_\Omega 
- \product{\S_h u^n_{h,\tau}}{v_h}_{\Gamma}$,
where $\S_h$ and $\R_h$ are defined in~\cref{eq:Sh} and~\cref{eq:Rh}, respectively.
Next we rewrite the equivalent formulation \cref{eq:vp1htau}--\cref{eq:vp2htau} 
of our discrete \cref{prob:fullydiscreteweight} as
\begin{align*}
\frac{1}{\tau^{n}} \product{u_{h,\tau}^{n} - u_{h,\tau}^{n-1}}{v_h}_\Omega 
+ \tilde a_h(u_{h,\tau}^{n},v_h) = \dual{\widehat{f}^{n}}{v_h}_{\Omega} 
+ \product{\R_h\widehat{g}^{n}}{v_h}_{\Gamma}.
\end{align*}
By testing with $v_h = u_{h,\tau}^{n}$ 
and using the relation $-ab=-\frac{1}{2}a^2-\frac{1}{2} b^2+\frac{1}{2} (a-b)^2$,
we apply the Cauchy-Schwarz, trace, and Young inequalities as well as the 
G\r arding inequality~\cref{eq:ahgarding} for the bilinear form $\tilde a_h(\cdot,\cdot)$ 
to get
\begin{align*}
&\frac{1}{2\tau^n} \norm{u_{h,\tau}^n}{L^2(\Omega)}^2 
-\frac{1}{2\tau^n} \norm{u_{h,\tau}^{n-1}}{L^2(\Omega)}^2 
+ \frac{1}{2\tau^n} \norm{u_{h,\tau}^n - u_{h,\tau}^{n-1}}{L^2(\Omega)}^2 
+ \alpha \norm{u_{h,\tau}^{n}}{H^1(\Omega)}^2 \\
&\qquad\le 
 \norm{u_{h,\tau}^{n}}{L^2(\Omega)}^2 
+ \frac{\alpha}{2}\norm{u_{h,\tau}^{n}}{H^1(\Omega)}^2 
+ \frac{1}{\alpha} \norm{\widehat{f}^{n}}{H^1(\Omega)'}^2 
+ \frac{C_\V^{-1}C_{tr}^2}{\alpha} \norm{\widehat{g}^{n}}{H^{1/2}(\Gamma)}^2.
\end{align*}
%
Additionally, we have used $\norm{\R_h \widehat{g}^{n}}{H^{-1/2}(\Gamma)} \le 
C_\V^{-1} \norm{\widehat{g}^{n}}{H^{1/2}(\Gamma)}$
for the operator $\R_h$ defined in~\cref{eq:Rh}, 
where $C_\V$ is the ellipticity constant of $\V$.
This shows that the problems are uniquely solvable at every time step. 
Multiplying with $2\tau^n(1-2\tau)^{n-1}$, rearranging the terms, 
and using the fact that $\tau^n\leq \tau\leq 1/4$,
a Gronwall argument, see, e.g.,~\cite{Wheeler:1973}, 
leads to
\begin{align}
 \label{eq:energyht1}
\norm{u_{h,\tau}^N}{L^2(\Omega)}^2 + \alpha\sum_{n=1}^N \tau^n \norm{u_{h,\tau}^{n}}{H^1(\Omega)}^2 
\le C e^{2N\tau} \sum_{n=1}^N \tau^n (\norm{\widehat{f}^{n}}{H^1(\Omega)'}^2 
+\norm{\widehat{g}^{n}}{H^{1/2}(\Gamma)}^2),
\end{align}
with a constant $C>0$.
Since $u_{h,\tau}$ and $\phi_{h,\tau}$ are piecewise linear and constant, respectively,
we easily see that
\begin{align}
 \label{eq:energyht2}
 \norm{u_{h,\tau}}{H_T}^2\leq \frac{4}{3}\sum_{n=1}^N \tau^n \norm{u_{h,\tau}^{n}}{H^1(\Omega)}^2
 \quad\text{ and }\quad
  \norm{\phi_{h,\tau}}{B_T}^2\leq \sum_{n=1}^N \tau^n \norm{\phi_{h,\tau}^{n}}{H^{-1/2}(\Omega)}^2.
\end{align}
For the right-hand side of~\cref{eq:energyht1} it follows 
directly by the Cauchy-Schwarz inequality and 
$\norm{\omega^n(t)}{L^2(t^{n-1},t^n)}^2=4\tau^n$ that
\begin{align}
 \label{eq:energyht3}
\sum_{n=1}^N \tau^n \norm{\widehat{f}^{n}}{H^1(\Omega)'}^2
\leq 4\norm{f}{H_T'}^2\qquad\text{and}\qquad
\sum_{n=1}^N \tau^n \norm{\widehat{g}^{n}}{H^{1/2}(\Gamma)}^2)
\leq 4\norm{g}{B_T'}^2   
\end{align} 
With~\cref{eq:vp2htau} and the same arguments as for~\cref{eq:energyphih} we get 
the bound
\begin{align}
 \label{eq:energyht4}
 C_\V\norm{\phi^n_{h,\tau}}{H^{-1/2}(\Gamma)}
  \le (1/2+C_\K) C_{tr} \norm{u^n_{h,\tau}}{H^1(\Omega)} 
 + \norm{\widehat{g}^n}{H^{1/2}(\Gamma)}.
\end{align}
Now the energy estimate~\cref{eq:energyht} follows from~\cref{eq:energyht1}--\cref{eq:energyht4}.
\end{proof}
With similar arguments as used for the analysis on the semi-discrete level, 
we also obtain a bound for the time derivatives $\dt u_{h,\tau}$ of the discrete solution.
\begin{lemma}[Energy estimate] 
\label{lem:fullydiscreteenergy}
Let~\cref{as:A1}--\cref{as:A2} hold and $\tau \le 1/4$. Then 
\begin{align}
\label{eq:fullydiscreteenergy}
\norm{u_{h,\tau}}{Q_T} + \norm{\phi_{h,\tau}}{B_T}
\le C \big( \norm{f}{H_T'} +\norm{g}{B_T'}\big). 
\end{align}
The constant $C>0$ depends only on the domain $\Omega$ and the time horizon $T$.
\end{lemma}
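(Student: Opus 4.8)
The plan is to follow the pattern of the proof of \cref{lem:energyh}: the bounds for $\norm{u_{h,\tau}}{H_T}$ and $\norm{\phi_{h,\tau}}{B_T}$ are already contained in \cref{lem:wellposedhtau} (the estimates \cref{eq:energyht1}--\cref{eq:energyht4} there combine to \cref{eq:energyht}), so the only genuinely new ingredient needed for the $Q_T$-norm is a uniform bound on the discrete time derivative $\norm{\dt u_{h,\tau}}{H_T'}$; the claim then follows from $\norm{u_{h,\tau}}{Q_T}^2 = \norm{u_{h,\tau}}{H_T}^2 + \norm{\dt u_{h,\tau}}{H_T'}^2$. Since $u_{h,\tau} \in Q_T^{h,\tau}$ is continuous and piecewise linear in time, its distributional time derivative is piecewise constant with $\dt u_{h,\tau}|_{(t^{n-1},t^n)} = \dtau u_{h,\tau}^n \in H^h$, whence $\norm{\dt u_{h,\tau}}{H_T'}^2 = \sum_{n=1}^N \tau^n \norm{\dtau u_{h,\tau}^n}{H^1(\Omega)'}^2$ and it suffices to estimate each nodal quantity $\norm{\dtau u_{h,\tau}^n}{H^1(\Omega)'}$.

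To do so I would reuse the duality-plus-projection argument from \cref{eq:dualnorm}. Since $\dtau u_{h,\tau}^n \in H^h$, the definition \cref{eq:projection} of the $L^2$-projection gives
\begin{align*}
\norm{\dtau u_{h,\tau}^n}{H^1(\Omega)'}
= \sup_{0 \ne v \in H^1(\Omega)} \frac{\product{\dtau u_{h,\tau}^n}{P_h v}_\Omega}{\norm{v}{H^1(\Omega)}}.
\end{align*}
Inserting the equivalent discrete equation \cref{eq:vp1htau} with the admissible test function $v_h = P_h v \in H^h$ to rewrite the numerator, and then applying the Cauchy-Schwarz inequality, the trace inequality, and the $H^1(\Omega)$-stability \cref{as:A2} of $P_h$, one obtains
\begin{align*}
\norm{\dtau u_{h,\tau}^n}{H^1(\Omega)'}
\le C \big( \norm{u_{h,\tau}^n}{H^1(\Omega)} + \norm{\phi_{h,\tau}^n}{H^{-1/2}(\Gamma)} + \norm{\widehat{f}^n}{H^1(\Omega)'} \big),
\end{align*}
with $C$ depending only on $C_P$ and $C_{tr}$ — the discrete-time analogue of the bound proved in \cref{lem:energyh}.

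The remaining step is bookkeeping. I would square this inequality, multiply by $\tau^n$, and sum over $n = 1, \ldots, N$: the sum $\sum_n \tau^n \norm{u_{h,\tau}^n}{H^1(\Omega)}^2$ is controlled by \cref{eq:energyht1}; the sum $\sum_n \tau^n \norm{\phi_{h,\tau}^n}{H^{-1/2}(\Gamma)}^2$ is controlled by \cref{eq:energyht4} combined with \cref{eq:energyht1} and \cref{eq:energyht3}; and $\sum_n \tau^n \norm{\widehat{f}^n}{H^1(\Omega)'}^2 \le 4 \norm{f}{H_T'}^2$ by \cref{eq:energyht3}. Feeding in once more the energy estimate \cref{eq:energyht} of \cref{lem:wellposedhtau} to absorb the $u_{h,\tau}^n$- and $\phi_{h,\tau}^n$-contributions yields $\norm{\dt u_{h,\tau}}{H_T'} \le C (\norm{f}{H_T'} + \norm{g}{B_T'})$; together with \cref{eq:energyht} this is exactly \cref{eq:fullydiscreteenergy}.

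I do not expect a genuine obstacle here: this lemma is the fully discrete twin of \cref{lem:energyh}, and all the hard work — unique solvability, the Gronwall-based energy bound, and the $L^2(0,T)$-control of the weighted data averages via the identity $\norm{\omega^n}{L^2(t^{n-1},t^n)}^2 = 4\tau^n$ — has already been carried out in \cref{lem:wellposedhtau}. The only points that require a little care are that the estimate for $\dtau u_{h,\tau}^n$ must be derived at the grid level and then reassembled using the piecewise-linear structure of $Q_T^{h,\tau}$ and the piecewise-constant structure of $B_T^{h,\tau}$, and that it is precisely the special averaged right-hand sides $\widehat{f}^n$, $\widehat{g}^n$ that keep the final bound free of any extra regularity requirements on $f$ and $g$.
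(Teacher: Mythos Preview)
Your proposal is correct and follows essentially the same approach as the paper: reduce to bounding $\norm{\dt u_{h,\tau}}{H_T'}^2 = \sum_n \tau^n \norm{\dtau u_{h,\tau}^n}{H^1(\Omega)'}^2$, use the $L^2$-projection trick and \cref{as:A2} together with \cref{eq:vp1htau} to bound each term by $\norm{u_{h,\tau}^n}{H^1(\Omega)} + \norm{\phi_{h,\tau}^n}{H^{-1/2}(\Gamma)} + \norm{\widehat{f}^n}{H^1(\Omega)'}$, then square, sum, and invoke \cref{eq:energyht1}, \cref{eq:energyht3}, \cref{eq:energyht4}, and \cref{eq:energyht}. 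The paper does exactly this.
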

\begin{proof}
In view of \cref{lem:wellposedhtau}, we only have to estimate 
\begin{align*}
 \norm{\dt u_{h,\tau}}{H'_T}^2=
 \sum_{n=1}^N \tau^n\norm{\dtau u_{h,\tau}^n}{H^1(\Omega)'}^2.
\end{align*}
With similar reasoning as in \cref{lem:energyh}, we obtain
\begin{align}
 \label{eq:fullydiscreteenergy1}
\norm{\dtau u_{h,\tau}^n}{H^1(\Omega)'} 
&= \sup_{0\not= v \in H^1(\Omega)} 
\frac{\product{\dtau u_{h,\tau}^n}{v}_\Omega}{\norm{v}{H^1(\Omega)}}
 = \sup_{0\not= v \in H^1(\Omega)} 
 \frac{\product{\dtau u_{h,\tau}^n}{P_h v}_\Omega}{\norm{v}{H^1(\Omega)}}.
\end{align}
By equation~\cref{eq:vp1htau} and the Cauchy-Schwarz inequality, we further get
\begin{align*}
\product{\dtau u_{h,\tau}^n}{P_h v}_\Omega \le \big(\norm{u_{h,\tau}^n}{H^1(\Omega)} 
+ C_{tr} \norm{\phi_{h,\tau}^n}{H^{-1/2}(\Gamma)} +\norm{\widehat{f}^{n}}{H^{1}(\Omega)'}\big) 
\norm{P_h v}{H^1(\Omega)}.
\end{align*}
The $H^1$-stability assumption~\cref{as:A2} therefore yields for~\cref{eq:fullydiscreteenergy1} 
\begin{align*}
\norm{\dtau u_{h,\tau}^n}{H^1(\Omega)'} 
 \le C \big( \norm{u_{h,\tau}^n}{H^1(\Omega)} 
 + \norm{\phi_{h,\tau}^n}{H^{-1/2}(\Gamma)} + \norm{\widehat{f}^{n}}{H^1(\Omega)'} \big).
\end{align*}
The assertion now follows by squaring this estimate, multiplying with $\tau^{n}$, 
summation over $n$, and the estimates~\cref{eq:energyht1},~\cref{eq:energyht3},~\cref{eq:energyht4},
and~\cref{eq:energyht}.
\end{proof}
Now we prove the main result of this work.
\begin{theorem}[Quasi optimality of the fully discrete scheme] 
\label{thm:main}
Let~\cref{as:A1}--\cref{as:A2} hold and $\tau \le 1/4$. Furthermore, $(u,\phi) \in Q_T \times B_T$ 
and $(u_{h,\tau},\phi_{h,\tau}) \in Q_T^{h,\tau} \times B_T^{h,\tau}$ denote the solutions of
\cref{prob:variational} and \cref{prob:fullydiscreteweight}, respectively. Then 
\begin{align}
 \label{eq:fullquasioptimal}
\norm{u - u_{h,\tau}}{Q_T} + \norm{\phi - \phi_{h,\tau}}{B_T}  
\le C \big( \norm{u - \tilde u_{h,\tau}}{Q_T} + \norm{\phi - \tilde \phi_{h,\tau}}{B_T}\big)
\end{align}
for all functions $\tilde u_{h,\tau} \in Q_T^{h,\tau}$ and $\tilde \phi_{h,\tau} \in B_T^{h,\tau}$.  
The constant $C>0$ in this estimate depends only on the domain $\Omega$ and the time horizon $T$.
\end{theorem}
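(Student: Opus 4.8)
The plan is to follow the proof of \cref{thm:quasioptimality}, with the fully discrete energy estimate \cref{lem:fullydiscreteenergy} taking over the role played there by \cref{lem:energyh}. Fix arbitrary $\tilde u_{h,\tau} \in Q_T^{h,\tau}$ and $\tilde\phi_{h,\tau} \in B_T^{h,\tau}$ and split the error by the triangle inequality,
\begin{align*}
\norm{u - u_{h,\tau}}{Q_T} + \norm{\phi - \phi_{h,\tau}}{B_T}
&\le \norm{u - \tilde u_{h,\tau}}{Q_T} + \norm{\phi - \tilde\phi_{h,\tau}}{B_T} \\
&\quad + \norm{\tilde u_{h,\tau} - u_{h,\tau}}{Q_T} + \norm{\tilde\phi_{h,\tau} - \phi_{h,\tau}}{B_T},
\end{align*}
into an approximation part, which is exactly the right-hand side of~\cref{eq:fullquasioptimal}, and a discrete part $w_{h,\tau} := \tilde u_{h,\tau} - u_{h,\tau} \in Q_T^{h,\tau}$ and $\rho_{h,\tau} := \tilde\phi_{h,\tau} - \phi_{h,\tau} \in B_T^{h,\tau}$ that remains to be estimated.

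Since $Q_T^{h,\tau} \subset Q_T$ and $B_T^{h,\tau} \subset B_T$, the differences $\tilde u_{h,\tau} - u \in Q_T$ and $\tilde\phi_{h,\tau} - \phi \in B_T$ are admissible, so I would introduce $F \in H_T'$ and $G \in B_T'$ by
\begin{align*}
\dual{F(t)}{v}_\Omega &:= \dual{\dt(\tilde u_{h,\tau} - u)(t)}{v}_\Omega + \product{\nabla(\tilde u_{h,\tau} - u)(t)}{\nabla v}_\Omega - \dual{(\tilde\phi_{h,\tau} - \phi)(t)}{v}_\Gamma, \\
\dual{G(t)}{\psi}_\Gamma &:= \dual{(1/2-\K)(\tilde u_{h,\tau} - u)(t)}{\psi}_\Gamma + \dual{\V(\tilde\phi_{h,\tau} - \phi)(t)}{\psi}_\Gamma
\end{align*}
for all $v \in H$ and $\psi \in B$, in exact analogy with the residuals used in the proof of \cref{thm:quasioptimality}. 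The weighted averaging operator $\widehat{\,\cdot\,}^n$ of~\cref{eq:hatv} is linear and commutes with the time-independent operators $\nabla$, $\K$, $\V$ and the trace, and it reduces to the identities~\cref{eq:identities} on the discrete functions $\tilde u_{h,\tau},\tilde\phi_{h,\tau},u_{h,\tau},\phi_{h,\tau}$. Evaluating the left-hand sides of~\cref{eq:vp1htauweight}--\cref{eq:vp2htauweight} at $(\tilde u_{h,\tau},\tilde\phi_{h,\tau})$, subtracting the equations satisfied by $(u_{h,\tau},\phi_{h,\tau})$, and using \cref{rem:consistency} to rewrite $\widehat f^n$ and $\widehat g^n$ through $(u,\phi)$, one finds that $(w_{h,\tau},\rho_{h,\tau})$ solves a system of exactly the form of \cref{prob:fullydiscreteweight} with right-hand side data the weighted averages $\widehat F^n$ and $\widehat G^n$ of the $F$ and $G$ above.

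Consequently, since $\tau \le 1/4$, \cref{lem:fullydiscreteenergy} applies to $(w_{h,\tau},\rho_{h,\tau})$ and yields $\norm{w_{h,\tau}}{Q_T} + \norm{\rho_{h,\tau}}{B_T} \le C\big(\norm{F}{H_T'} + \norm{G}{B_T'}\big)$. As in the proof of \cref{thm:quasioptimality}, the boundedness of the trace operator and of $\K$ and $\V$ from \cref{lem:elliptic}, together with the Cauchy--Schwarz inequality and integration over $[0,T]$, give $\norm{F}{H_T'} + \norm{G}{B_T'} \le C\big(\norm{u - \tilde u_{h,\tau}}{Q_T} + \norm{\phi - \tilde\phi_{h,\tau}}{B_T}\big)$; note that the $\dt$ term in $F$ forces the full $Q_T$-norm here. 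Substituting this into the error splitting completes the proof.

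The only step requiring real care is the identification of the discrete-error system with an instance of \cref{prob:fullydiscreteweight}: this rests on the Petrov--Galerkin consistency stated in \cref{rem:consistency} and on the commutation of $\widehat{\,\cdot\,}^n$ with the time-independent spatial operators. Everything else is a direct transcription of the semi-discrete argument with \cref{lem:fullydiscreteenergy} in place of \cref{lem:energyh}; in particular, carrying the whole estimate in the energy norm $\norm{\cdot}{Q_T}$ is exactly what lets us sidestep the elliptic projection and duality arguments that are unavailable for the non-symmetric coupling.
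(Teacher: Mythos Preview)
Your proposal is correct and follows essentially the same argument as the paper: split the error by the triangle inequality, use the Petrov--Galerkin consistency of \cref{rem:consistency} together with the identities~\cref{eq:identities} to recognize that the discrete error $(w_{h,\tau},\rho_{h,\tau})$ solves an instance of \cref{prob:fullydiscreteweight} with data $\widehat F^n,\widehat G^n$, apply \cref{lem:fullydiscreteenergy}, and bound $\norm{F}{H_T'}+\norm{G}{B_T'}$ by the approximation error exactly as in the semi-discrete proof. The only difference is cosmetic presentation; the logical structure and all key ingredients coincide.
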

\begin{proof}
The result follows with similar arguments as used in the proof of
\cref{thm:quasioptimality}.
Let $\tilde u_{h,\tau} \in Q_T^{h,\tau}$ and $\tilde \phi_{h,\tau} \in B_T^{h,\tau}$ be arbitrary.
Then we split the error
\begin{align*}
\norm{u - u_{h,\tau}}{Q_T}\leq
\norm{u-\tilde u_{h,\tau}}{Q_T}+\norm{\tilde u_{h,\tau} - u_{h,\tau}}{Q_T}, \\
\norm{\phi - \phi_{h,\tau}}{B_T}\leq
\norm{\phi - \tilde\phi_{h,\tau}}{B_T}+\norm{\tilde \phi_{h,\tau} - \phi_{h,\tau}}{B_T}.
\end{align*}
To estimate the \emph{discrete error}
we recall the consistency of the
fully discrete scheme~\cref{eq:vp1htauweight}--\cref{eq:vp2htauweight} 
with the variational problem~\cref{eq:vp1}--\cref{eq:vp2}, see \cref{rem:consistency}. 
Hence, the discrete error components
$w_{h,\tau}:=\tilde u_{h,\tau} - u_{h,\tau}$ 
$\rho_{h,\tau}:=\tilde \phi_{h,\tau} - \phi_{h,\tau}$ fulfill the system
\begin{align}
 \label{eq:fullquasi1}
 \dual{\widehat{\dt w}^n_{h,\tau}\!}{v_h}_\Omega + \product{\widehat{\nabla w}^n_{h,\tau}\!}{\nabla v_h}_\Omega 
 - \dual{\widehat{\rho}^n_{h,\tau}}{v_h}_{\Gamma} &= \dual{\widehat{F}^n}{v_h}_\Omega,\\
 \label{eq:fullquasi2}
 \dual{(1/2-\K) \widehat{w}^n_{h,\tau}}{\psi_h}_{\Gamma} + 
 \dual{\V  \widehat{\rho}^n_{h,\tau}}{\psi_h}_{\Gamma} 
 &= \dual{\widehat{G}^n}{\psi_h}_{\Gamma}
\end{align}
for all $v_{h} \in H^h$, $\psi_h \in B^h$, and all $1 \le n \le N$ 
with the averaged right-hand sides $\widehat{F}$ and $\widehat{G}$ obtained from
\begin{align*}
\dual{F(t)}{v}_{\Omega} 
  &:= \dual{\dt \tilde u_{h,\tau}(t) - \dt u(t)}{v}_\Omega\\ 
  &\qquad+ \product{\nabla \tilde u_{h,\tau}(t) - 
    \nabla u(t)}{\nabla v}_\Omega 
  - \dual{\tilde \phi_{h,\tau}(t) - \phi(t)}{v}_{\Gamma},\\
\dual{G(t)}{\psi}_{\Gamma} 
  &:= \dual{(1/2-\K) (\dt \tilde u_{h,\tau}(t) - \dt u(t))}{\psi}_{\Gamma} 
  + \dual{\V (\tilde \phi_{h,\tau}(t) - \phi(t))}{\psi}_{\Gamma}.
\end{align*}
for all $v\in H$ and $\psi\in B$.
%
Note that the system~\cref{eq:fullquasi1}--\cref{eq:fullquasi2} 
has the same form as~\cref{eq:vp1htauweight}--\cref{eq:vp2htauweight}
with the right-hand sides $\widehat{F}^n$ 
and $\widehat{G}^n$.
Thus we can apply the energy estimate~\cref{eq:fullydiscreteenergy} of \cref{lem:fullydiscreteenergy}.
The estimates
\begin{align*}
\norm{F}{H'_T} 
&\le  C \big( \norm{u - \tilde u_{h,\tau}}{Q_T} + \norm{\phi - \tilde \phi_{h,\tau}}{B_T}\big),  \\
\norm{G}{B'_T} 
& \le C \big( \norm{u - \tilde u_{h,\tau}}{H_T} + \norm{\phi - \tilde \phi_{h,\tau}}{B_T}\big),
\end{align*}
and the error splitting complete the proof for~\cref{eq:fullquasioptimal}.
\end{proof}

\begin{remark} 
The time discretization strategy can also be applied directly to the continuous 
variational problem \cref{eq:vp1}--\cref{eq:vp2}. Let us denote by
\begin{align*}
Q_T^\tau &= \set{u \in Q_T}{u|_{[t^{n-1},t^n]} \text{ is linear in } t} \qquad \text{and} \\
B_T^\tau &= \set{\phi \in B_T}{\phi|_{(t^{n-1},t^n]} \text{ is constant in } t} 
\end{align*}
the corresponding function spaces and let $(u_\tau, \phi_\tau) \in Q_T^\tau \times B_T^\tau$ 
be the respective solutions obtained by time discretization of the continuous variational problem.
The well-posedness of this time-discretized problem follows by simply 
setting $Q_T^{h,\tau}=Q_T^\tau$ and $B_T^{h,\tau}=B_T^\tau$ in the above results. 
As a consequence, we also obtain the quasi-optimal error bound
\begin{align*}
\norm{u - u_\tau}{Q_T} + \norm{\phi - \phi_\tau}{Q_T} \le C \big( \norm{u - \tilde u_\tau}{Q_T} 
+ \norm{\phi - \tilde \phi_\tau}{B_T}\big). 
\end{align*}
for all $\tilde u_\tau \in Q_T^\tau$ and $\tilde \phi_\tau \in B_T^\tau$
with a constant $C$ being independent of $u$, $\phi$ and the temporal grid. 
The condition~\cref{as:A2} is not required for this result to hold true.
\end{remark}

\begin{remark}
Explicit error bounds for the time discretization of the continuous and the semi-discrete 
variational problem can also be obtained via the usual Taylor estimates 
under some regularity 
assumptions on the solution. As we will see in the next section, we obtain linear convergence 
with respect to $\tau$ and independent of the spatial approximation.
Furthermore, other time discretization schemes are possible here, e.g., choose
$w^n(t)=1$ in~\cref{eq:hatv}. Then the identities~\cref{eq:identities} are
 \begin{align*}
 \widehat{u}_{h,\tau}^n = (u_{h,\tau}^n+u_{h,\tau}^{n-1})/2, \qquad 
 \widehat{\dt u}_{h,\tau}^n =  \dtau u_{h,\tau}^n= \frac{1}{\tau^n} (u_{h,\tau}^n - u_{h,\tau}^{n-1}), 
 \quad \text{ and } 
 \widehat{\phi}_{h,\tau}^n = \phi_{h,\tau}^n.
 \end{align*}
 and the discrete system \cref{prob:fullydiscreteweight} becomes a variant of the Crank-Nicolson time
 discretization.
\end{remark}

\section{Error estimates for a FEM-BEM discretization} 
\label{sec:fembem}

In this section we discuss a space discretization with finite and boundary elements. 
Together with the 
time discretization of the previous section, this yields to a fully discrete method which converges 
uniformly and exhibits order optimal convergence rates under minimal 
regularity assumptions on the solution.
We assume in the following that 
\begin{align}
 \label{as:A3}
 \tag{A3}
 &\T=\{T\} \text{ is a conforming triangulation of the domain }\Omega; \text{ see~\cite{Ciarlet:1978-book}}. \\
 \label{as:A4}
 \tag{A4}
 &\E_{\Gamma} = \{ E \} \text{ is a segmentation of the boundary }\Gamma \text{ into straight edges}. 
\end{align}
Note that condition~\cref{as:A3} and~\cref{as:A4} particularly imply that $\Gamma$ is a polygon
and that the surface mesh $\E_{\Gamma}$ is in general decoupled from the mesh $\T|_\Gamma$ of the domain.
\begin{remark}
An analysis for curved boundaries can be found in~\cite{ErathSchorr:2017-2}.
In~\cite{Gonzalez:2006}, 
curved finite elements are considered for the symmetric FEM-BEM coupling in two dimensions 
for a time-dependent problem.  
\end{remark}
As usual we denote by $\rho_T$ and $h_T$ the inner circle radius and diameter of the 
triangle $T\in\T$ and by $h_E$ the length of the edge $E\in\E_{\Gamma}$. We further set 
$h = \max\{\max_{T} h_T,\max_{E} h_E\}$ and assume that 
\begin{align}
 \label{as:A5}
 \tag{A5}
 \begin{split}
 &\text{the partition }(\T,\E_{\Gamma}) \text{ is }\eta\text{-quasi-uniform with }\eta>0, \text{ i.e. },\\
&\eta h \le \rho_T \le h_T \le h 
\qquad \text{and} \qquad 
\eta h \le h_E \le h
\qquad \text{for all } T \in \T , \ E \in \E_{\Gamma}.
\end{split}
\end{align}
For the Galerkin semi-discretization in space we utilize the standard approximations
\begin{align}
 \label{eq:spaceHh}
H^h &= \set{v \in C(\Omega)}{v|_T \in \P^1(T) \text{ for all } T \in \T }\qquad \text{and} \\
\label{eq:spaceBh}
B^h &= \set{\psi \in L^2(\Gamma)}{\psi|_E \in \P^0(E) \text{ for all } E \in \E_{\Gamma}} 
\end{align}
consisting of globally continuous and
piecewise linear functions over $\T$ and piecewise constant functions over $\E_{\Gamma}$, 
respectively. We denote by 
$P_h: L^2(\Omega) \to H^h$ and $\Pi_h : H^{-1/2}(\Gamma) \to B^h$ the 
$L^2(\Omega)$- and the $H^{-1/2}(\Gamma)$-orthogonal projection, respectively. 
\begin{lemma} 
\label{lem:approxerror}
Let~\cref{as:A1} and~\cref{as:A3}--\cref{as:A5} hold. Then~\cref{as:A2} is valid with a constant $C_P$ 
independent of the mesh-size. Moreover, the operator $P_h$ can be extended to a bounded linear 
operator on $H^1(\Omega)'$. Hence, for all $0 \le s \le 1$ and $0 \le s_e \le 3/2$ we have
\begin{align*}
\norm{u - P_h u}{H^{1}(\Omega)} &\le C h^s \norm{u}{H^{1+s}(\Omega)},\qquad u\in H^{1+s}(\Omega), \\
\norm{u - P_h u}{H^{1}(\Omega)'} &\le C h^s \norm{u}{H^{1-s}(\Omega)'},\qquad u\in H^{1-s}(\Omega)',  \\
\norm{\phi - \Pi_h \phi}{H^{-1/2}(\Gamma)} &\le C h^{s_e} 
\norm{\phi}{H^{-1/2+s_e}(\Gamma)},\qquad \phi\in H^{-1/2+s_e}(\Gamma).
\end{align*}
The constant $C>0$ is independent of the particular choice of the triangulation. 
\end{lemma}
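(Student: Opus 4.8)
The plan is to reduce the whole lemma to two classical ingredients: the $H^1(\Omega)$-stability of the $L^2(\Omega)$-projection $P_h$ on quasi-uniform meshes, and the standard approximation properties of finite- and boundary-element quasi-interpolants. Granted these, the three displayed estimates and the mapping property of $P_h$ on $H^1(\Omega)'$ follow by duality and space interpolation.

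First I would establish~\cref{as:A2}. Let $I_h : H^1(\Omega) \to H^h$ be a Scott--Zhang-type quasi-interpolation operator; by~\cref{as:A3} and shape-regularity it is $H^1(\Omega)$-stable and satisfies $\norm{v - I_h v}{L^2(\Omega)} \le C h \norm{v}{H^1(\Omega)}$. Since $P_h v$ is the best $L^2(\Omega)$-approximation to $v$ in $H^h$ and $I_h v \in H^h$, we have $\norm{v - P_h v}{L^2(\Omega)} \le \norm{v - I_h v}{L^2(\Omega)}$. Because $P_h v - I_h v \in H^h$, the global inverse inequality $\norm{w_h}{H^1(\Omega)} \le C h^{-1}\norm{w_h}{L^2(\Omega)}$ --- which is exactly where the quasi-uniformity assumption~\cref{as:A5} enters --- gives $\norm{P_h v - I_h v}{H^1(\Omega)} \le C h^{-1}(\norm{v - P_h v}{L^2(\Omega)} + \norm{v - I_h v}{L^2(\Omega)}) \le C\norm{v}{H^1(\Omega)}$, and a triangle inequality together with the $H^1$-stability of $I_h$ yields~\cref{as:A2} with a mesh-independent constant $C_P$. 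The extension of $P_h$ to a bounded operator on $H^1(\Omega)'$ is then immediate by duality: defining $P_h f \in H^h$ for $f \in H^1(\Omega)'$ via $\product{P_h f}{w_h}_\Omega = \dual{f}{w_h}_\Omega$, self-adjointness and idempotency of $P_h$ in $L^2(\Omega)$ give $\product{P_h f}{v}_\Omega = \dual{f}{P_h v}_\Omega$ for $v \in H^1(\Omega)$, whence $\norm{P_h f}{H^1(\Omega)'} \le C_P \norm{f}{H^1(\Omega)'}$.

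For the approximation bounds I would argue as follows. The $H^1(\Omega)$-estimate uses the identity $u - P_h u = (I - P_h)(u - I_h u)$, so~\cref{as:A2} reduces it to the standard interpolation estimate $\norm{u - I_h u}{H^1(\Omega)} \le C h^s \norm{u}{H^{1+s}(\Omega)}$ for $0 \le s \le 1$. The $H^1(\Omega)'$-estimate follows by an Aubin--Nitsche-type duality argument: for $v \in H^1(\Omega)$, using self-adjointness of $P_h$, $\dual{u - P_h u}{v}_\Omega = \dual{u}{v - P_h v}_\Omega \le \norm{u}{H^{1-s}(\Omega)'}\norm{v - P_h v}{H^{1-s}(\Omega)}$, and the $L^2$-projection error estimate $\norm{v - P_h v}{H^{1-s}(\Omega)} \le C h^s \norm{v}{H^1(\Omega)}$ --- obtained by interpolating between the $s = 1$ case ($L^2$, order $h$) and the $s = 0$ case (the $H^1$ bound just shown, order $1$) --- gives the claim after taking the supremum over $v$. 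Finally, the estimate for $\Pi_h$ is the classical approximation property of piecewise-constant boundary elements in the $H^{-1/2}(\Gamma)$-norm: it combines the $L^2(\Gamma)$-approximation of piecewise constants, the quasi-uniformity~\cref{as:A5} of the surface mesh $\E_{\Gamma}$, the quasi-optimality of the $H^{-1/2}(\Gamma)$-projection $\Pi_h$, and interpolation, yielding the full range $0 \le s_e \le 3/2$; see, e.g., standard references on boundary element methods.

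The step requiring the most care is the $H^1(\Omega)$-stability~\cref{as:A2}, since it is the single point where the global quasi-uniformity assumption~\cref{as:A5} is genuinely used --- on merely shape-regular, strongly graded meshes the $L^2$-projection need not be $H^1$-stable --- and without it neither the mapping property on $H^1(\Omega)'$ nor the negative-norm approximation estimate would go through. The remaining steps are routine assemblies of standard finite- and boundary-element estimates together with space interpolation.
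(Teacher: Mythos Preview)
Your proposal is correct and follows essentially the same route as the paper: establish \cref{as:A2} via an inverse inequality on the quasi-uniform mesh, derive the $H^1(\Omega)$-estimate by comparison with an auxiliary approximant (you use the Scott--Zhang operator $I_h$, the paper uses the $H^1$-orthogonal projection $P_h^1$), obtain the $H^1(\Omega)'$-estimate by duality and interpolation between the endpoints $s=0$ and $s=1$, and cite standard BEM approximation for $\Pi_h$. The only cosmetic difference is that the paper interpolates the source space of $I-P_h$ at the end, whereas you interpolate the target space in the intermediate bound $\norm{v-P_h v}{H^{1-s}(\Omega)}\le Ch^s\norm{v}{H^1(\Omega)}$; both are equivalent.
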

\begin{proof}
The assertion about $\phi$ follows from \cite[Th. 10.4]{Steinbach:2008-book}.
Validity of condition~\cref{as:A2} for these particular function spaces 
has been shown in~\cite{Costabel:1990} via an inverse inequality. 
Now we turn to the remaining estimates:
Let $P^1_h : H^1(\Omega) \to H^h$ be the $H^1$-orthogonal projection defined by 
\begin{align*}
\product{P^1_h u}{v_h}_{H^1(\Omega)} = \product{u}{v_h}_{H^1(\Omega)} \qquad \text{for all } v_h \in H^h,
\end{align*}
and recall that $\norm{u-P^1_h u}{H^1(\Omega)} \le C' h^s \norm{u}{H^{1+s}(\Omega)}$ 
for $0 \le s \le 1$; see, e.g.,~\cite{Brenner:2008-book}.
Then 
\begin{align*}
\norm{u - P_h u }{H^1(\Omega)} 
&\le \norm{u - P_h P_h^1 u}{H^1(\Omega)} + \norm{P_h (u - P_h^1 u)}{H^1(\Omega)} \\
&\le (1 + C_P) \norm{u - P^1_h u}{H^1(\Omega)} \le (1+C_P) C' h^s \norm{u}{H^{1+s}(\Omega)},
\end{align*}
where we used the projection property of $P_h$, condition~\cref{as:A2},
and the approximation properties of $P^1_h$ in the last two steps.
By definition of the dual norm, we further have 
\begin{align*}
\norm{u - P_h u}{H^{1}(\Omega)'} 
&= \sup_{0\not= v \in H^1(\Omega)} \frac{\product{u - P_h u}{v}_{\Omega}}{\norm{v}{H^1(\Omega)}} \\
&= \sup_{0\not= v \in H^1(\Omega)} \frac{\product{u}{v - P_h v}_{\Omega}}{\norm{v}{H^1(\Omega)}}
\le C h \norm{u}{L^2(\Omega)}.
\end{align*}
Here we used the standard estimate $\norm{v - P_h v}{L^2(\Omega)} \le C h \norm{v}{H^1(\Omega}$ 
for the $L^2$-projection in the last step. 
With a similar duality argument and condition~\cref{as:A2}, one can further see that 
$\norm{P_h u}{H^1(\Omega)'} \le C_P \norm{u}{H^1(\Omega)'}$ for all functions in $L^2(\Omega)$. 
By density of $L^2(\Omega)$ in $H^1(\Omega)'$, 
we can extend $P_h$ to a bounded linear operator on $H^1(\Omega)'$, and obtain 
\begin{align*}
\norm{u - P_h u}{H^1(\Omega)'} \le (1+C_P) \norm{u}{H^1(\Omega)'}. 
\end{align*}
Noting that $L^2(\Omega) = H^0(\Omega) = H^0(\Omega)'$ and interpolating the two latter bounds 
now allows us to establish the second estimate for $u$ which completes the proof.
\end{proof}

\begin{remark} 
Due to the results of~\cite{Bramble:2001} and~\cite{Bank:2014}, 
the assertions of \cref{lem:approxerror} also holds true on rather general shape-regular meshes 
under a mild growth condition on the local mesh size. With standard arguments, these estimates 
can also be generalized to polynomial approximations of higher order. All results that 
are presented below thus can be extended to such more general situations.
\end{remark}
As a consequence of these approximation error bounds and the quasi-best approximation of 
the semi-discretization, we obtain the following quantitative error estimates.
\begin{theorem} 
Let~\cref{as:A1}--\cref{as:A5} hold and denote by $(u,\phi)$ and $(u_h,\phi_h)$ 
the solutions of \cref{prob:variational} and 
\cref{prob:semidiscrete}, respectively. Then 
\begin{align*}
&\norm{u - u_h}{Q_T} + \norm{\phi - \phi_h}{B_T} \\
&\quad\le C h^s \big( \norm{u}{L^2(0,T;H^{1+s}(\Omega))} + \norm{\dt u}{L^2(0,T;H^{1-s}(\Omega)')} 
+ \norm{\phi}{L^2(0,T;H^{s-1/2}(\Gamma))}\big) 
\end{align*}
for all $0 \le s \le 1$, $u(t)\in H^{1+s}(\Omega)$, $\dt u \in H^{1-s}(\Omega)'$, 
$\phi(t)\in H^{-1/2+s}(\Gamma)$, and for a.e. $t\in [0,T]$.
\end{theorem}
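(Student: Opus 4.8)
The plan is to combine the quasi-optimality estimate for the semi-discretization with the approximation properties of the projections $P_h$ and $\Pi_h$ established in \cref{lem:approxerror}. By the remark following \cref{thm:quasioptimality}, applied with the finite and boundary element spaces~\cref{eq:spaceHh}--\cref{eq:spaceBh}, we have
\[
\norm{u - u_h}{Q_T} + \norm{\phi - \phi_h}{B_T} \le C\big(\norm{u - P_h u}{Q_T} + \norm{\phi - \Pi_h \phi}{B_T}\big),
\]
so it remains to bound the two projection errors by the quantities appearing on the right-hand side of the claimed estimate.

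For the first term I would unfold the $Q_T$-norm. Since $P_h$ acts only in the spatial variable and is independent of $t$, it commutes with $\dt$; using the extension of $P_h$ to a bounded operator on $H^1(\Omega)'$ provided by \cref{lem:approxerror} together with $P_h u(0) = P_h 0 = 0$, one checks that $P_h u \in Q_T^h$ and $\dt(P_h u) = P_h(\dt u)$. Hence
\[
\norm{u - P_h u}{Q_T}^2 = \int_0^T \norm{u(t) - P_h u(t)}{H^1(\Omega)}^2\,dt + \int_0^T \norm{\dt u(t) - P_h\,\dt u(t)}{H^1(\Omega)'}^2\,dt.
\]
Into the first integral I insert the bound $\norm{u(t) - P_h u(t)}{H^1(\Omega)} \le C h^s \norm{u(t)}{H^{1+s}(\Omega)}$ and into the second the bound $\norm{\dt u(t) - P_h\,\dt u(t)}{H^1(\Omega)'} \le C h^s \norm{\dt u(t)}{H^{1-s}(\Omega)'}$, both from \cref{lem:approxerror}; integration in $t$ then gives $\norm{u - P_h u}{Q_T} \le C h^s\big(\norm{u}{L^2(0,T;H^{1+s}(\Omega))} + \norm{\dt u}{L^2(0,T;H^{1-s}(\Omega)')}\big)$.

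For the boundary term I proceed analogously: choosing $s_e = s \in [0,1] \subset [0,3/2]$ in \cref{lem:approxerror} yields $\norm{\phi(t) - \Pi_h\phi(t)}{H^{-1/2}(\Gamma)} \le C h^s \norm{\phi(t)}{H^{s-1/2}(\Gamma)}$ for a.e.\ $t$, and integration in time gives $\norm{\phi - \Pi_h\phi}{B_T} \le C h^s \norm{\phi}{L^2(0,T;H^{s-1/2}(\Gamma))}$. Adding the two contributions and collecting constants completes the proof. The only point requiring a little care is the identity $\dt(P_h u) = P_h(\dt u)$ together with the membership $P_h u \in Q_T^h$; this is where the $H^1(\Omega)'$-extension of $P_h$ and the $H^1$-stability~\cref{as:A2} enter, while everything else is a routine combination of \cref{thm:quasioptimality} and \cref{lem:approxerror}.
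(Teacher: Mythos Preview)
Your proof is correct and follows exactly the approach the paper intends: the paper's own proof is the single sentence ``The result follows directly from \cref{thm:quasioptimality} and \cref{lem:approxerror},'' and you have simply spelled out how those two ingredients combine, including the commutation $\dt(P_h u)=P_h(\dt u)$ and the verification that $P_h u\in Q_T^h$.
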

\begin{proof}
The result follows directly from \cref{thm:quasioptimality} and \cref{lem:approxerror}. 
\end{proof}

\begin{remark} 
Let us emphasize that the estimate of the theorem is optimal with respect to both, 
the approximation properties of the spaces $Q_T^h$ and $B_T^h$ and the smoothness 
requirements on the solution. Furthermore, the method even converges without any 
smoothness assumptions on the 
solution, i.e., for all $u \in Q_T$ and $\phi \in B_T$.
\end{remark}

For the full discretization we will also need the $L^2$-projection in time, i.e., operators 
$P^\tau : L^2(0,T;L^2(\Omega)) \to Q_T^\tau$ and
$\Pi^\tau : L^2(0,T;H^{-1/2}(\Gamma)) \to B_T^\tau$. These satisfy
\begin{align*}
\norm{v - P^\tau v}{Q_T} &\le C \tau^r \big(\norm{\dt v}{H^r(0,T;H^1(\Omega)')}
+ \norm{v}{H^r(0,T;H^1(\Omega))}\big),\qquad 0\leq r\leq 1, \\
\norm{\psi - \Pi^\tau \psi}{B_T} &\le C \tau^r \norm{\psi}{H^r(0,T;H^{-1/2}(\Gamma))},\qquad 0\leq r\leq 1.
\end{align*}
Then we obtain the following result for the fully discrete scheme. 
\begin{theorem}
\label{th:errorestimateorder}
Let~\cref{as:A1}--\cref{as:A5} hold and $\tau \le 1/4$. Further we denote by 
$(u,\phi)$ and $(u_{h,\tau},\phi_{h,\tau})$ the solutions of 
\cref{prob:variational} and \cref{prob:fullydiscreteweight}, respectively. Then 
\begin{align*}
&\norm{u - u_{h,\tau}}{Q_T} + \norm{\phi - \phi_{h,\tau}}{B_T} \\
&\quad\le C_1 h^s \big( \norm{u}{L^2(0,T;H^{1+s}(\Omega))} + \norm{\dt u}{L^2(0,T;H^{1-s}(\Omega)')} 
+ \norm{\phi}{L^2(0,T;H^{s-1/2}(\Gamma))} \big) \\
& \qquad \qquad + C_2 \tau^r \big( \norm{\dt u}{H^r(0,T;H^1(\Omega)')} + \norm{u}{H^r(0,T;H^1(\Omega))} 
+ \norm{\phi}{H^r(0,T;H^{-1/2}(\Gamma))} \big) 
\end{align*}
for all $0 \le s \le 1$ and $0 \le r \le 1$ with
$u\in H^r(0,T;H^{1+s}(\Omega))$, $\dt u\in H^r(0,T;H^{1-s}(\Omega)')$, and $\phi\in H^r(0,T;H^{-1/2+s}(\Gamma))$.
The constants $C_1,C_2>0$ depend only on the domain $\Omega$ and the time horizon $T$.
\end{theorem}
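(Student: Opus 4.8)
The plan is to derive the quantitative estimate purely by combining the abstract quasi-optimality bound of \cref{thm:main} with the spatial approximation estimates of \cref{lem:approxerror} and the temporal projection bounds recalled just above; no new analysis is needed. First I would apply \cref{thm:main} with the comparison functions $\tilde u_{h,\tau}=P_h P^\tau u$ and $\tilde\phi_{h,\tau}=\Pi_h\Pi^\tau\phi$, where $P^\tau,\Pi^\tau$ are the temporal $L^2$-projections and $P_h,\Pi_h$ the spatial ones. Since $P_h$ acts only in space and $P^\tau$ only in time, the two commute; and as $P^\tau u$ is piecewise linear in $t$, vanishes at $t=0$, and takes values in $H^1(\Omega)$, the function $P_h P^\tau u$ is piecewise linear in $t$, vanishes at $t=0$, and takes values in $H^h$, so that indeed $\tilde u_{h,\tau}\in Q_T^{h,\tau}$; likewise $\Pi^\tau\phi$ is piecewise constant in $t$ with values in $B$, hence $\tilde\phi_{h,\tau}\in B_T^{h,\tau}$.

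The key estimates are then obtained by peeling off the temporal projection with a triangle inequality,
\begin{align*}
\norm{u-P_hP^\tau u}{Q_T} &\le \norm{u-P_h u}{Q_T}+\norm{P_h(u-P^\tau u)}{Q_T},\\
\norm{\phi-\Pi_h\Pi^\tau\phi}{B_T} &\le \norm{\phi-\Pi_h\phi}{B_T}+\norm{\Pi_h(\phi-\Pi^\tau\phi)}{B_T}.
\end{align*}
For the first term on each line I would invoke \cref{lem:approxerror} pointwise in $t$ and integrate over $[0,T]$: since $P_h$ commutes with $\dt$, the $Q_T$-norm of $u-P_h u$ splits into $\norm{u-P_h u}{H_T}\le Ch^s\norm{u}{L^2(0,T;H^{1+s}(\Omega))}$ and $\norm{\dt u-P_h\dt u}{H_T'}\le Ch^s\norm{\dt u}{L^2(0,T;H^{1-s}(\Omega)')}$ by the two $u$-estimates in \cref{lem:approxerror}, while the boundary term gives $Ch^s\norm{\phi}{L^2(0,T;H^{-1/2+s}(\Gamma))}$; these produce the $C_1 h^s$ contribution. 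For the second term I would use stability of the spatial projections: $\Pi_h$ is an orthogonal projection on $B=H^{-1/2}(\Gamma)$, hence a contraction there and so bounded on $B_T$, while $P_h$ is $H^1(\Omega)$-stable by \cref{as:A2} and extends to a bounded operator on $H^1(\Omega)'$ by \cref{lem:approxerror}, hence---again commuting with $\dt$---bounded on $Q_T$; therefore these terms are dominated by $\norm{u-P^\tau u}{Q_T}$ and $\norm{\phi-\Pi^\tau\phi}{B_T}$, which the stated temporal bounds estimate by $C\tau^r(\norm{\dt u}{H^r(0,T;H^1(\Omega)')}+\norm{u}{H^r(0,T;H^1(\Omega))})$ and $C\tau^r\norm{\phi}{H^r(0,T;H^{-1/2}(\Gamma))}$, producing the $C_2\tau^r$ contribution. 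Collecting all terms and noting the embeddings $H^r(0,T;H^{1+s}(\Omega))\hookrightarrow L^2(0,T;H^{1+s}(\Omega))\cap H^r(0,T;H^1(\Omega))$ (and similarly for $\dt u$ and $\phi$), which make the regularity hypotheses of the theorem sufficient, yields the claim with $C_1,C_2$ depending only on $\Omega$ and $T$.

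The only genuinely delicate point is the admissibility of the composed comparison function $\tilde u_{h,\tau}=P_hP^\tau u$ together with the control of the cross term $\norm{P_h(u-P^\tau u)}{Q_T}$ by the pure temporal error $\norm{u-P^\tau u}{Q_T}$. This rests on three facts: the commutativity of the spatial and temporal projections; the fact that $P_h$ acts trivially on the time variable and hence commutes with $\dt$ (so it is bounded on $Q_T$ as soon as it is bounded on $H^1(\Omega)$ and on $H^1(\Omega)'$); and precisely the $H^1(\Omega)$- and $H^1(\Omega)'$-stability of $P_h$ supplied by \cref{as:A2} and \cref{lem:approxerror}. Everything else is a routine integration in time of the pointwise approximation bounds.
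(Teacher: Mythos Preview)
Your proposal is correct and essentially identical to the paper's proof: the paper also chooses the comparison functions $\tilde u_{h,\tau}=P^\tau P_h u$ and $\tilde\phi_{h,\tau}=\Pi^\tau\Pi_h\phi$, splits via the same triangle inequality (spatial error plus $P_h u-P^\tau P_h u$), invokes commutativity to rewrite the second term as $P_h(u-P^\tau u)$, and then uses the stability of the spatial projections together with the temporal projection bounds exactly as you describe. Your identification of the delicate point---that the $Q_T$-boundedness of $P_h$ requires both \cref{as:A2} and the $H^1(\Omega)'$-stability from \cref{lem:approxerror}---is precisely what the paper uses as well.
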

\begin{proof}
By the triangle inequality, we obtain
\begin{align*}
\norm{u - P^\tau P_h u}{Q_T} &\le \norm{u - P_h u}{Q_T} 
+ \norm{P_h u - P^\tau P_h u}{Q_T},\\
\norm{\phi - \Pi^\tau \Pi_h\phi}{B_T} &\le \norm{\phi - \Pi_h\phi}{B_T} 
+ \norm{\Pi_h\phi - \Pi^\tau\Pi_h\phi}{B_T}.
\end{align*}
The first term in each line can be estimated by \cref{lem:approxerror}. 
Since the projection operators commute, we can change their order in the second term in each line. 
Then we use the stability of the spatial projection operators guaranteed 
by \cref{lem:approxerror} and the approximation properties of the 
time projections $P^\tau$. We obtain 
\begin{align*}
\norm{P_h u - P^\tau P_h u}{Q_T} 
&\le C \tau^r \big(\norm{\dt u}{H^r(0,T;H^1(\Omega)')} + \norm{u}{H^r(0,T;H^1(\Omega))}\big), \\
\norm{\Pi_h\phi - \Pi^\tau \Pi_h\phi}{B_T} 
&\le C \tau^r \norm{\phi}{H^r(0,T;H^{-1/2}(\Gamma))}.
\end{align*}
Now we apply \cref{thm:main}
with $\tilde u_{h,\tau}=P^\tau P_h u$ and $\tilde \phi_{h,\tau}=\Pi^\tau \Pi_h \phi$. 
The estimates from \cref{lem:approxerror} for the approximation errors yield to the assertion.
\end{proof}

\begin{remark}
From the previous result, we also obtain a corresponding estimate
\begin{align*}
\norm{u &- u_\tau}{Q_T} + \norm{\phi - \phi_\tau}{B_T} \\
&\le C \tau^r \big( \norm{\dt u}{H^r(0,T;H^1(\Omega)')} 
+ \norm{u}{H^r(0,T;H^1(\Omega))} + \norm{\phi}{H^r(0,T;H^{-1/2}(\Gamma))} \big) 
\end{align*}
for the approximation $(u_\tau,\phi_\tau)$ obtained by the time discretization scheme 
without additional Galerkin approximation in space. 
The proof of this result simply follows 
by setting $Q_T^h=Q_T$, $B_T^h=B_T$ and $Q_T^{h,\tau}=Q_T^\tau$, $B_T^{h,\tau}=B_T^\tau$ in the previous theorem. 
Note that the conditions~\cref{as:A2}--\cref{as:A5} are not required for this result to hold true.
\end{remark}

\begin{figure}[tbhp]
\centering 
\subfigure[Mesh for \cref{subsec:bspanalytic}.]{\label{subfig:meshlshape}
\includegraphics[width=.38\textwidth]{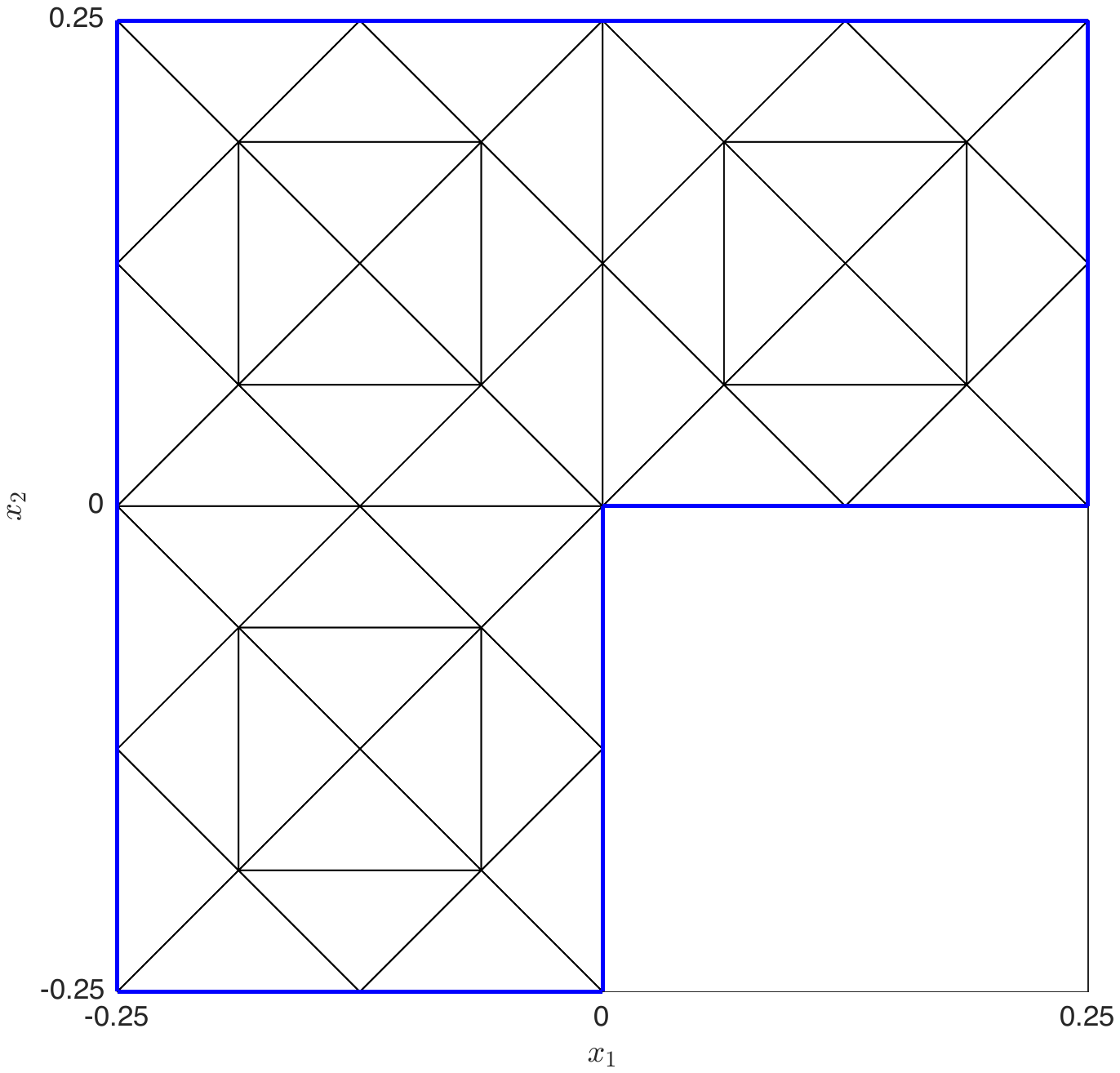}}
\hspace{0.075\textwidth}
\subfigure[Mesh for \cref{subsec:bspcap}.]{\label{subfig:meshcap}
\includegraphics[width=.37\textwidth]{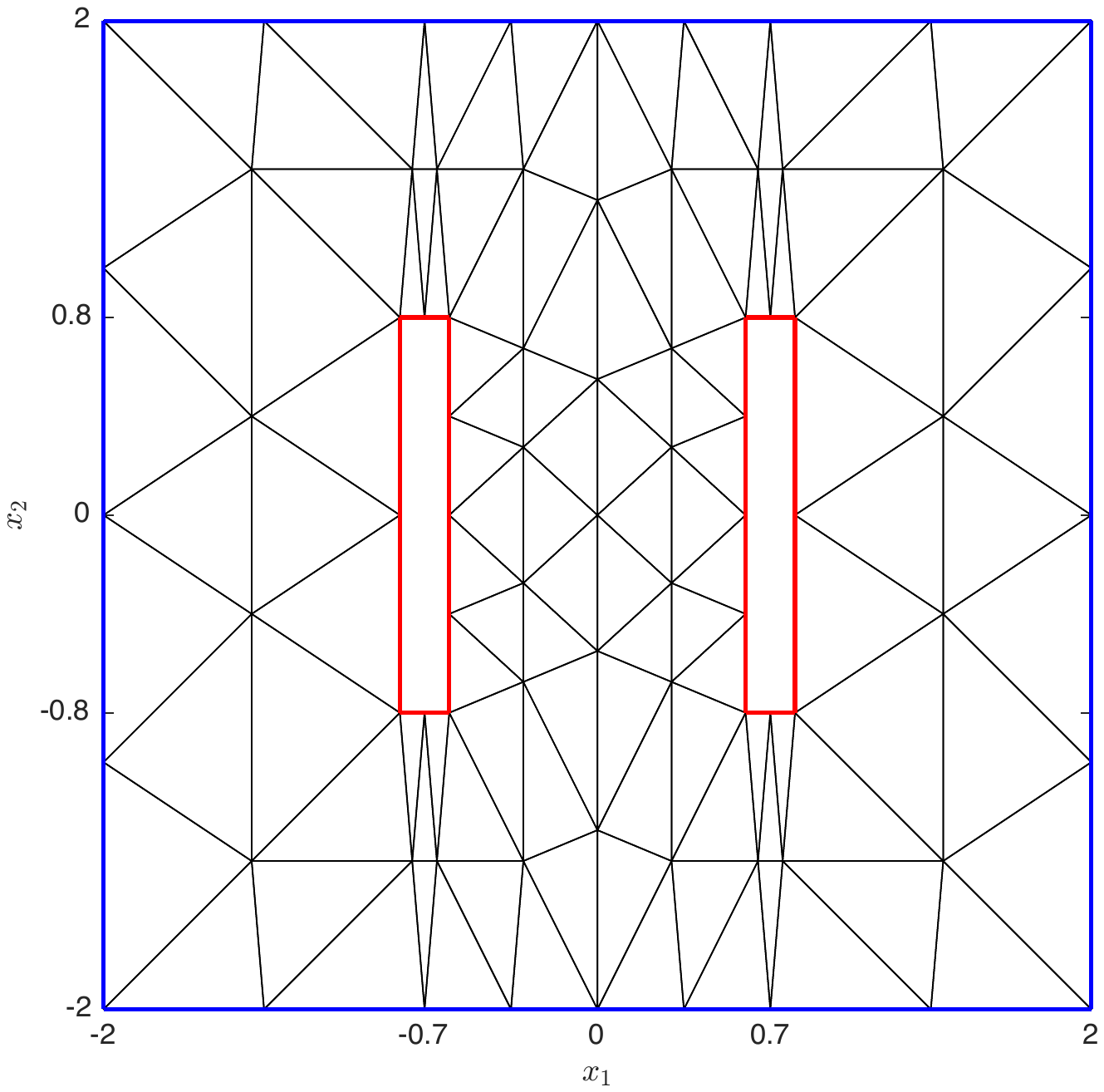}}
\caption{The initial triangle meshes for the examples.
The bold lines are the coupling boundary (blue) and the Dirichlet boundary (red).}
\label{fig:meshes}
\end{figure}
\section{Numerical illustration} 
\label{sec:numerics}
In this section we illustrate our theoretical findings by some numerical examples in $\mathbb{R}^2$ 
with the function spaces $H^h$ and $B^h$ defined in~\cref{eq:spaceHh} and~\cref{eq:spaceBh}, respectively.
For the implementation we use the equivalent system~\cref{eq:vp1htau}--\cref{eq:vp2htau}
instead of \cref{prob:fullydiscreteweight}, see \cref{rem:classicalEuler}.
The right-hand side is built from the model data $\tilde f$, $\tilde g$, 
$\tilde h$ with~\cref{eq:f} and \cref{eq:g}, and with the aid
of the weighted average operator~\cref{eq:hatv}. For these integrals we use Gauss quadrature
in space and time.
The calculations were performed using \textsc{Matlab} utilizing some functions 
from the \textsc{Hilbert}-package~\cite{HILBERT:2013-1} for assembling the matrices resulting 
from the integral operators $\V$ and $\K$. 
\begin{figure}
\centering
\begin{tikzpicture}
\begin{loglogaxis}[width=0.9\textwidth,
xlabel={$1/h$}, ylabel={\small error}, font={\scriptsize},
ymin=1e-6, 
legend style={font=\small, draw=none, fill=none, cells={anchor=west}, legend pos=south west}]
\addplot table [x=invmaxMeshsizeh,y=errorH1dual] {figures/mexicanhatfemsinus19112017.dat};
\addplot table [x=invmaxMeshsizeh,y=errorenergyV] {figures/mexicanhatfemsinus19112017.dat};
\addplot table [x=invmaxMeshsizeh,y=errorenergyVproj] {figures/mexicanhatfemsinus19112017.dat};
\addplot table [x=invmaxMeshsizeh,y=errorL2] {figures/mexicanhatfemsinus19112017.dat};
\addplot table [x=invmaxMeshsizeh,y=errorL2proj] {figures/mexicanhatfemsinus19112017.dat};
\addplot table [x=invmaxMeshsizeh,y=errorH1semi] {figures/mexicanhatfemsinus19112017.dat};
\addplot table [x=invmaxMeshsizeh,y=errorH1semiproj] {figures/mexicanhatfemsinus19112017.dat};
\addplot table [x=invmaxMeshsizeh,y=globalEnergy] {figures/mexicanhatfemsinus19112017.dat};
\addplot table [x=invmaxMeshsizeh,y=globalEnergyproj] {figures/mexicanhatfemsinus19112017.dat};

\logLogSlopeTriangle{0.925}{0.2}{0.49}{-1}{black}{\scriptsize};
\logLogSlopeTriangle{0.925}{0.2}{0.66}{-1}{black}{\scriptsize};

\legend{
$\norm{z_h^a}{H_T}$,
$\norm{\phi - \phi_{h,\tau}}{L^2(0,T;\V)}$,
$\norm{\overline{\phi}_h - \phi_{h,\tau}}{L^2(0,T;\V)}$,
$\norm{u-u_{h,\tau}}{L^2(0,T;L^2(\Omega))}$,
$\norm{\overline{u}_h-u_{h,\tau}}{L^2(0,T;L^2(\Omega))}$,
$\norm{\nabla(u-u_{h,\tau})}{L^2(0,T;L^2(\Omega))}$,
$\norm{\nabla(\overline{u}_h-u_{h,\tau})}{L^2(0,T;L^2(\Omega))}$,
$(\norm{u-u_{h,\tau}}{H_T}^2+\norm{z_h^a}{H_T}^2)^{1/2}+\norm{\phi - \phi_{h,\tau}}{L^2(0,T;\V)}$,
$(\norm{\overline{u}_h-u_{h,\tau}}{H_T}^2+\norm{z_h^a}{H_T}^2)^{1/2}+\norm{\overline{\phi}_h - \phi_{h,\tau}}{L^2(0,T;\V)}$}
\end{loglogaxis}
\end{tikzpicture}
\caption{The different error components of the solutions   
$u_{h,\tau}$ and $\phi_{h,\tau}$
for uniform refinement in time and space 
for the smooth example in \cref{subsubsec:bsp1}.
The added energy error norms 
$(\norm{u-u_{h,\tau}}{H_T}^2+\norm{z_h^a}{H_T}^2)^{1/2}+\norm{\phi - \phi_{h,\tau}}{L^2(0,T;\V)}$
and
$(\norm{\overline{u}_h-u_{h,\tau}}{H_T}^2+\norm{z_h^a}{H_T}^2)^{1/2}+\norm{\overline{\phi}_h - \phi_{h,\tau}}{L^2(0,T;\V)}$
show the first order convergence as predicted in \cref{th:errorestimateorder}.
}
\label{fig:errorbsp1}
\end{figure}
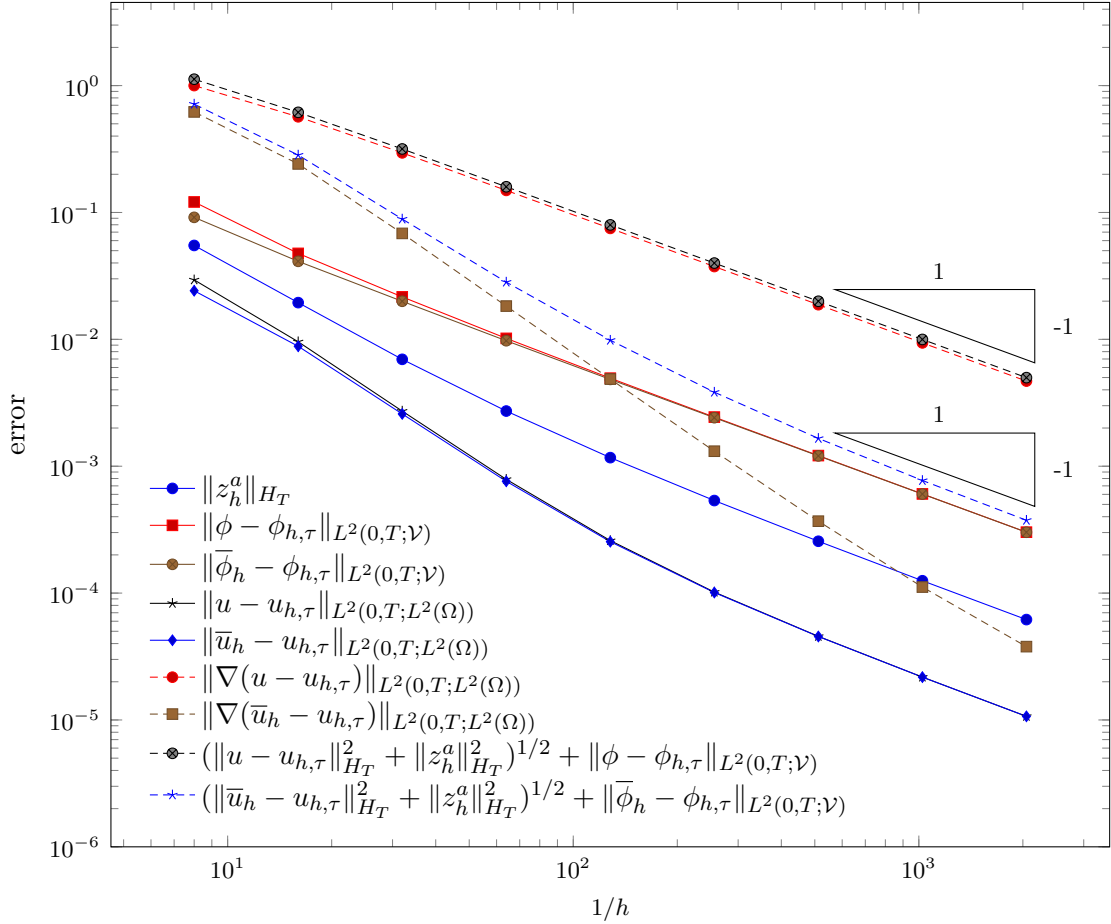
\subsection{Tests with analytical solutions}
\label{subsec:bspanalytic}
In the following, we discuss the convergence behaviour for 
three examples with analytical solutions. 
We consider the coupling problem~\cref{eq:model1}--\cref{eq:model6} 
on the classical L-shape $\Omega= (-1/4, 1/4)^2\setminus [0, 1/4] \times [-1/4, 0]$
and the time interval $[0,1]$. The uniform start triangulation (triangles) 
is plotted in \cref{subfig:meshlshape} with $h=0.125$. We use uniform time stepping, 
in particular, we begin with $\tau^n=\tau=0.05$. 
The refinement will be uniform for both, the space and the time grid, and
simultaneously. 
For all three
examples we prescribe the same analytical solution in the exterior domain $\Omega_e$, namely
\begin{align*}
u_e(x_1,x_2,t)&=(1-t)\log \sqrt{(x_1+0.125)^2+(x_2-0.125)^2}.
\end{align*}
Note that this solution is smooth in $\Omega_e$.
With the interior solutions given below we will calculate 
the right-hand side $\tilde f$ and the jumps $\tilde g$ and $\tilde h$
(from $u = u_e + \tilde g$ and 
$\partial_{\normal} u = \partial_{\normal} u_e + \tilde h$)
appropriately.
For the error discussion we also consider
the $L^2$-projected analytical solutions $\overline{u}_h(t)\in H^h$ of $u(t)$
and $\overline{\phi}_h(t)\in B^h$ of $\phi(t)$ for a fixed but arbitrary $t$. 
Note that the prescribed exterior solution
guarantees at least $\phi(t)\in L^2(\Gamma)$.
Hence, we may estimate the error as
\begin{align}
 \label{eq:l2splitting1}
 \norm{u-u_{h,\tau}}{Q_T}&\leq  \norm{u-\overline{u}_h}{Q_T}+\norm{\overline{u}_h-u_{h,\tau}}{Q_T},\\
 \label{eq:l2splitting2}
 \norm{\phi-\phi_{h,\tau}}{B_T}&\leq  \norm{\phi-\overline{\phi}_h}{B_T}
 +\norm{\overline{\phi}_h-\phi_{h,\tau}}{B_T}.
\end{align}
The convergence order of $\norm{u-\overline{u}_h}{Q_T}$ and
$\norm{\phi-\overline{\phi}_h}{B_T}$ are known a~priori. 
With the discrete error $e_h(t):=\overline{u}_h(t)-u_{h,\tau}(t)$ 
we can estimate the non computable dual norm 
$\norm{\dt e_h}{H'_T}^2=\int_0^T\norm{\dt e_h}{H'}^2$ in the following way.
Let $z_h^a\in H^h$ be the solution to the auxiliary problem
\begin{align*}
 \product{\nabla z_h^a}{\nabla v_h}_{\Omega} + \product{z_h^a}{v_h}_{\Omega} 
 =\product{\dt e_h}{v_h}_{\Omega},
\end{align*}
with $v_h=P_h v$ for all $v\in H$ and $P_h$ being the $L^2$-projection introduced
in \cref{sec:fembem}. 
Then the $H^1$-stability of $P_h$ and the definition of the auxiliary problem
lead to
\begin{align*}
 \norm{\dt e_h}{H^1(\Omega)'}
 &=\sup_{0\not =v\in H^1(\Omega)}\frac{\product{\dt e_h}{v}_{\Omega}}{\norm{v}{H^1(\Omega)}} \\
&=\sup_{0\not =v\in H^1(\Omega)}\left(\frac{\product{\dt e_h}{v-P_h v}_{\Omega}}{\norm{v}{H^1(\Omega)}}  
+ \frac{\product{\dt e_h}{P_h v}_{\Omega}}{\norm{v}{H^1(\Omega)}} \right) \\
&\leq \sup_{0\not =v\in H^1(\Omega)}\frac{\norm{z_h^a}{H^1(\Omega)}
\norm{P_h v}{H^1(\Omega)}}{\norm{v}{H^1(\Omega)}} \leq C_P\norm{z_h^a}{H^1(\Omega)}
\end{align*}
with the constant $C_P>0$.
Thus $\norm{z_h^a}{H_T}$ is an upper bound for $\norm{\dt e_h}{H'_T}$.
The norm $\norm{\phi(t)-\phi_{h,\tau}(t)}{B}$ is also not computable.
Hence we may use the equivalent norm 
\begin{align*}
 \norm{\phi(t)-\phi_{h,\tau}(t)}{B} \sim  
 \norm{\phi(t)-\phi_{h,\tau}(t)}{\V} := \dual{\V(\phi(t)-\phi_{h,\tau}(t))}{\phi(t)-\phi_{h,\tau}(t)}_\Gamma,
\end{align*}
see~\cite{Erath:2010-phd} for details.
Thus $\norm{\phi-\phi_{h,\tau}}{L^2(0,T;\V)}$ is an equivalent norm to
$\norm{\phi-\phi_{h,\tau}}{B_T}$.
We approximate all other spatial norms by Gaussian quadrature or with the matrices from the discretization.
The time integral in the Bochner-Sobolev norms is also computed with a Gaussian quadrature.
For the energy norm we therefore present the upper bound 
\begin{align*}
 \norm{u-u_{h,\tau}}{Q_T}+\norm{\phi-\phi_{h,\tau}}{B_T}
 &\leq  (\norm{u-u_{h,\tau}}{H_T}^2+\norm{z_h^a}{H_T}^2)^{1/2}
 +\norm{\phi - \phi_{h,\tau}}{L^2(0,T;\V)}.
\end{align*}
Furthermore, with respect to the error splitting~\cref{eq:l2splitting1}--\cref{eq:l2splitting2}
we also calculate the error
\begin{align*}
(\norm{\overline{u}_h-u_{h,\tau}}{H_T}^2+\norm{z_h^a}{H_T}^2)^{1/2}
+\norm{\overline{\phi}_h - \phi_{h,\tau}}{L^2(0,T;\V)} 
\end{align*}
with the $L^2$-projected analytical solutions $\overline{u}_h(t)\in H^h$ of $u(t)$
and $\overline{\phi}_h(t)\in B^h$ of $\phi(t)$.

\subsubsection{Smooth solution}
\label{subsubsec:bsp1}
For the first example we use the interior solution
\begin{align*}
u(x_1,x_2,t)&=\sin(2\pi t)(1-100 x_1^2-100 x_2^2)e^{-50(x_1^2+x_2^2)}.
\end{align*}
Hence, both, $u$ and $u_e$ are smooth and according to
\cref{th:errorestimateorder}
we expect the optimal
convergence rate $\O(h+\tau)$ 
which is indeed observed in \cref{fig:errorbsp1}.
%
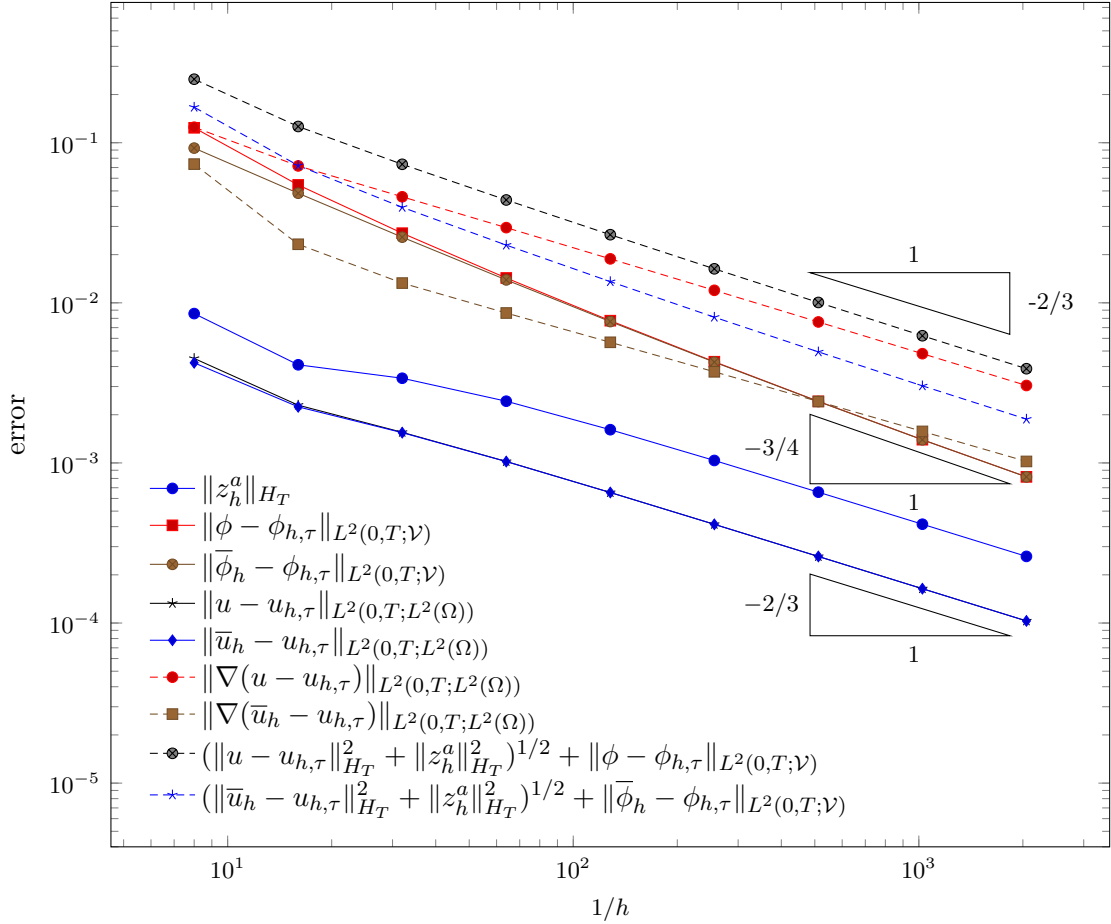
\begin{figure}
\centering
\begin{tikzpicture}
\begin{loglogaxis}[width=0.9\textwidth,
xlabel={$1/h$}, ylabel={\small error}, font={\scriptsize}, 
ymin=4e-6,
legend style={font=\small, draw=none, fill=none, cells={anchor=west}, legend pos=south west}]
\addplot table [x=invmaxMeshsizeh,y=errorH1dual] {figures/LShapeunsmooth19112017.dat};
\addplot table [x=invmaxMeshsizeh,y=errorenergyV] {figures/LShapeunsmooth19112017.dat};
\addplot table [x=invmaxMeshsizeh,y=errorenergyVproj] {figures/LShapeunsmooth19112017.dat};
\addplot table [x=invmaxMeshsizeh,y=errorL2] {figures/LShapeunsmooth19112017.dat};
\addplot table [x=invmaxMeshsizeh,y=errorL2proj] {figures/LShapeunsmooth19112017.dat};
\addplot table [x=invmaxMeshsizeh,y=errorH1semi] {figures/LShapeunsmooth19112017.dat};
\addplot table [x=invmaxMeshsizeh,y=errorH1semiproj] {figures/LShapeunsmooth19112017.dat};
\addplot table [x=invmaxMeshsizeh,y=globalEnergy] {figures/LShapeunsmooth19112017.dat};
\addplot table [x=invmaxMeshsizeh,y=globalEnergyproj] {figures/LShapeunsmooth19112017.dat};

\logLogSlopeTriangle{0.9}{0.2}{0.68}{-2/3}{black}{\scriptsize};
\logLogSlopeTrianglelow{0.9}{0.2}{0.25}{-2/3}{black}{\scriptsize};
\logLogSlopeTrianglelow{0.9}{0.2}{0.43}{-3/4}{black}{\scriptsize};

\legend{
$\norm{z_h^a}{H_T}$,
$\norm{\phi - \phi_{h,\tau}}{L^2(0,T;\V)}$,
$\norm{\overline{\phi}_h - \phi_{h,\tau}}{L^2(0,T;\V)}$,
$\norm{u-u_{h,\tau}}{L^2(0,T;L^2(\Omega))}$,
$\norm{\overline{u}_h-u_{h,\tau}}{L^2(0,T;L^2(\Omega))}$,
$\norm{\nabla(u-u_{h,\tau})}{L^2(0,T;L^2(\Omega))}$,
$\norm{\nabla(\overline{u}_h-u_{h,\tau})}{L^2(0,T;L^2(\Omega))}$,
$(\norm{u-u_{h,\tau}}{H_T}^2+\norm{z_h^a}{H_T}^2)^{1/2}+\norm{\phi - \phi_{h,\tau}}{L^2(0,T;\V)}$,
$(\norm{\overline{u}_h-u_{h,\tau}}{H_T}^2+\norm{z_h^a}{H_T}^2)^{1/2}+\norm{\overline{\phi}_h - \phi_{h,\tau}}{L^2(0,T;\V)}$}
\end{loglogaxis}
\end{tikzpicture}
\caption{The different error components of the solutions   
$u_{h,\tau}$ and $\phi_{h,\tau}$
for uniform refinement in time and space 
for the example with a spatial generic singularity of the interior solution in \cref{subsubsec:bsp2}.
The added energy error norms 
$(\norm{u-u_{h,\tau}}{H_T}^2+\norm{z_h^a}{H_T}^2)^{1/2}+\norm{\phi - \phi_{h,\tau}}{L^2(0,T;\V)}$
and
$(\norm{\overline{u}_h-u_{h,\tau}}{H_T}^2+\norm{z_h^a}{H_T}^2)^{1/2}+\norm{\overline{\phi}_h - \phi_{h,\tau}}{L^2(0,T;\V)}$
show the reduced convergence order as predicted in \cref{th:errorestimateorder}.
}
\label{fig:errorbsp2}
\end{figure}
\subsubsection{Generic singularity at the reentrant corner}
\label{subsubsec:bsp2}
For the second example, we choose the analytical solution 
\begin{align*}
u(x_1,x_2,t) &=(1+t^2) r^{2/3} \sin(2\varphi/3)
\end{align*}
with the polar coordinates
$(x_1,x_2) = r(\cos\varphi, \sin\varphi)$, $r\in\RR_+$ and $\varphi\in [0,2\pi)$.
This solution is a classical test solution in the spatial components and exhibits 
a generic singularity at the reentrant corner $(0,0)$ of $\Omega$. Note that $\Delta u = 0$
and that the function $u(x_1,x_2,\cdot)$ is only 
in $H^{1+2/3-\varepsilon}(\Omega)$ for $\varepsilon>0$. 
As analyzed 
in \cref{th:errorestimateorder} and observed
in \cref{fig:errorbsp2} 
we obtain a reduced convergence rate of $\O(h^{2/3}+\tau)$.
%

\begin{figure}
\centering
\begin{tikzpicture}
\begin{loglogaxis}[width=0.9\textwidth,
xlabel={\small $1/\tau$=\# time intervals}, ylabel={\small error}, font={\scriptsize}, 
ymin=1e-6,
legend style={font=\small, draw=none, fill=none, cells={anchor=west}, legend pos=south west}]
\addplot table [x=numberTimeintervals,y=errorH1dual] {figures/mexicanhatfem19112017.dat};
\addplot table [x=numberTimeintervals,y=errorenergyV] {figures/mexicanhatfem19112017.dat};
\addplot table [x=numberTimeintervals,y=errorenergyVproj] {figures/mexicanhatfem19112017.dat};
\addplot table [x=numberTimeintervals,y=errorL2] {figures/mexicanhatfem19112017.dat};
\addplot table [x=numberTimeintervals,y=errorL2proj] {figures/mexicanhatfem19112017.dat};
\addplot table [x=numberTimeintervals,y=errorH1semi] {figures/mexicanhatfem19112017.dat};
\addplot table [x=numberTimeintervals,y=errorH1semiproj] {figures/mexicanhatfem19112017.dat};
\addplot table [x=numberTimeintervals,y=globalEnergy] {figures/mexicanhatfem19112017.dat};
\addplot table [x=numberTimeintervals,y=globalEnergyproj] {figures/mexicanhatfem19112017.dat};

\logLogSlopeTriangle{0.925}{0.2}{0.49}{-1/3}{black}{\scriptsize};
\logLogSlopeTriangle{0.925}{0.2}{0.67}{-1}{black}{\scriptsize};
\logLogSlopeTriangle{0.925}{0.15}{0.35}{-3/2}{black}{\scriptsize};
\logLogSlopeTriangle{0.925}{0.15}{0.175}{-3/4}{black}{\scriptsize};

\legend{
$\norm{z_h^a}{H_T}$,
$\norm{\phi - \phi_{h,\tau}}{L^2(0,T;\V)}$,
$\norm{\overline{\phi}_h - \phi_{h,\tau}}{L^2(0,T;\V)}$,
$\norm{u-u_{h,\tau}}{L^2(0,T;L^2(\Omega))}$,
$\norm{\overline{u}_h-u_{h,\tau}}{L^2(0,T;L^2(\Omega))}$,
$\norm{\nabla(u-u_{h,\tau})}{L^2(0,T;L^2(\Omega))}$,
$\norm{\nabla(\overline{u}_h-u_{h,\tau})}{L^2(0,T;L^2(\Omega))}$,
$(\norm{u-u_{h,\tau}}{H_T}^2+\norm{z_h^a}{H_T}^2)^{1/2}+\norm{\phi - \phi_{h,\tau}}{L^2(0,T;\V)}$,
$(\norm{\overline{u}_h-u_{h,\tau}}{H_T}^2+\norm{z_h^a}{H_T}^2)^{1/2}+\norm{\overline{\phi}_h - \phi_{h,\tau}}{L^2(0,T;\V)}$}
\end{loglogaxis}
\end{tikzpicture}
\caption{The different error components of the solutions   
$u_{h,\tau}$ and $\phi_{h,\tau}$
for uniform refinement in time and space 
for the example with a singularity in the time component 
of the interior solution in \cref{subsubsec:bsp3}.
The added energy error norms 
$(\norm{u-u_{h,\tau}}{H_T}^2+\norm{z_h^a}{H_T}^2)^{1/2}+\norm{\phi - \phi_{h,\tau}}{L^2(0,T;\V)}$
and
$(\norm{\overline{u}_h-u_{h,\tau}}{H_T}^2+\norm{z_h^a}{H_T}^2)^{1/2}+\norm{\overline{\phi}_h - \phi_{h,\tau}}{L^2(0,T;\V)}$
show the reduced convergence order as predicted in \cref{th:errorestimateorder}.
}
\label{fig:errorbsp3}
\end{figure}
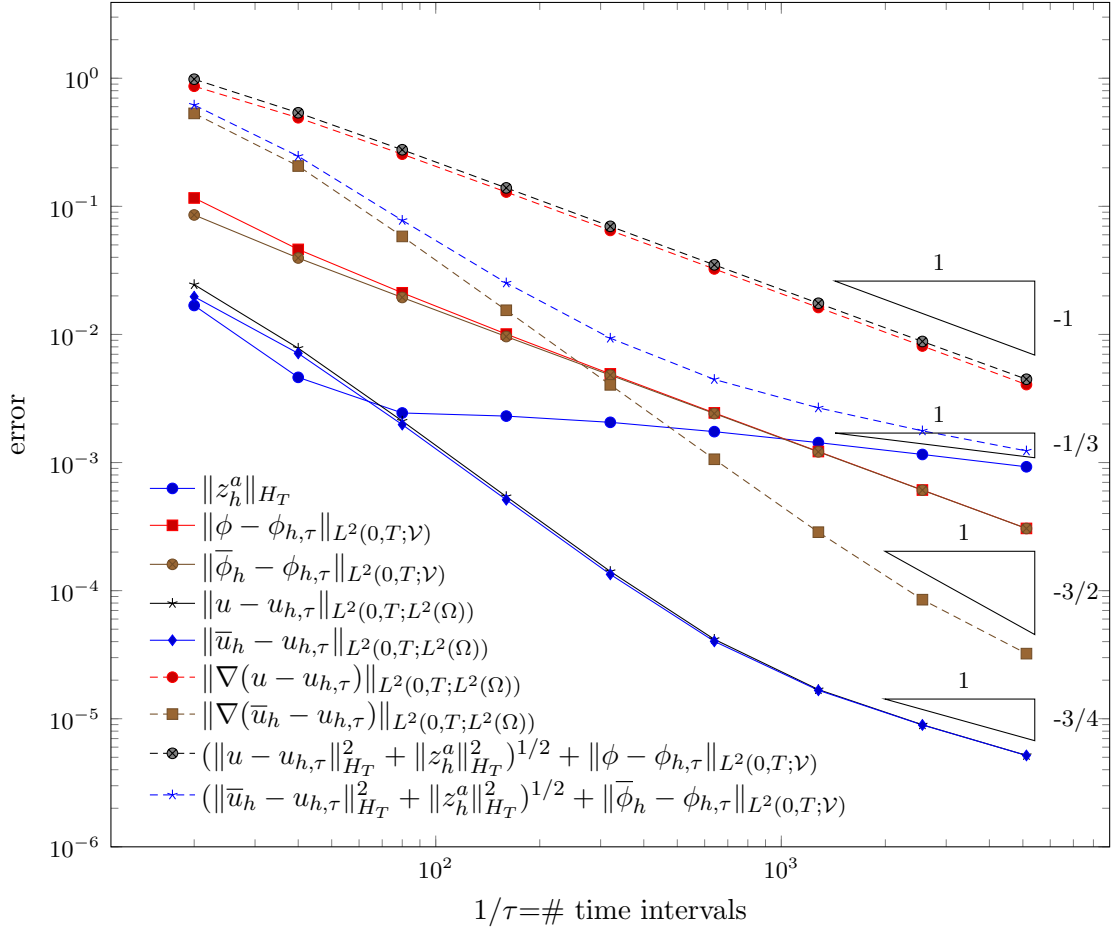

\subsubsection{Non-smooth function in time}
\label{subsubsec:bsp3}
The third example is less regular in time, but smooth in space, and reads
\begin{align*}
u(x_1,x_2,t) =t^{5/6} (1-100 x_1^2-100 x_2^2)e^{-50(x_1^2+x_2^2)}.
\end{align*}
Note that the function $u(x,\cdot)$ is only in $H^{4/3}(0,T)$. 
According to our analysis we expect
a convergence rate of $\O(h+\tau^{1/3})$. 
We plot the convergence order with respect to the number of time intervals ($=1/\tau$)
in \cref{fig:errorbsp3}.
Note that the energy norm error $\norm{u-u_{h,\tau}}{Q_T}+\norm{\phi-\phi_{h,\tau})}{B_T}$ represented by
$(\norm{u-u_{h,\tau}}{H_T}^2+\norm{z_h^a}{H_T}^2)^{1/2}+\norm{\phi - \phi_{h,\tau}}{L^2(0,T;\V)}$
seems to have a misleading convergence order of $\O(\tau)$. 
The error component $\norm{z_h^a}{H_T}$, representing
the dual norm error $\norm{\dt(u-u_{h,\tau})}{H'_T}$, has convergence order
$\O(\tau^{1/3})$. With respect to 
$\norm{\nabla (u-u_{h,\tau})}{L^2(0,T;L^2(\Omega))}$ this error component is rather small.
Hence the predicted convergence rate $\O(h+\tau^{1/3})$ would be observed asymptotically which can not
be visualized here due to computational restrictions. 
\begin{figure}[tbhp]
\centering 
\subfigure[Solution at $t=0.0125$.]{\label{fig:acap}\includegraphics[width=.3\textwidth]{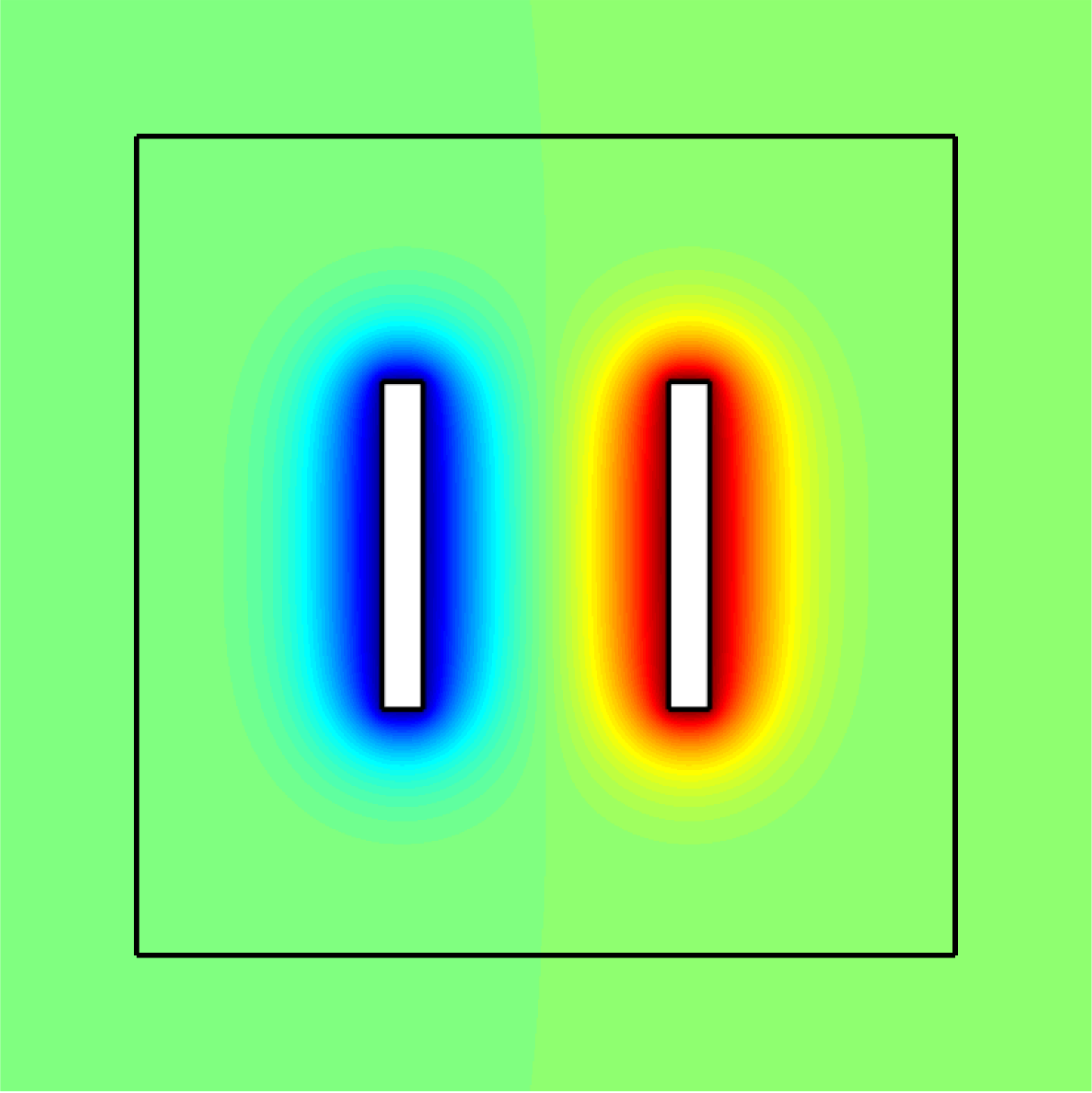}}
\hspace{.025\textwidth} 
\subfigure[Solution at $t=0.05$.]{\label{fig:bcap}\includegraphics[width=.3\textwidth]{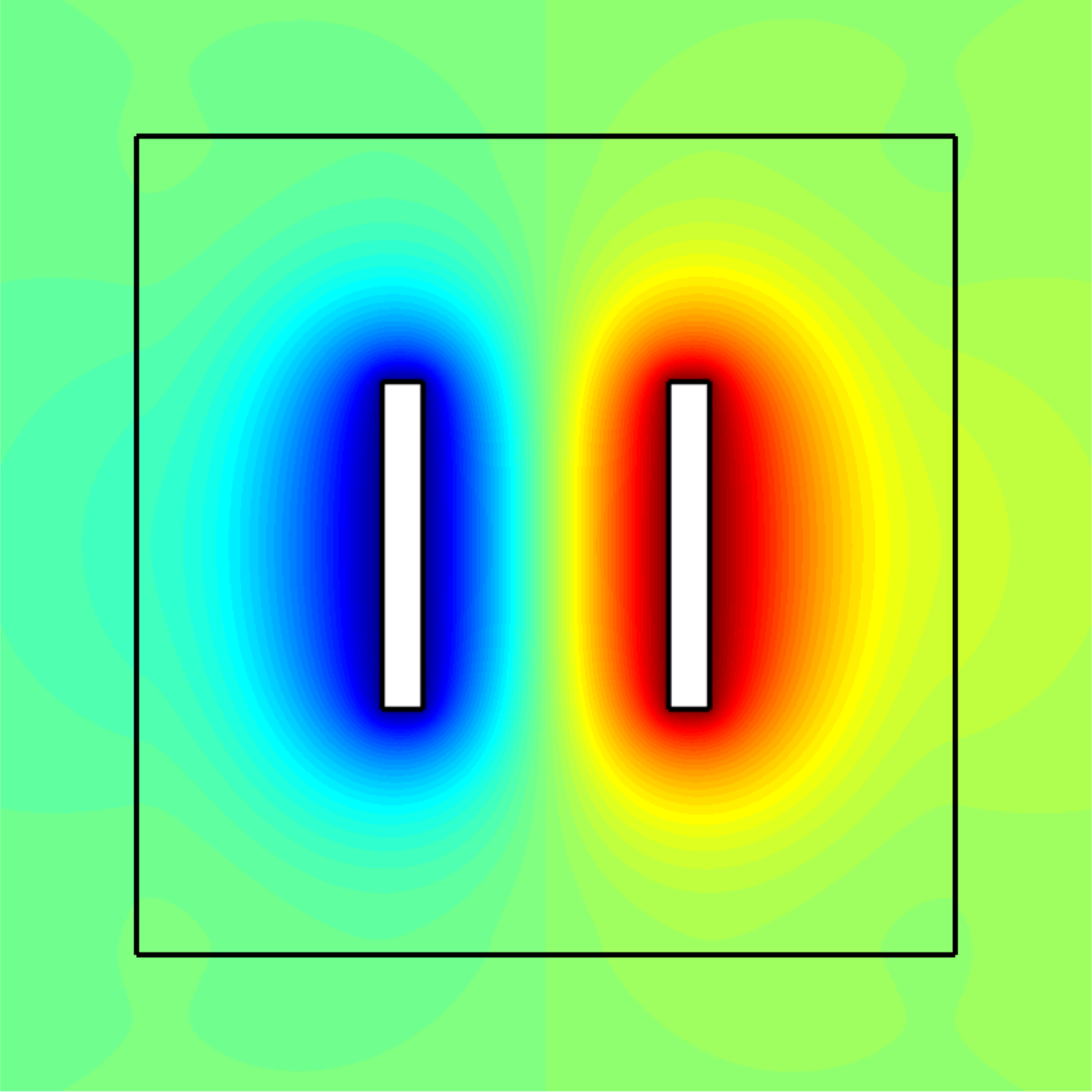}}
\hspace{.025\textwidth}  
\subfigure[Solution at $t=0.4875$.]{\label{fig:ccap}\includegraphics[width=.3\textwidth]{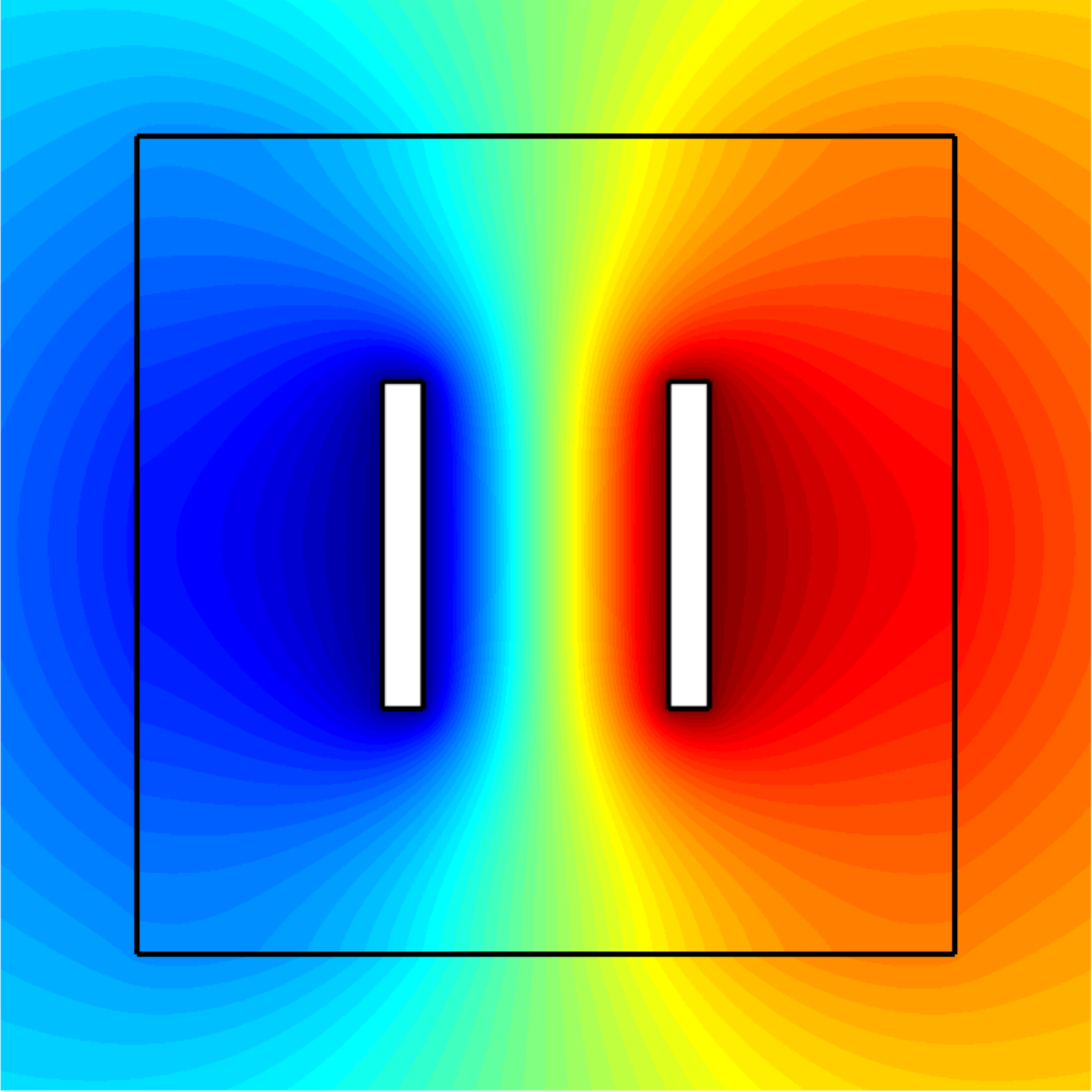}} 
\vspace{.1\baselineskip}
\subfigure[Solution at $t=0.5$.]{\label{fig:dcap}\includegraphics[width=.3\textwidth]{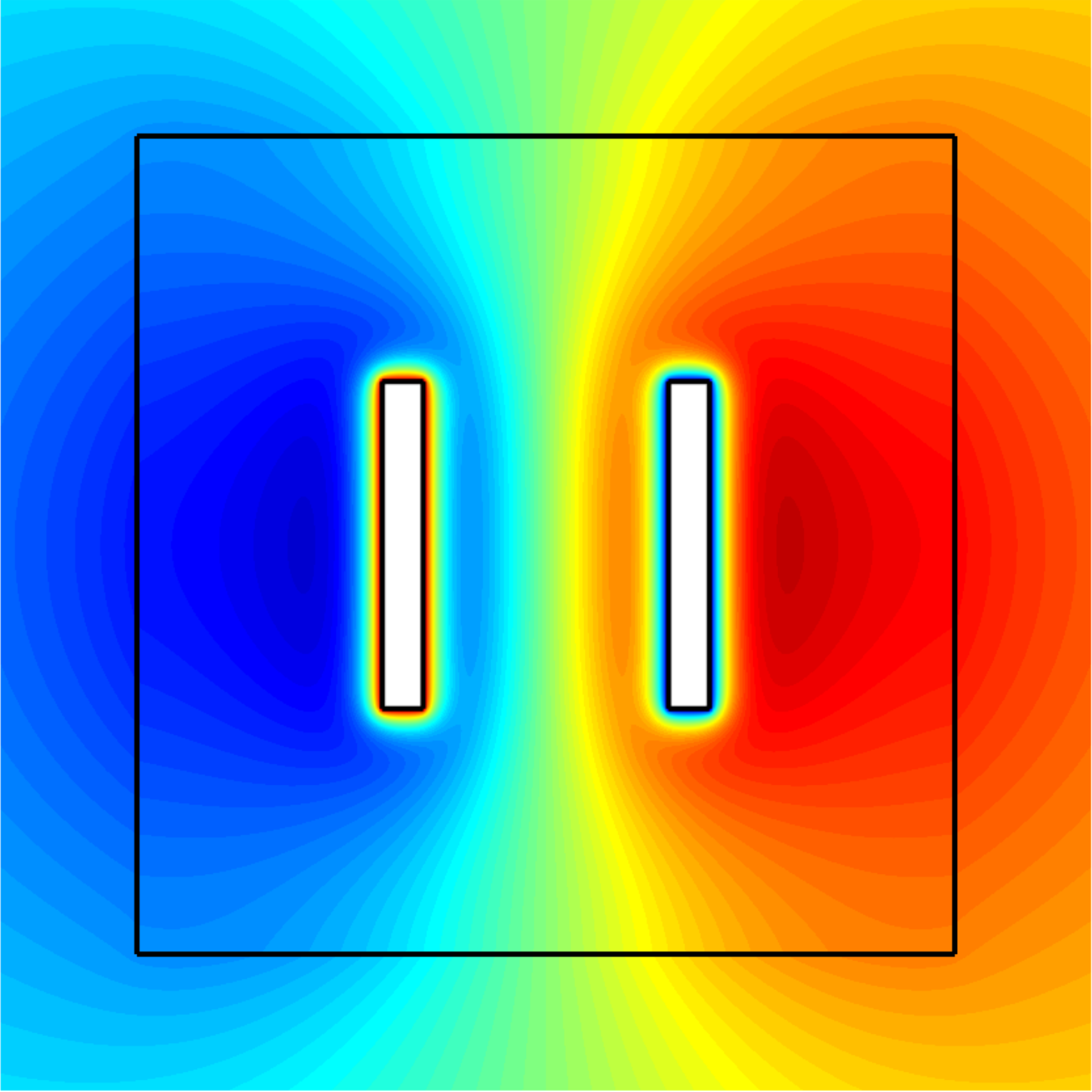}} 
\hspace{.025\textwidth} 
\subfigure[Solution at $t=0.6$.]{\label{fig:fcap}\includegraphics[width=.3\textwidth]{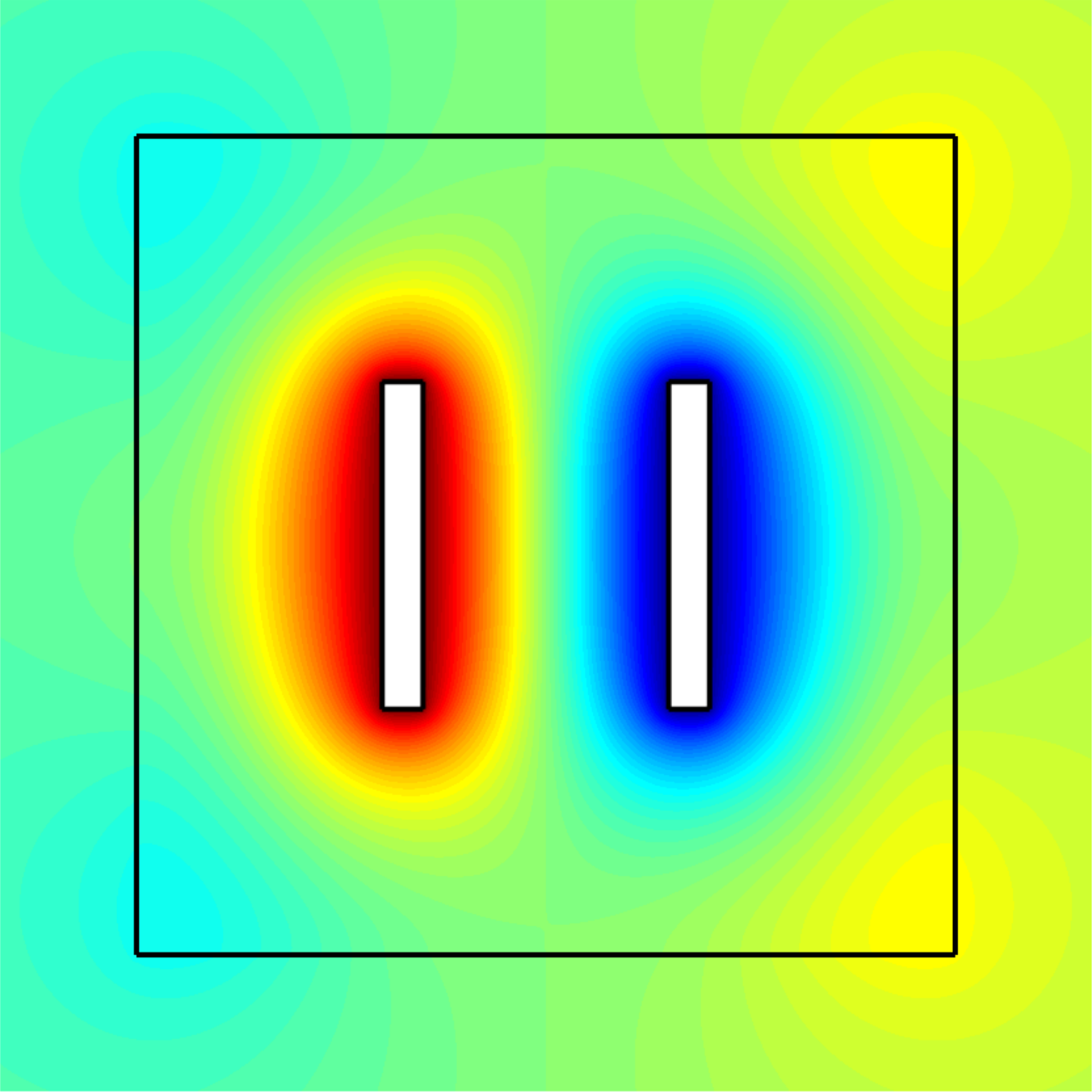}}
\hspace{.025\textwidth}  
\subfigure[Solution at $t=1.0$.]{\label{fig:gcap}\includegraphics[width=.3\textwidth]{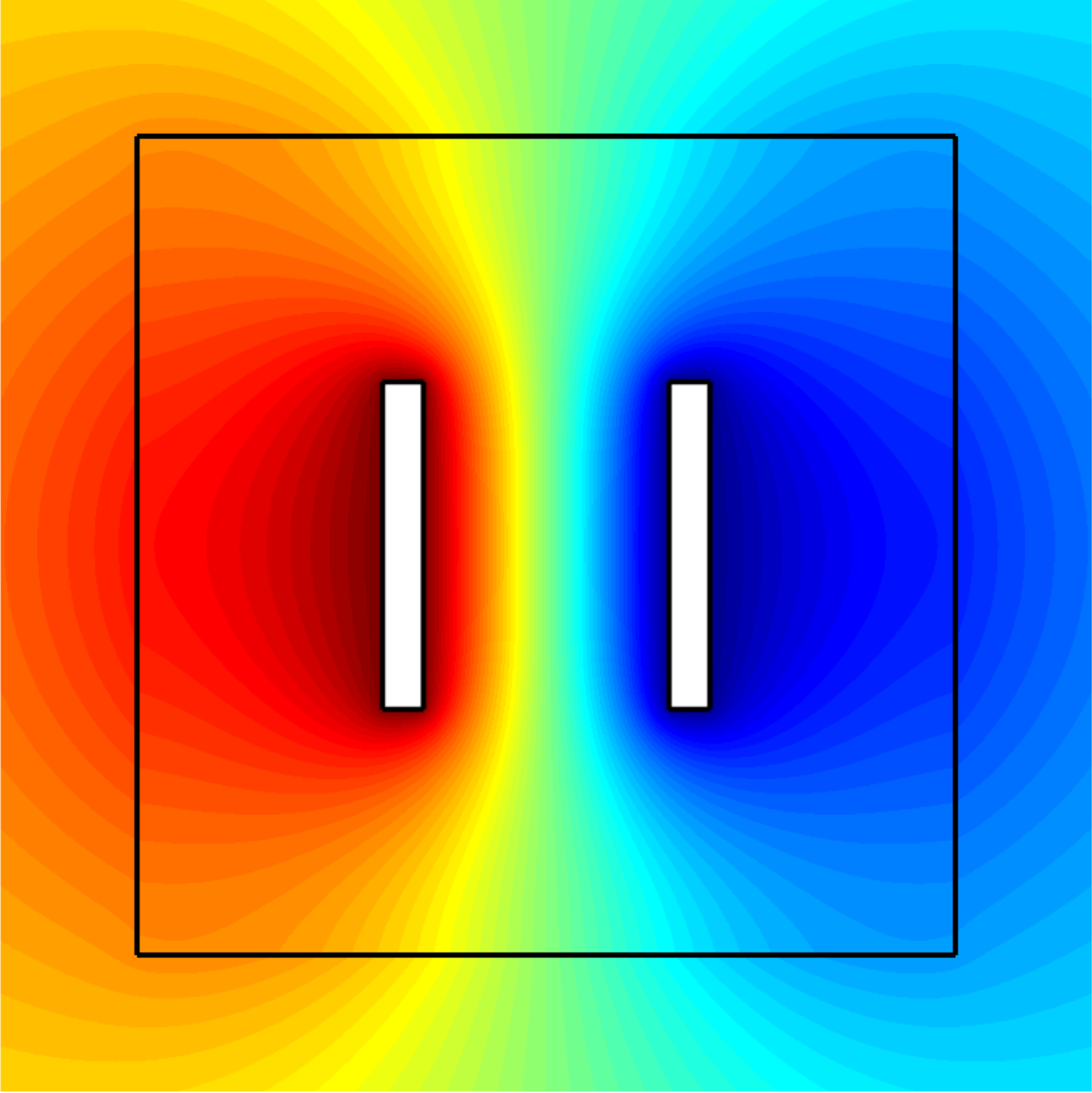}}
\vspace{.25\baselineskip}
\includegraphics[width=0.5\textwidth,clip]{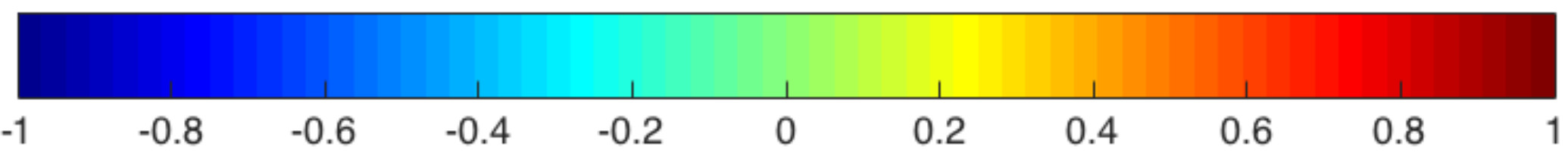}
\caption{Solution of the capacitor example in \cref{subsec:bspcap} at different times.}
\label{fig:cap}
\end{figure} 

\begin{figure}[tbhp]
\centering 
\includegraphics[width=.7\textwidth]{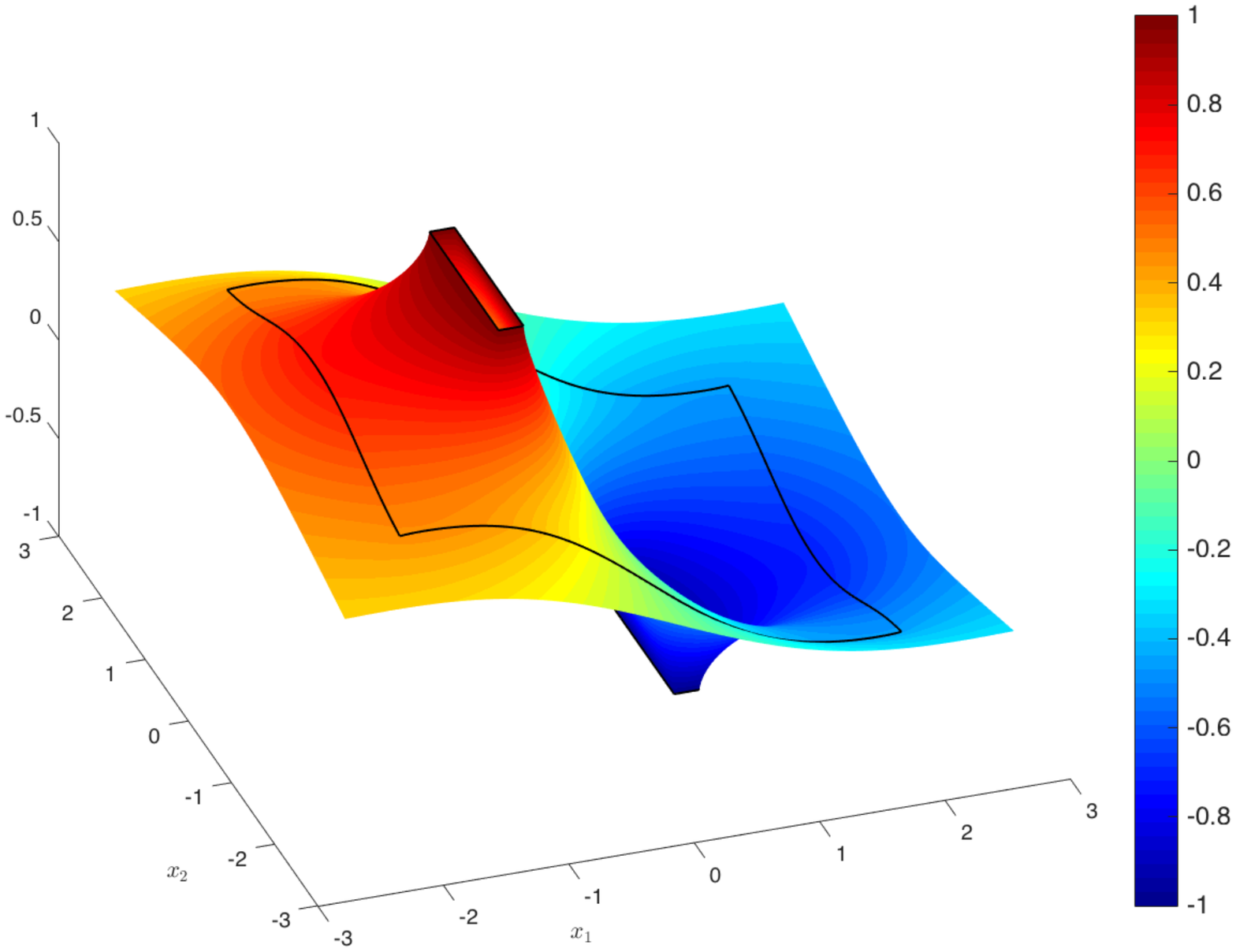}
\caption{Solution of the capacitor example in \cref{subsec:bspcap} at the end $T=1$.}
\label{fig:cap3D}
\end{figure} 
 
 \subsection{Quasi-electrostatic problem}\label{subsec:bspcap}
In the last example we want to apply our numerical scheme to a more practical
problem~\cite[Example 8.2]{Carstensen:1999}. 
The idea behind the problem 
is to model the potential of a capacitor
in an unbounded domain
with two electrodes $\Omega_{D,1}=[-0.8,-0.6]\times [-0.8,0,8]$
and $\Omega_{D,2}=[0.6,0.8]\times [-0.8,0,8]$.
For this we consider our model problem~\cref{eq:model1}--\cref{eq:model6}.
with the interior domain $\Omega=(-2,2)^2\backslash\big(\Omega_{D,1}\cup \Omega_{D,2}\big)$
and the exterior domain $\Omega_e=\mathbb{R}^2\backslash [-2,2]^2$, see also \cref{subfig:meshcap}.
We choose $\tilde f=0$, $\tilde g=0$, $\tilde h=0$, and the initial field $u(\cdot,0)=0$.
Contrary to~\cref{eq:model1} we allow a diffusion coefficient in the interior domain $\Omega$ 
of $5$ instead of $1$. 
Furthermore, we define $\Gamma_{D,1}:=\partial\Omega_{D,1}$ and $\Gamma_{D,2}:=\partial\Omega_{D,2}$.
Thus the coupling boundary reads 
$\Gamma=\partial \Omega_e=\partial \Omega\backslash\big(\Gamma_{D,1}\cup\Gamma_{D,2}\big)$.
For the charge at the electrode boundaries $\Gamma_{D,1}$ and $\Gamma_{D,2}$,
which are Dirichlet boundaries in the model problem, we
choose
\begin{alignat}{2}
u(x,t)&= 
\begin{cases} -1 &\text{for}\quad t<0.5 \\ 
\phantom{-}1 &\text{for}\quad t \geq 0.5 
\end{cases} 
&\qquad& \text{on } \Gamma_{D,1} \times (0,1), \\
 u(x,t)&= 
 \begin{cases} \phantom{-}1 &\text{for}\quad t<0.5 \\ 
 -1 &\text{for}\quad t \geq 0.5 
 \end{cases}     
 &\qquad& \text{on } \Gamma_{D,2} \times (0,1).
\end{alignat}
Hence the charges are fixed to $\pm 1$ 
at Dirichlet boundary $\Gamma_{D,1} \cup \Gamma_{D,2}$
and the polarity is reversed at $t=0.5$.
In \cref{fig:cap} we plot the interior and part of the exterior solution at different times after
$5$ uniform refinements of the triangulation \cref{subfig:meshcap}, i.e., $h=0.03125$
and $\tau=0.0015625$. We use the representation formula~\cref{eq:repformular} with the discrete solution
$u_{h,\tau}|_\Gamma$ and $\phi_{h,\tau}$ to get 
the approximation of $u_e$ in $\Omega_e$.  
The figure sequence shows how the electrical field is building up and evolves after the change of polarity.
Finally, we plot the solution at the end time $T=1$ in \cref{fig:cap3D}.
 
\section{Conclusions}

In this work we provided a refined a~priori analysis for the 
semi-discretization of the non-symmetric FEM-BEM coupling
for a parabolic-elliptic interface problem.
Furthermore, the first a~priori analysis was worked out 
for the full discretization 
of this coupling type 
in terms of the energy norm
of the solution space.
We were able to show quasi-optimality results for both, 
the semi- and the full discretization, 
with a Galerkin method in 
space and a variant of the implicit Euler method in time. 
Then we utilized the piecewise linear ansatz function space 
and the piecewise constant ansatz function space to approximate
the interior problem and the exterior problem, respectively. 
This defines a classical non-symmetric FEM-BEM coupling approach with
first order convergence.
Note that this is the optimal convergence rate for these ansatz spaces in this norm. 
However, the optimal convergence rate in the $L^2$ norm,
which usually relies on a duality argument, still remains open. 
In case of a non-symmetric approach, adjoint regularity cannot be obtained as 
easy as in the symmetric case. 
Thus our analysis avoided using the elliptic projection and 
used the $L^2$-projection instead.
Numerical experiments confirmed the theoretical findings. In particular they show
that our method even converges on non-convex domains with less regular data. 

\bibliographystyle{alpha}
\bibliography{literature}  

\newcommand{\etalchar}[1]{$^{#1}$}
\begin{thebibliography}{AEF{\etalchar{+}}14}

\bibitem[AEF{\etalchar{+}}14]{HILBERT:2013-1}
M.~Aurada, M.~Ebner, M.~Feischl, S.~Ferraz-Leite, T.~F{\"u}hrer, P.~Goldenits,
  M.~Karkulik, M.~Mayr, and D.~Praetorius.
\newblock {HILBERT} --- a {MATLAB} implementation of adaptive {2D-BEM}.
\newblock {\em Numer. Algor.}, 67:1--32, 2014.

\bibitem[AFF{\etalchar{+}}13]{Aurada:2013-1}
M.~Aurada, M.~Feischl, T.~F{\"u}hrer, M.~Karkulik, J.~M. Melenk, and
  D.~Praetorius.
\newblock Classical {FEM-BEM} coupling methods: nonlinearities, well-posedness,
  and adaptivity.
\newblock {\em Comput. Mech.}, 51(4):399--419, 2013.

\bibitem[BPS01]{Bramble:2001}
J.~H. Bramble, J.~E. Pasciak, and O.~Steinbach.
\newblock On the stability of the {L}2-projection in {H}1.
\newblock {\em Mathematics of Computation}, 71:147--156, 2001.

\bibitem[BS08]{Brenner:2008-book}
S.~C. Brenner and L.~R. Scott.
\newblock {\em The Mathematical Theory of Finite Element Methods}.
\newblock Springer, 3rd edition, 2008.

\bibitem[BY14]{Bank:2014}
R.~E. Bank and H.~Yserentant.
\newblock On the {$ H^1 $}-stability of the {$L^2$}-projection onto finite
  element spaces.
\newblock {\em Numer. Math.}, 126:361--381, 2014.

\bibitem[CES90]{Costabel:1990}
M.~Costabel, V.~J. Ervin, and E.~P. Stephan.
\newblock Symmetric coupling of finite elements and boundary elements for a
  parabolic-elliptic interface problem.
\newblock {\em Quarterly of Applied Mathematics}, 48:265--279, 1990.

\bibitem[CF99]{Carstensen:1999}
C.~Carstensen and S.~A. Funken.
\newblock Coupling of nonconforming finite elements and boundary elements {I}:
  A priori estimates.
\newblock {\em Computing}, 62:229--241, 1999.

\bibitem[Cia78]{Ciarlet:1978-book}
P.~G. Ciarlet.
\newblock {\em The finite element method for elliptic problems}.
\newblock Studies in mathematics and its applications. North-Holland,
  Amsterdam, New-York, 1978.

\bibitem[Cos88a]{Costabel:1988-1}
M.~Costabel.
\newblock Boundary integral operators on {L}ipschitz domains: elementary
  results.
\newblock {\em SIAM J. Math. Anal.}, 19:613--626, 1988.

\bibitem[Cos88b]{Costabel:1988-2}
M.~Costabel.
\newblock A symmetric method for the coupling of finite elements and boundary
  elements.
\newblock In {\em The mathematics of finite elements and applications, {VI}
  ({U}xbridge, 1987)}, pages 281--288. Academic Press, London, 1988.

\bibitem[DL92]{Dautray:1992-5}
R.~Dautray and J.-L. Lions.
\newblock {\em Mathematical analysis and numerical methods for science and
  technology. {V}ol. 5}.
\newblock Springer-Verlag, Berlin, 1992.

\bibitem[EOS17]{Erath:2017-1}
C.~Erath, G.~Of, and F.-J. Sayas.
\newblock A non-symmetric coupling of the finite volume method and the boundary
  element method.
\newblock {\em Numer. Math.}, 135:895--922, 2017.

\bibitem[Era10]{Erath:2010-phd}
C.~Erath.
\newblock {\em {Coupling of the Finite Volume Method and the Boundary Element
  Method - Theory, Analysis, and Numerics}}.
\newblock PhD thesis, University of Ulm, 2010.

\bibitem[ES17]{ErathSchorr:2017-2}
C.~Erath and R~Schorr.
\newblock A simple boundary approximation for the non-symmetric coupling of
  finite element method and boundary element method for parabolic-elliptic
  interface problems.
\newblock {\em Preprint, to appear in ENUMATH}, 2017.

\bibitem[Eva10]{Evans:2010-book}
L.~C. Evans.
\newblock {\em Partial Differential Equations}.
\newblock Graduate studies in mathematics. American Mathematical Society, 2010.

\bibitem[Gon06]{Gonzalez:2006}
M.~Gonz\'{a}lez.
\newblock Fully discrete fem-bem method for a class of exterior nonlinear
  parabolic-elliptic problems in 2d.
\newblock {\em Appl. Numer. Math.}, 56(10):1340--1355, 2006.

\bibitem[JN80]{Johnson:1980-1}
C.~Johnson and J.~C. N{\'e}d{\'e}lec.
\newblock {On the coupling of boundary integral and finite element methods}.
\newblock {\em Math. Comput.}, 35:1063--1079, 1980.

\bibitem[McL00]{McLean:2000-book}
W.~McLean.
\newblock {\em Strongly Elliptic Systems and Boundary Inte gral Equations}.
\newblock Cambridge University Press, 2000.

\bibitem[MS87]{MacCamy:1987}
R.~C. MacCamy and M.~Suri.
\newblock A time-dependent interface problem for two-dimensional eddy currents.
\newblock {\em Quart. Appl. Math.}, 44:675--690, 1987.

\bibitem[Say09]{Sayas:2009-1}
F.-J. Sayas.
\newblock {The validity of Johnson-N\'ed\'elec's BEM-FEM coupling on polygonal
  interfaces}.
\newblock {\em SIAM J. Numer. Anal.}, 47:3451--3463, 2009.

\bibitem[Ste08]{Steinbach:2008-book}
O.~Steinbach.
\newblock {\em Numerical Approximation Methods for Elliptic Boundary Value
  Problems: Finite and Boundary Elements}.
\newblock Texts in applied mathematics. Springer, New York, 2008.

\bibitem[Ste11]{Steinbach:2011}
O.~Steinbach.
\newblock A note on the stable one-equation coupling of finite and boundary
  elements.
\newblock {\em SIAM J. Numer. Anal.}, 49:1521--1531, 2011.

\bibitem[Tan14]{Tantardini:2014-1}
F.~Tantardini.
\newblock {\em Quasi-Optimality in the Backward Euler-Galerkin Method for
  Linear Parabolic Problems}.
\newblock PhD thesis, Universit{\`a} degli Studi di Milano, Milan, Italy, 2014.

\bibitem[Var71]{Varga:1971-book}
R.~S. Varga.
\newblock {\em Functional Analysis and Approximation Theory in Numerical
  Analysis}.
\newblock CBMS-NSF Regional Conference Series in Applied Mathematics. SIAM,
  Philadelphia, 1971.

\bibitem[Whe73]{Wheeler:1973}
M.~F. Wheeler.
\newblock A priori {L}2 error estimates for {G}alerkin approximations to
  parabolic partial differential equations.
\newblock {\em SIAM J. Numer. Anal.}, 10:723--759, 1973.

\end{thebibliography}

\end{document}